\theoremstyle{definition}
\newtheorem{thm}{Theorem}[section]
\newtheorem{lem}[thm]{Lemma}
\newtheorem{cor}[thm]{Corollary}
\newtheorem{prop}[thm]{Proposition}
\newtheorem*{thm*}{Theorem}
\theoremstyle{remark}
\newtheorem{rem}{Remark}[section]
\newtheorem*{acknowledgment}{Acknowledgments}
\numberwithin{equation}{section}\numberwithin{figure}{section}
\def\co{\colon\thinspace}
\providecommand{\leftsquigarrow}{%
  \mathrel{\mathpalette\reflect@squig\relax}%
}
\newcommand{\reflect@squig}[2]{%
  \reflectbox{$\m@th#1\rightsquigarrow$}%
}
\title{On Matveev-Piergallini moves for branched spines}
\author{Kohei Muramatsu}%
\address{Mizuho Research \& Technologies, Ltd.,
5-16-6, Hakusan, Bunkyo-ku, Tokyo 112-0001, Japan}
\email{kohei.muramatsu@mizuho-rt.co.jp}
\author{Sakie Suzuki}%
\address{Department of Mathematical and Computing Science, School of Computing,
Institute of Science Tokyo,
2-12-1, Ookayama, Meguro-ku, Tokyo 152-8552, Japan}
\email{sakie@c.titech.ac.jp}
\author{Koki Taguchi}%
\address{Department of Mathematical and Computing Science, School of Computing,
Institute of Science Tokyo,
2-12-1, Ookayama, Meguro-ku, Tokyo 152-8552, Japan}
\begin{document}

\maketitle

\begin{abstract}
The Matveev-Piergallini (MP) moves on spines of $3$-manifolds are well-known for their correspondence to the Pachner $2$-$3$ moves in dual ideal triangulations. Benedetti and Petronio introduced combinatorial descriptions of closed $3$-manifolds and combed $3$-manifolds by using branched spines and their equivalence relations, which involve MP moves with 16 distinct patterns of branchings. In this paper, we demonstrate that these 16 MP moves on branched spines are derived from a primary MP move,  pure sliding moves, and their inverses. {Consequently, we obtain simpler combinatorial descriptions for closed $3$-manifolds and combed $3$-manifolds.}  Furthermore, we extend these results to framed $3$-manifolds and  spin $3$-manifolds.
These descriptions are advantageous, particularly when constructing and studying quantum invariants of links and $3$-manifolds.  In various constructions of quantum invariants using (ideal) triangulations, branching structures naturally arise to facilitate the assignment of non-symmetric algebraic objects to tetrahedra. 
In these frameworks, the primary MP move precisely corresponds to certain algebraic pentagon relations, such as the pentagon relation of the canonical element of a Heisenberg double, the Biedenharn-Elliott identity for quantum $6j$-symbols, or Schaeffer's identity for the Rogers dilogarithm and its non-commutative analog for Faddeev's quantum dilogarithm in quantum Teichm\"uller theory.
We expect our results to contribute to a better understanding of quantum invariants in the context of spines and ideal triangulations.
\end{abstract}

\tableofcontents

\section{Introduction}

\subsection{Primary MP move and pure sliding moves on normal o-graphs}
{In this paper, we assume that $3$-manifolds are connected, compact and oriented. 
We denote by $\mathcal{M}$ the set of closed $3$-manifolds up to the equivalence relation of orientation-preserving diffeomorphism.
A combing on a closed $3$-manifold is a homotopy class of nowhere-vanishing vector fields.
A combed $3$-manifold is a pair $(M,v)$, where $M$ is a closed $3$-manifold and $v$ is a combing on $M$, and we denote by $\mathcal{M}_{\text{comb}}$ the set of combed $3$-manifolds up to the natural action of orientation-preserving diffeomorphisms.}

A normal o-graph is a decorated $4$-valent graph which represents a branched spine of a  $3$-manifold, and {it can be described} by an oriented virtual link diagram. {The dual of a branched spine corresponds to a branched ideal triangulation, that is, an ideal triangulation in which the $4$ ideal vertices of each ideal tetrahedron are ordered so that when two tetrahedra share a common face the orders are compatible.} 

A closed normal o-graph is a normal o-graph which represents a branched spine of a closed $3$-manifold (see Section \ref{Closed normal o-graph}).
Benedetti and Petronio \cite{BP} established combinatorial {descriptions} of $\mathcal{M}$ and $\mathcal{M}_{\text{comb}}$  by using closed normal o-graphs and their equivalent relations involving 16 patterns of the Matveev-Piergallini (MP) moves {shown} in Figure \ref{fig:MP}.

\begin{figure}[H]
    \centering
    \includegraphics[scale=0.5]{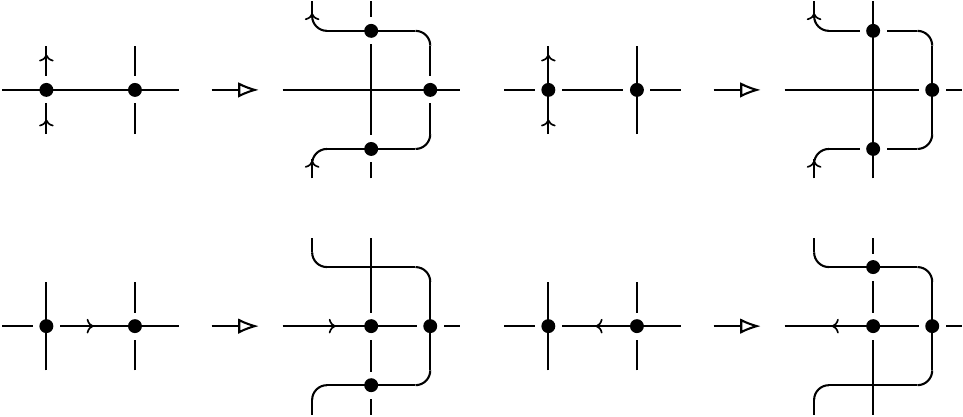}
    \caption{MP-moves. Orientation of each non-oriented edge is arbitrary if it matches before and after  move.}
    \label{fig:MP}
\end{figure}

{
We introduce the \textit{primary MP move} as in Figure \ref{fig:pMP}, which is equivalent to a specific MP move of type D2 (see {Figures} \ref{The MP moves of type D} and \ref{D2 and primary MP}). 
{It resembles the Reidemeister III move on link diagrams, with the key difference being that the diagram on the left-hand side includes two real crossings and one virtual crossing, while all three crossings in the diagram on the right-hand side are real.}  The primary MP move can be seen as a relation in the set of morphisms in the \textit{category of normal o-tangles} \cite{MST1}, similarly to the Reidemeister moves in the  category of tangles, generated as a monoidal category by fundamental diagrams (crossings, maxima and minima).}

\begin{figure}[H]
    \centering
    \includegraphics[scale=0.7]{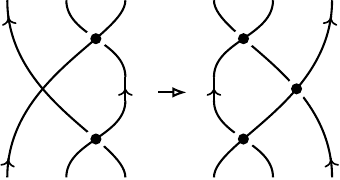}
    \caption{Primary MP move.}
    \label{fig:pMP}
\end{figure}
{Consider the boundary surfaces of the branched ideal  triangulations corresponding to the branched spines in the primary MP move. The bottom and top part of these surfaces are branched triangulated pentagons and the primary MP move induces a specific pentagon identity involving flip transformations, see Figure \ref{fig:Pentagon}.}
{This pentagon identity on branched triangulated surfaces plays a foundational role in the theory of quantum $6j$-symbols \cite{KR,TV} and quantum Teichm\"uller theory \cite{K, CF, AK}, as well as in the framework of quantum cluster algebras \cite{FG, FG1}. }

\begin{figure}[H]
    \centering
    \includegraphics[scale=0.45]{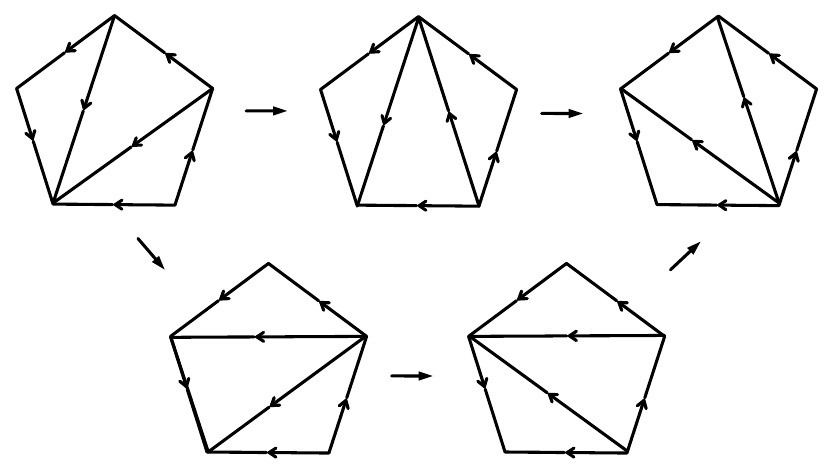}
    \caption{Pentagon identity of branched triangulations.}
    \label{fig:Pentagon}
\end{figure}
\noindent{The primary MP move was introduced in \cite{S} as a \textit{colored Pachner (2,3) move}, which was employed to study the universal quantum invariant of links \cite{Oh,Law} using ideal triangulations of link complements.
The MP move of type D2 {emerged} also in \cite{BS2} as a \textit{remarkable non ambiguous $b$-transit}\footnote{The positive crossing of a normal o-graph corresponds, in duality, to a negatively oriented branched tetrahedron. Thus, the primary MP move is,  in the terminology of \cite{BS2}, the remarkable non ambiguous $b$-transits involving only negatively $b$-oriented tetrahedra.}, utilized as a generator of a combinatorial description of the \textit{non ambiguous structure} of  $3$-manifolds.}

The \textit{pure sliding moves on branched spines} are defined in \cite{BP}. We translate this definition into the context of diagrams of normal o-graphs and define the \textit{pure sliding moves on normal o-graphs}, as shown in Figure~\ref{fig:RII}, which are nothing but the Reidemeister II moves on virtual link diagrams. These moves can only be performed when a specific global condition is satisfied on the normal o-graphs  (see Section \ref{Section;CPS}). This condition corresponds to a requirement on the $2$-cells of the branched spines (see Section~\ref{PS on BS}).

\begin{figure}[H]
    \centering
    \includegraphics[scale=0.8]{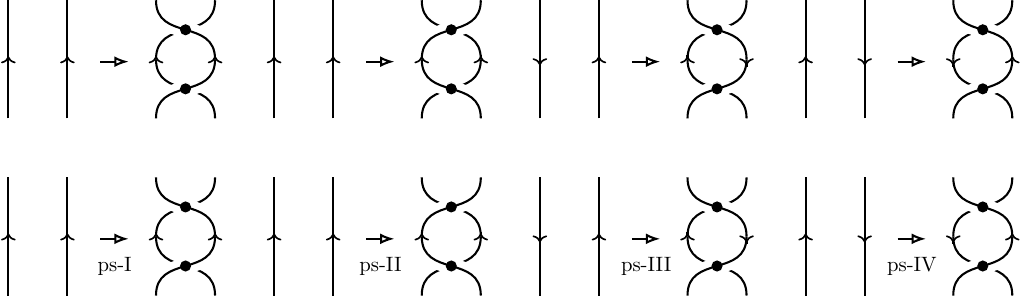}
    \caption{Pure sliding moves.}
    \label{fig:RII}
\end{figure}

\subsection{Main results}
In this paper we show that each MP move in Figure \ref{fig:MP} is derived  as  {a sequence of the primary  MP move}, the pure sliding moves, and  their inverses (Theorem \ref{th1}). 
Consequently, we have  alternative descriptions for $\mathcal{M}$ and $\mathcal{M}_{\text{comb}}$.

\begin{thm*}[Corollary  \ref{cocomb}]
There is a one-to-one correspondence between $\mathcal{M}_{\text{comb}}$ and the set of closed normal o-graphs, up to the primary MP move and pure sliding moves.
\end{thm*}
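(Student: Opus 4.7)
The plan is to deduce the corollary as a direct consequence of the Benedetti--Petronio combinatorial description together with Theorem~\ref{th1}, rather than to re-prove the bijection from scratch. First I would recall the result of \cite{BP} in our notation: the set of closed normal o-graphs modulo the equivalence relation generated by the 16 MP moves of Figure~\ref{fig:MP} together with the pure sliding moves of Figure~\ref{fig:RII} is in natural one-to-one correspondence with $\mathcal{M}_{\text{comb}}$. The correspondence is the one that sends a closed normal o-graph to the pair consisting of the closed $3$-manifold reconstructed from its dual branched ideal triangulation and the combing induced by the branching.

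The reduction then amounts to showing that the equivalence relation generated by the primary MP move and the pure sliding moves (and their inverses) coincides with the Benedetti--Petronio equivalence relation on closed normal o-graphs. The inclusion from the smaller relation into the larger one is immediate, since the primary MP move is itself an instance of a type D2 MP move (see Figure~\ref{D2 and primary MP}) and the pure sliding moves are already part of Benedetti--Petronio's generating set. The converse inclusion is precisely the content of Theorem~\ref{th1}: each of the 16 MP moves of Figure~\ref{fig:MP} is realized as a finite sequence of primary MP moves, pure sliding moves, and their inverses.

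Once these two inclusions are in place, the equivalence relations agree, hence the quotient sets agree, and the Benedetti--Petronio bijection descends to the claimed one-to-one correspondence between $\mathcal{M}_{\text{comb}}$ and closed normal o-graphs modulo the primary MP move and pure sliding moves. The only point that requires a moment of care is to verify that each intermediate diagram appearing in the derivations of Theorem~\ref{th1} is still a closed normal o-graph (i.e.\ still represents a branched spine of a closed $3$-manifold), so that the equivalence takes place within the correct set; this is automatic because the primary MP move and the pure sliding moves are defined only between closed normal o-graphs. The genuine work lies in Theorem~\ref{th1}, and after that the corollary is essentially formal.
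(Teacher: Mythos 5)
Your proposal is correct and follows essentially the same route as the paper: Corollary \ref{cocomb} is obtained by combining Proposition \ref{BPcomb} (the Benedetti--Petronio description via the $0$-$2$ move and the 16 MP moves, where the $0$-$2$ move is itself a pure sliding move and all pure sliding moves lie in the Benedetti--Petronio equivalence relation) with Theorem \ref{th1}, exactly as you describe. The only minor imprecision is that Benedetti--Petronio's generating set is stated with the $0$-$2$ move rather than all pure sliding moves, but this does not affect the argument.
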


Corollary \ref{cocomb} {is} equivalent to the results in \cite[Theorem 2.3]{Ishii} by I. Ishii, where he {uses} flow spines to represent combed $3$-manifolds. His proof contains {a} geometrical argument involving non-singular flows and local sections in $3$-manifolds, while our proof reduces to combinatorial arguments on normal o-graphs (see Remark \ref{flow}).  
We can generalize Corollary \ref{cocomb}   to describe  $3$-manifolds (possibly with  non-empty boundary) with {concave vector fields \cite{BP2}}  by  using normal o-graphs  (not necessarily closed).

The CP (Combinatorial Pontryagin) move on closed normal o-graphs, introduced in \cite{BP}, is a move that relates different combings in the same 3-manifold.
The CP move allows us to eliminate the combing structure and obtain a combinatorial description of closed $3$-manifolds as follows.
\begin{thm*}[Corollary \ref{cotop}]
There is  a one-to-one correspondence between  $\mathcal{M}$  and the set of closed normal o-graphs, up to the primary MP move, pure sliding moves, and the CP move.
\end{thm*}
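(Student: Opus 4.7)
The plan is to combine Theorem \ref{th1} with the original Benedetti--Petronio combinatorial description of $\mathcal{M}$. By \cite{BP}, closed normal o-graphs modulo the equivalence relation $\sim_{\mathrm{BP}}$ generated by the $16$ MP moves of Figure \ref{fig:MP}, the pure sliding moves, and the CP move are in natural bijection with $\mathcal{M}$. The CP move, by its very definition in \cite{BP}, is the diagrammatic move that realizes a change of combing on a fixed closed branched spine, so its addition to the move list corresponds on the manifold side to forgetting the combing structure.

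I would then apply Theorem \ref{th1}: each of the $16$ MP moves decomposes as a finite sequence of primary MP moves, pure sliding moves, and their inverses. Consequently, the equivalence relation $\sim_{\mathrm{BP}}$ on closed normal o-graphs coincides with the equivalence relation $\sim$ generated by just the primary MP move, the pure sliding moves, and the CP move. Substituting this smaller generating set into the Benedetti--Petronio bijection yields Corollary \ref{cotop} directly.

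An alternative but essentially identical route is to start from Corollary \ref{cocomb}, which provides the bijection
\[
\mathcal{M}_{\text{comb}} \;\longleftrightarrow\; \{\text{closed normal o-graphs}\}/\langle \text{primary MP, pure sliding}\rangle,
\]
and then quotient both sides by the CP move. On the left, the forgetful map $\mathcal{M}_{\text{comb}}\to \mathcal{M}$ is surjective (every closed oriented $3$-manifold is parallelizable), and its fibers are the orbits of change of combing, which are generated on the diagram side by the CP move thanks to \cite{BP}.

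I do not anticipate a serious obstacle. The entire content of the corollary is packaged inside the Benedetti--Petronio theorem together with Theorem \ref{th1}; the only care needed is to note that augmenting a presentation of an equivalence relation by an additional generator (the CP move) does not interfere with Theorem \ref{th1}, which concerns only the mutual derivation of the $16$ MP moves from the primary MP move and pure sliding. Once this observation is made, the corollary follows by direct substitution of generators.
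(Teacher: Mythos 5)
Your proposal is correct and follows essentially the same route as the paper: Proposition \ref{BPclosed} (Benedetti--Petronio) gives the equivalence relation for $\Phi$ as generated by the $0$-$2$ move, the $16$ MP moves, and the CP move; since the $0$-$2$ move is a pure sliding move and Theorem \ref{th1} lets one replace the $16$ MP moves by the primary MP move together with pure sliding moves (the reverse containment being clear because the primary MP move is D2 and pure sliding moves preserve the combed manifold), the generating set substitutes directly. The paper's own justification is exactly this one-line substitution of generators, so no further comment is needed.
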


We can compare this result with a result by  F. Costantino \cite{C}, where he gave an alternative description of  $3$-manifolds using branched spines, employing bubble moves and bumping moves, which do not necessarily preserve the combings.

A framed $3$-manifold {is} a closed $3$-manifold with a framing, {defined as a homotopy class of a trivialization of the tangent bundle.}
Benedetti-Petronio \cite{BP} also gave combinatorial {descriptions} of framed $3$-manifolds and spin $3$-manifolds by using framed normal o-graphs, which are normal o-graphs having a $\mathbb{Z}_2$-weight on each edge (see Section \ref{sec:BP-diagrams}).
{In  \cite{MST2}, S. Mihalache, Y. Terashima and the second author gave a description of  framed $3$-manifolds (with vanishing first Betti number) using integral normal o-graphs, which, for closed $3$-manifolds, are integer lifts of framed normal o-graphs.}
In the present paper, we introduce pure sliding moves on integral normal o-graphs and show that these moves do not alter the framing of {the associated} $3$-manifolds (Proposition \ref{IPS move and framing}). Subsequently, we establish results similar to Theorem \ref{th1} for both integral normal o-graphs (Theorem \ref{thZ}) and framed normal o-graphs (Corollary \ref{thZ2}). As a consequence, we provide {simpler combinatorial descriptions for} framed $3$-manifolds (Corollary \ref{cofram}) and spin $3$-manifolds (Corollary \ref{cospin}).

\subsection{Primary MP move and five term relations arising from quantum invariants}
{In various constructions of quantum invariants using (ideal) triangulations, branching structures naturally arise to facilitate the assignment of non-symmetric algebraic objects to tetrahedra.
The primary MP move can be viewed as a topological realization of certain algebraic pentagon relations arising from such quantum invariants. Kashaev \cite{Kashaev} showed that a canonical element $S$ of the Heisenberg double of a finite dimensional Hopf algebra satisfies the \textit{pentagon relation} $S_{23}S_{12}=S_{12}S_{13}S_{23}$. The quantum invariant defined in \cite{S, MST1, MST2} by Mihalache, Terashima, and the second author is constructed based on associating a branched tetrahedron with the canonical element $S$ of a Heisenberg double. This construction is functorial and it maps a real positive crossing of normal o-graphs to the canonical element $S$, and the primary MP move to the pentagon relation $S_{23}S_{12}=S_{12}S_{13}S_{23}$. Through the theory of quantum $6j$-symbols, the primary MP move corresponds to the  \textit{Biedenharn-Elliott identity}. This framework yields in particular the reduced Turaev-Viro invariant \cite{BS2}, which, after symmetrization, recovers the Turaev-Viro invariant \cite{TV}.
Faddeev's quantum dilogarithm \cite{F}  plays a crucial role in the theory of quantum Teichm\"uller theory and of complex Chern-Simons theory. This quantum dilogarithm appears in the canonical element of the Heisenberg double of the Borel subalgebra of $U_{q}(sl_2)$.
The pentagon relation of Faddeev's quantum dilogarithm, which is a non-commutative analog of \textit{Schaeffer's identity} for the classical Rogers dilogarithm, corresponds to 
 the sequence of flip {transformations of branched surfaces} as {shown} in Figure \ref{fig:Pentagon}.
The quantum hyperbolic invariant \cite{BS0, BS1, BS2} is similarly defined based on the Heisenberg double, specifically using the 6j-symbols for the cyclic representation theory of the Borel subalgebra of {$U_{q}(sl_2)$ at a root of unity}.  In this context, the action of the canonical element on a cyclic irreducible representation is referred to as the \textit{basic matrix dilogarithm}, while the pentagon relation is termed the \textit{matrix Schaeffer's identity}. }

\subsection{Quantum invariant $Z$ for framed $3$-manifolds and cyclic moves}

The quantum invariant $Z$ constructed in \cite{MST2} using the Heisenberg double of a Hopf algebra $H$ is an invariant of framed $3$-manifolds with vanishing first Betti number.  When  $H$ is involutory, the invariant aligns with that in \cite{S}, reducing to {a} combed $3$-manifolds invariant. Additionally, if $H$ is unimodular and counimodular, the invariant simplifies to a topological $3$-manifolds invariant constructed in \cite{MST1}.
The proof of invariance  of $Z$ {requires} showing consistency under the 16 MP moves. By employing the primary MP move and the {pure sliding moves} instead of the 16 MP moves, the proof becomes more straightforward.

The obstruction of $Z$ to be topological invariant, or, in other words, the aspect related to framing, lies in its invariance under ``cyclic" moves. These moves involve a region surrounded by edges oriented cyclically, see Figure \ref{fig:cMP} for examples.\footnote{In \cite{MST2}, we classify MP moves of type A and type B, and type B corresponds the set of cyclic MP moves.} 
\begin{figure}[H]
    \centering
    \includegraphics[scale=0.8]{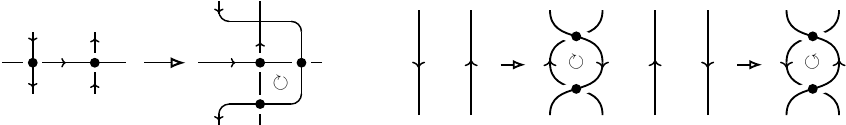} 
        \caption{Cyclic MP move and cyclic pure sliding moves.}
    \label{fig:cMP}
\end{figure}
\noindent When we use the primary MP move and the pure sliding moves, the obstruction is only the invariance under cyclic pure sliding moves. {More precisely, each non-cyclic MP move can be derived as {a sequence of the primary MP move} and non-cyclic pure sliding moves, whereas any decomposition of a cyclic MP move as a sequence of moves must include at least one cyclic pure sliding move.}
 {Thus,} we can say that the obstruction essentially resides in the cyclic pure sliding moves, {see} Appendix \ref{cyclic} for details.

\subsection{Moves on normal o-graphs and algebraic equations of quantum invariants}
One of the most interesting challenges in the study of quantum invariants is understanding the relationship between three-dimensional topology and the algebraic properties of these invariants.
In response to Atiyah's quest for an intrinsically three-dimensional definition of the Jones polynomial, Witten \cite{W} used Chern-Simons theory and three-dimensional topological quantum field theory, paving the way for a possible topological interpretation of the Jones polynomial. This approach also led him to propose new types of $3$-manifold invariants. Witten employed a path integral, a concept not rigorously defined in mathematics, yet his theory provided substantial insights. 
However,  a comprehensive understanding within a mathematically rigorous framework remains elusive, and the present paper might take a step towards addressing this gap {by establishing a three-dimensional framework for studying generalizations of the Jones polynomial. More precisely,}
the universal quantum invariant of links \cite{Oh,Law} {recovers} the Reshetkhin-Turaev invariants \cite{RT1} of links, {in particular, the colored Jones polynomials}. 
From the universal quantum invariant, we can {also} construct the Witten-Reshetkhin-Turaev invariant \cite{RT2} of $3$-manifolds.  By extending the results in \cite{S,  MST2}, in  \cite{S2}, the second author reconstructed the universal quantum invariant  for the Drinfeld double using integral normal o-graphs and  the invariant $Z$. In this sense, the invariant $Z$ might incorporate the Reshetkhin-Turaev invariant of links (as invariants of link complements with framings) and a framing refinement of the Witten-Reshetkhin-Turaev invariant.
{Our} {description} of $3$-manifolds using the primary MP move and the pure sliding moves aligns with the invariant $Z$, where the correspondence between topological phenomena and algebraic structure through the invariant is straightforward and comprehensible: {a} crossing of a normal o-graph (or an ideal tetrahedron in the dual ideal triangulation) corresponds to the canonical element; the primary MP move (or the Pachner $2$-$3$ move) embodies the pentagon relation of the canonical element, while the pure sliding moves (or the Pachner $0$-$2$ moves) reflect its invertibility. {In this harmonious correlation, understanding the moves of normal o-graphs implies understanding the corresponding algebraic equations of the invariants.} We {expect} our results to contribute to a better understanding of quantum invariants in the context of spines and ideal triangulations.

\subsection{Organization of this paper}
The paper is organized as follows. 
We start by discussing the {description} of combed $3$-manifolds and closed $3$-manifolds in Section \ref{Section_normal_o_graph}.
Here, we revisit the definition of branched spines in Section \ref{Branched spine} and explain a one-to-one correspondence between branched spines and normal o-graphs in Section \ref{Normal o-graph}. In Section \ref{Closed normal o-graph}, we recall the combinatorial {description} of combed $3$-manifolds and closed $3$-manifolds using closed normal o-graphs. 
We {then proceed with} the study of the pure sliding moves, initially we recall the original definition from \cite{BP} on branched spines in Section \ref{PS on BS}, and subsequently we introduce its combinatorial realization on normal o-graphs in Section \ref{Section;CPS}.
Our main results, outlined in Section \ref{Section;Main_results}, are detailed, with corresponding proofs provided in Section \ref{Section;proof}. 
The discussion extends to local pure sliding moves in Section \ref{localPS}.
{We then focus on framed $3$-manifolds and integral normal o-graphs in Section  \ref{Section;frame}.}
Here, {in Sections \ref {subsec: framing} and \ref{sec:BP-diagrams}, we recall the description of framed $3$-manifolds using branched spines and integral normal o-graphs.}
We then recall the combinatorial {description} of framed $3$-manifolds using framed normal o-graphs in Section \ref{Section;framed_o-graphs}.
In subsequent sections, including Sections \ref{Eulercochain} and  \ref{Section;IPS}, we introduce integral pure sliding moves and investigate their impact on framings. 
Our main results regarding framed $3$-manifolds are presented in Section \ref{Section;Main_results_framing}, with a further refinement provided in Section \ref{integrallocalPS}.
In Section \ref{Section;spin}, we give the results for spin normal o-graphs and spin $3$-manifolds.
The appendices provide further analyses, including an investigation into the symmetries of the MP moves in Appendix \ref{Section;symmetry}, and a study of cyclic moves in Appendix~\ref{cyclic}.

\begin{acknowledgment}
{We would like to thank Yuya Koda and Serban Matei Mihalache for valuable discussions.
We also thank Anderson Vera for his helpful comments on earlier drafts.
This work was partially supported by JSPS KAKENHI Grant Number JP19K14523.}
\end{acknowledgment}

 \section{Normal o-graphs and combed $3$-manifolds} \label{Section_normal_o_graph}
In Sections \ref{Branched spine}--\ref{Closed normal o-graph}, we follow the notation in \cite{MST2}.

\subsection{Branched spines and associated vector fields}\label{Branched spine}

\label{subsec:Branched spine}
For an introduction to standard and branched spines, see e.g. \cites{BP,Mat}\footnote{In \cite{Mat}, standard spine is called special spine.}.
A 2-dimensional compact polyhedron $P$ is called \textit{{simple}} if the neighborhood of each point $x\in P$ is homeomorphic to one of the pictures in Figure \ref{fig:nbh of simple spine}, where from the left in the picture the point $x$ is called a \textit{{non-singular point}}, a \textit{{triple point}}, and a \textit{{true vertex}}, respectively.

\begin{figure}[H]
    \centering
    \includegraphics[scale=0.9]{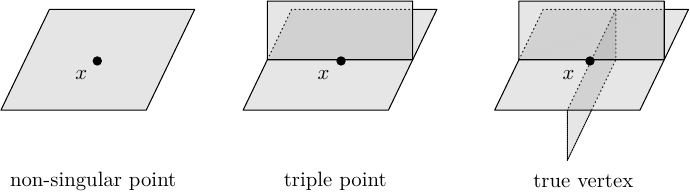}
    \if0
    \begin{picture}(0,0)
    \put(-120,52){$x$ }
    \put(-15,52){$x$ }
    \put(95,52){$x$ }
    \put(-150,0){non-singular point}
    \put(-30,0){triple point}
    \put(75,0){true vertex}
    \end{picture}
    \fi
    \caption{Neighborhood of point in $P$.}
    \label{fig:nbh of simple spine}
\end{figure} 

\noindent For a simple polyhedron $P$, set
\begin{align*}
  V(P)&=\{x\in P\mid \text{$x$ is a true vertex}\},\\
  S(P)&=\{x\in P\mid \text{$x$ is a true vertex or a triple point}\},\\
  D(P)&=\{x\in P\mid \text{$x$ is a non-singular point}\}.
\end{align*}

A simple polyhedron $P$ is called \textit{{standard}} if connected components of $S(P)\backslash V(P)$ and $D(P)$ are 1-cells and 2-cells, respectively.
Let $M$ be a $3$-manifold with non-empty boundary.
A standard polyhedron $P$ embedded in $\text{Int}M$ is called a \textit{{standard spine}} of $M$ if $M$ collapses to $P$.
It is known that every compact $3$-manifold with non-empty boundary admits a standard spine.
A standard spine $P$ of $M$ determines the homeomorphism class of $M$, i.e., if $P^{\prime}$ is a standard spine of $M^{\prime}$ which is homeomorphic to $P$, then $M^{\prime}$ is homeomorphic to $M$. 

An \textit{{oriented standard polyhedron}} is a standard polyhedron which is a spine of an oriented $3$-manifold \cite{BP}*{Proposition 2.1.2}.
An \textit{{oriented branching}} on an oriented standard polyhedron $P$ is an orientation on its $2$-cells such that on each $1$-cell, the orientations induced from the $2$-cells attached to it are not compatible, i.e., there are locally three $2$-cells which are attached to a $1$-cell $e$ and one of the three induced orientations on $e$ is opposite to the other two (cf. \cite{BP}*{Corollary 3.1.7}). 
We can visualize a branching structure on $P$ as a smoothing of $P$  as shown in Figure \ref{fig:branching}, where the ``branching" starts from the region which induces inverse orientation on the $1$-cell relative to the others.
Here, each 1-cell has a canonical orientation as the two compatible orientations induced by the $2$-cells attached to it. 
Up to orientation-preserving homeomorphism, there exist two possibilities for the branching structure near the 0-cell, the \textit{{type $+$}} and the \textit{{type $-$}},  which are shown in Figure \ref{fig:branched vertex}. 

\begin{figure}[H]
  \begin{minipage}[b]{0.38\linewidth}
    \centering
    \includegraphics[keepaspectratio, scale=0.3]{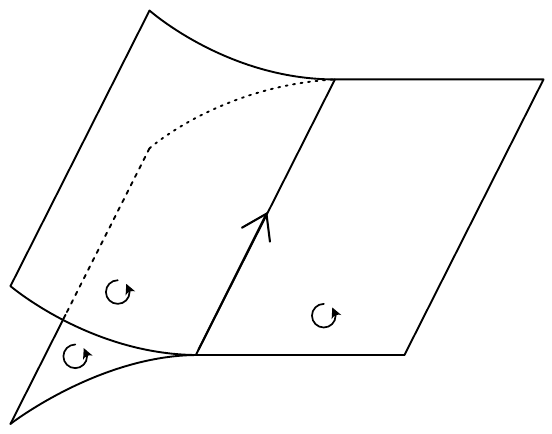}
    \caption{Branching.}
    \label{fig:branching}
  \end{minipage}
  \begin{minipage}[b]{0.6\linewidth}
    \centering
    \includegraphics[keepaspectratio, scale=0.35]{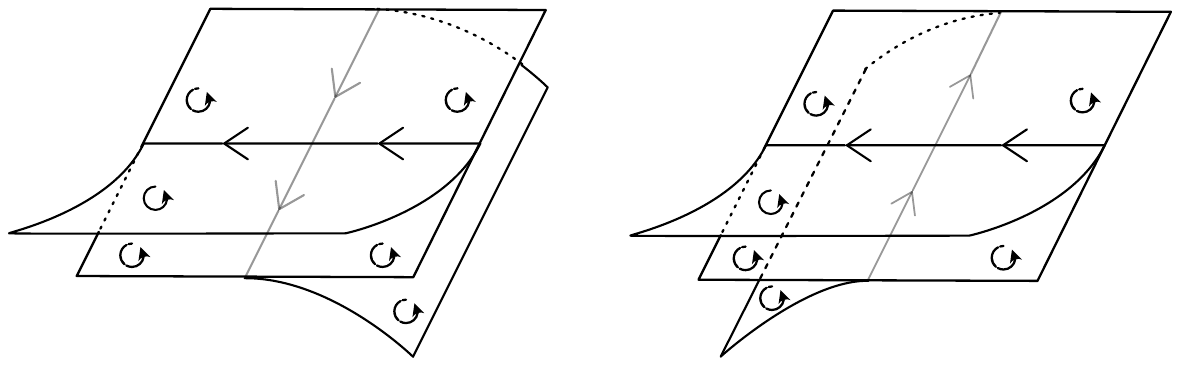}
    
    \begin{picture}(0,0)
    \put(-90,0){type $+$}
    \put(55,0){type $-$}
    \end{picture}
    \caption{Local branching near  true vertex.}\label{fig:branched vertex}
  \end{minipage}
\end{figure}

By abusing the terminology, we refer to an oriented standard polyhedron with an oriented branching as a \textit{{branched polyhedron}}.
Let $P$ be a branched polyhedron and $M(P)$ the $3$-manifold obtained by thickening $P$.
Then $P$ defines a unique nowhere-vanishing vector field $v(P)$ on $M(P)$, perpendicular to $P$ and following a right-hand screw direction as depicted in Figure \ref{fig:combin of branching}.
\begin{figure}[H]

  \centering
   \includegraphics[scale=0.9]{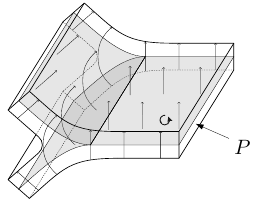}
   \if0 
      \begin{picture}(0,0)
   \put(55,35){$P$}
    \end{picture}
    \fi
   \caption{Vector field on $M(P)$.}
   \label{fig:combin of branching}

\end{figure}

\subsection{Normal o-graphs and branched polyhedrons}\label{Normal o-graph}
A \textit{normal o-graph} is an oriented virtual link diagram up to planer isotopy and Reidemeister type moves defined in Figure \ref{fig:RM}. 

\begin{figure}[H]
    \centering
    
    \includegraphics{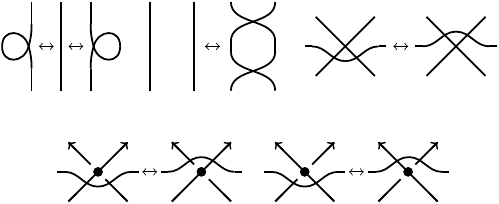}
    \caption{Reidemeister type moves.}
    \label{fig:RM}
\end{figure}

A normal o-graph  represents a branched polyhedron in a combinatorial manner.
For a branched polyhedron $P$, we replace each true vertex with a real crossing in $\mathbb{R}^2$; 
 we replace each vertex of type $+$ or $-$ to a positive or negative crossing, respectively, see Figure \ref{fig:BStoBP}.
\begin{figure}[H]
    \centering
    \includegraphics[scale=0.4]{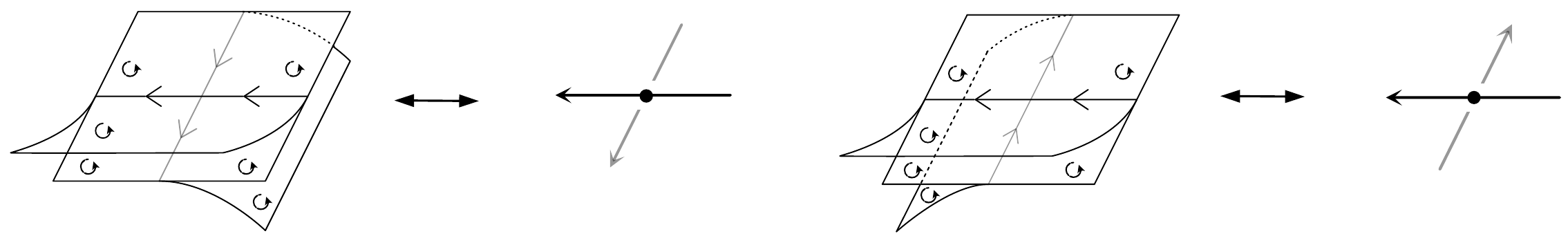}
    \caption{Correspondence between branched polyhedron and virtual link diagram near vertex of Type + (left) and type - (right).}
    \label{fig:BStoBP}
\end{figure}

\noindent Then we obtain a normal o-graph by connecting them according to how the true vertices are connected.  Note that this construction is unique up to planer isotopy and the Reidemeister type moves.

Conversely, we can construct in a natural way from an oriented virtual link diagram a homeomorphism class of a branched polyhedron. As a result, homeomorphism classes of branched polyhedra correspond one-to-one with normal o-graphs. In the following discussion, we occasionally treat branched polyhedra and normal o-graphs interchangeably.

\subsection{Closed normal o-graphs and combed $3$-manifolds}\label{Closed normal o-graph}
We recall from \cite{BP} the presentations of combed $3$-manifolds and closed $3$-manifolds.

Let $M$ be a closed  $3$-manifold.
A \textit{{combing}} $v$ of $M$ is a nowhere-vanishing vector filed over $M$. A combed $3$-manifold is a pair $(M,v)$ of a closed oriented $3$-manifold $M$ and its combing $v$.
Two combed $3$-manifolds $(M,v)$ and $(M^{\prime},v^{\prime})$ are \textit{{equivalent}} if there exists an orientation-preserving diffeomorphism $h\co M\to M^{\prime}$ such that $h_*v$ is homotopic through combings to $v^{\prime}$. We denote by $\mathcal{M}_{\text{comb}}$ the set of equivalence classes of combed $3$-manifolds.

Note that for a branched polyhedron $P$, the vector field $v(P)$ of $M(P)$ induces a stratification  $\partial M=\partial_{\text{out}} M\cup \partial_{\text{tan}} M\cup\partial_{\text{in}} M$ on the boundary of $M(P)$, where 
\begin{align*}
    \partial_{\text{out}} M &= \{x\in\partial M\,|\,\text{the field $v$ at $x$ points outward}\},\\
    \partial_{\text{tan}} M &= \{x\in\partial M\,|\,\text{the field $v$ at $x$ is tangent to }\partial M\},\\
    \partial_{\text{in}} M &= \{x\in\partial M\,|\,\text{the field $v$ at $x$ points inward}\},
\end{align*}
as shown in Figure \ref{fig:black white decomposition}. 
A \textit{{closed branched polyhedron}} $P$ is a branched polyhedron such that $\partial M(P)$ is diffeomorphic to the 2 dimensional sphere $S^2=\{(x,y,z)\in\mathbb{R}^3\,|\,x^2+y^2+z^2=1\}$ with the stratification given by $\partial_{\text{out}} S^2=\{z < 0\}$, $\partial_{\text{tan}} S^2=\{z = 0\}$, $\partial_{\text{in}} S^2=\{z > 0\}$.
For a closed branched polyhedron $P$, the associated vector field $v(P)$ on $M(P)$ extends to the combing $\widehat v(P)$ of closure $\widehat M(P)$ of $M(P)$ by capping the $S^2$ boundary with a ball having the trivial flow $(B^3,\frac{\partial}{\partial z})$. 

\begin{figure}[H]

    \centering
    \includegraphics[scale=1]{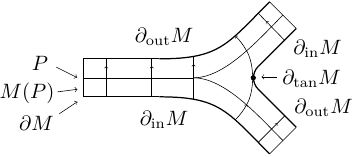}
    \caption{Stratification of boundary of $M(P)$.}
    \label{fig:black white decomposition}

\end{figure}

A \textit{closed normal o-graph} is a normal o-graph which represents a closed branched polyhedron. 
We denote by $\mathcal{G}$ the set of closed normal o-graphs. 
See Figure \ref{lens_space} for examples of closed normal o-graphs representing the Lens spaces $L(p,1)$ for $p\geq 0$, where the normal o-graph contains $p$ true vertices.
\begin{figure}[H]
  \centering
  \includegraphics{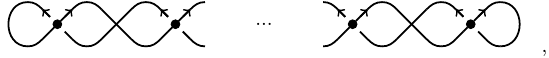}
  \caption{Normal o-graph representing Lens space $L(p,1)$. }
  \label{lens_space}
\end{figure}
The above construction defines a map
\begin{align*}
\Phi_{\rm comb} \co \mathcal{G} \to \mathcal{M}_{\rm comb}, \ \ \ \Gamma \mapsto (\widehat M(\Gamma), \widehat v(\Gamma)),
\end{align*}
which is in fact surjective \cite[Proposition 5.2.3]{BP}.

\begin{prop}[{Benedetti-Petronio \cite[Theorem 1.4.1]{BP}}]\label{BPcomb}
The equivalence relation defined by $\Phi_{\rm comb}$ is generated by the \textit{$0$-$2$ move} defined in Figure \ref{fig:02} and  the MP moves defined in Figure \ref{fig:MP}.  
\end{prop}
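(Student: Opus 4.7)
The plan is to prove the two directions separately: (i) \emph{soundness}, that both move types descend to well-defined maps on $\mathcal{M}_{\text{comb}}$, and (ii) \emph{completeness}, that any two closed normal o-graphs with the same image under $\Phi_{\text{comb}}$ are connected by a sequence of these moves. Together with the surjectivity of $\Phi_{\text{comb}}$ already recalled in the text, these two assertions yield the claim.

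For (i), each move is a local modification supported in a ball of the branched polyhedron, and one checks it move by move. Writing $P$ and $P'$ for the two branched polyhedra related by a single move, the thickenings $M(P)$ and $M(P')$ restrict to diffeomorphic $3$-balls outside of which the two polyhedra agree, and they match on the boundary of the modified region. Inside that region, the right-hand screw rule of Figure \ref{fig:combin of branching} produces two nowhere-vanishing vector fields with identical boundary behavior, and a direct inspection exhibits an explicit homotopy between them. Patching with the identity outside shows $(\widehat M(P),\widehat v(P))$ and $(\widehat M(P'),\widehat v(P'))$ represent the same class in $\mathcal{M}_{\text{comb}}$.

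For (ii), I would use a three-step reduction. First, by the classical Matveev-Piergallini calculus for standard spines, any two standard spines of a given closed $3$-manifold are related by a sequence of unbranched MP and lune $0$-$2$ moves. Second, lift this sequence to the branched setting by propagating the branchings along it, inserting auxiliary $0$-$2$ moves when an unbranched move does not admit a compatible branching on one side, so that each intermediate polyhedron carries a branching realizing a combing in the required homotopy class. Third, reconcile the combings: if two branched polyhedra have the same underlying standard spine but yield combings that are only homotopic (not equal), show that the difference can be canceled by a sequence of branched $0$-$2$ and MP moves; this uses that the homotopy class of a combing, read off from the normal o-graph, is captured by its combinatorial Pontryagin datum, which is invariant under the listed moves and realizes all permissible changes.

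The main obstacle I expect is the coupling of the last two steps: one must simultaneously track the branching data and the combing along a Matveev-Piergallini sequence, knowing that a local branching change can a priori alter the induced combing. The key technical lemma to isolate is the statement that any local branching change that preserves the combing up to homotopy can be realized by a finite sequence of branched $0$-$2$ and MP moves, the flexibility being provided by the auxiliary $2$-cells introduced by $0$-$2$ moves; once this lemma is established, assembling steps one through three into a full proof becomes routine.
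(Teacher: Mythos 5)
First, note that the paper offers no proof of this statement: it is imported verbatim as Theorem 1.4.1 of Benedetti--Petronio \cite{BP}, and the present paper's contribution begins only downstream of it. Your proposal is therefore being measured against the original argument in \cite{BP} rather than anything in this text. Your part (i) is fine in outline and matches the standard local verification.

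The genuine gap is in part (ii), concentrated in your third step. The ``key technical lemma'' you isolate --- that two branched polyhedra inducing homotopic combings can be joined by branched $0$-$2$ and MP moves --- is not a lemma to be deferred; it is essentially the whole theorem, restated for the special case of a fixed underlying spine. Worse, the reduction in your first two steps works against you: the unbranched Matveev--Piergallini sequence connecting the two underlying standard spines is produced with no reference to the combing, so the branchings you propagate along it will generically represent combings in the wrong homotopy class. The set of homotopy classes of combings on a fixed $M$ is acted on by $H^1(M;\mathbb{Z})$ together with a secondary Pontryagin-type invariant, and the $0$-$2$ and MP moves must preserve this class \emph{exactly} --- that is precisely what distinguishes this statement from Corollary \ref{cotop}, where the CP move is added to destroy the combing data. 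Your plan provides no mechanism for steering the intermediate branched spines back into the correct class at the end. The actual proof in \cite{BP} (and, in parallel, Ishii's flow-spine argument mentioned in Remark \ref{flow}) runs in the opposite direction: one realizes every branched spine of $(M,v)$ as a geometric object adapted to the flow of $v$ (a transversal/normal section), so that the combing is preserved by construction, and then shows that a generic homotopy between two such sections induces exactly the catalogued $0$-$2$ and MP moves. Without that geometric input, the completeness half of your plan does not close.
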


We denote by $\mathcal{M}$ the set of closed $3$-manifolds up to orientation-preserving diffeomorphism.
We have a surjective map
\begin{align*}
\Phi \co \mathcal{G} \to \mathcal{M}, \ \ \ \Gamma  \mapsto \widehat M(\Gamma).
\end{align*}

\begin{prop}[{Benedetti-Petronio \cite[Theorem 1.4.2]{BP}}]\label{BPclosed}

The equivalence relation defined by $\Phi$ is generated by the $0$-$2$ move, the MP moves, and the \textit{CP move} defined in \cite[Figure 1.6]{BP}\footnote{We will not show the figure of the CP move because we do not use it in this paper.}. 
\end{prop}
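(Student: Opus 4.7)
The plan is to bootstrap from Proposition \ref{BPcomb}. The map $\Phi$ factors as $F\circ\Phi_{\rm comb}$, where $F\co\mathcal{M}_{\rm comb}\to\mathcal{M}$ is the forgetful map that discards the combing. By Proposition \ref{BPcomb}, the kernel of $\Phi_{\rm comb}$ is already generated by the $0$-$2$ and MP moves, so it suffices to show that the additional relation coming from $F$, namely that $(M,v)$ and $(M,v')$ must be identified for any two combings $v,v'$ of $M$, is realized at the level of normal o-graphs by the CP move (modulo $0$-$2$ and MP moves).

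First I would verify that a single CP move lies in the kernel of $\Phi$. A local inspection of the picture in \cite[Figure 1.6]{BP} shows that the branched polyhedron is modified only inside a regular neighborhood whose spherical boundary stratification $\partial_{\rm out}\cup\partial_{\rm tan}\cup\partial_{\rm in}$ is preserved, so the closure $\widehat M(P)$ is unchanged; meanwhile the associated vector field $\widehat v(P)$ is altered on a $3$-ball in a controlled way, realizing a prescribed elementary shift of the combing class.

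Second and more substantially, one must show that CP moves suffice to connect any two combings of a fixed closed oriented $3$-manifold $M$. The natural tool is the Pontryagin--Thom correspondence, which identifies homotopy classes of nowhere-vanishing vector fields on $M$ with a torsor over $H^1(M;\mathbb{Z})$ together with a residual $\mathbb{Z}$-ambiguity when the primary obstruction vanishes. The CP move is constructed precisely so that each instance generates a specified element of this torsor. Performing CP moves on various branched spines of $M$ (switching spines by $0$-$2$ and MP moves when necessary to expose a site where a CP move is applicable), one argues that every element of the combing torsor can be hit.

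The main obstacle will be this second step: showing that the subgroup generated by CP moves exhausts the full combing torsor, rather than merely a subset. This requires an explicit obstruction-theoretic computation of how a single CP move affects the Euler structure and the $\mathbb{Z}$-level obstruction of $\widehat v(P)$, followed by a combinatorial argument guaranteeing that enough such moves are available on an arbitrary closed spine. Since the statement is quoted verbatim from Benedetti--Petronio, the proof here ultimately cites \cite[Theorem 1.4.2]{BP}, where this combing-tracking analysis on branched spines is carried out in full detail.
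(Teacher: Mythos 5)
The paper offers no proof of this proposition: it is stated as a direct quotation of \cite[Theorem 1.4.2]{BP}, and your proposal ultimately does the same, deferring the substantive combing-tracking analysis to that source. Your surrounding sketch---factoring $\Phi$ through $\Phi_{\rm comb}$, invoking Proposition \ref{BPcomb}, and reducing to the claim that CP moves act transitively on the set of combings of a fixed manifold---is a reasonable outline of how Benedetti--Petronio argue, but since both you and the paper rest the result on the citation, the approaches are essentially the same.
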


\begin{figure}[H]
    \centering
    \includegraphics[scale=0.8]{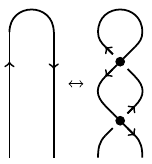}
    \caption{$0$-$2$ move. }
    \label{fig:02}
\end{figure}


\subsection{Pure sliding moves on branched polyhedrons}\label{PS on BS}
The \textit{pure sliding  moves} on branched  polyhedrons are introduced in \cite[Figure 4.7]{BP}, which is defined as in Figure \ref{fig:PS move}.\footnote{Note that  pure sliding moves I and II have the same configuration after rotation, but the orientations of $2$-cells do not match. In \cite[Figure 4.7]{BP}, both pure sliding moves I and II are depicted with a single picture, whereas we distinguish between them  by considering the orientations of $2$-cells.}
\begin{figure}[H]
        \centering
    \includegraphics[scale=0.9]{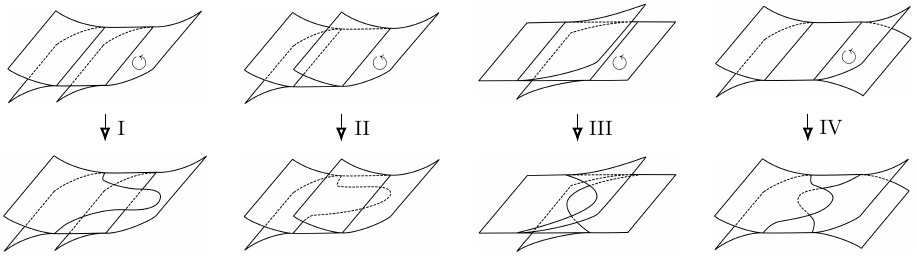}
    \caption{Pure sliding moves I--IV on branched polyhedrons.}
    \label{fig:PS move}

\end{figure}
It is not difficult to check that the $0$-$2$ move is a pure sliding move. 
We should emphasize that we cannot represent the pure sliding moves solely using a diagram of two edges of a normal o-graph, because such local diagram does not cover the global information about the arrangement of $2$-cells,  see Figure \ref{fig:PSng} for example. 

\begin{figure}[H]
        \centering
    \includegraphics[scale=0.9]{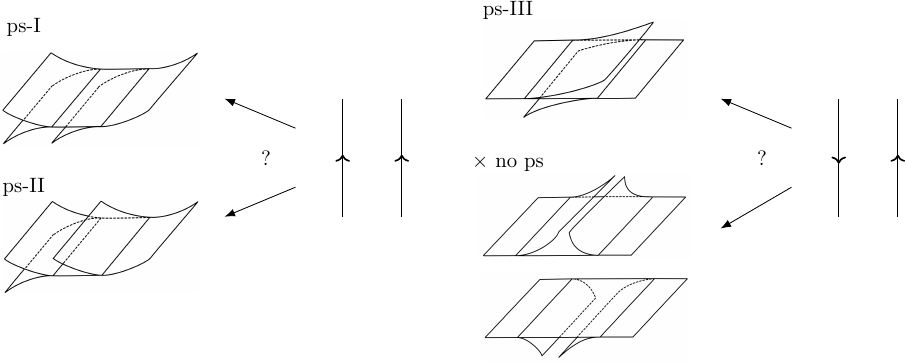}
    \caption{Possible configurations of branched polyhedra arising from local edges of normal o-graphs.}
    \label{fig:PSng}
\end{figure}

In the next section we will give a combinatorial method for performing the pure sliding moves on normal o-graphs.

\subsection{Pure sliding moves on normal o-graphs}\label{Section;CPS}
To implement the pure sliding moves on normal o-graphs, it is necessary to consider  $2$-cells within a branched polyhedron. Let $\Gamma$ be a normal o-graph and $P$ the corresponding branched polyhedron. Benedetti and Petronio (\cite[Figure 1.3]{BP}) introduced a procedure (Method A) to associate a union of circuits with a normal o-graph $\Gamma$, as illustrated in Figure \ref{fig:Method A}. These circuits represent the boundaries of $2$-cells and their attaching maps to the $1$-skeleton $S(P)$.
\begin{figure}[H]
    \centering
    \includegraphics[scale=0.8]{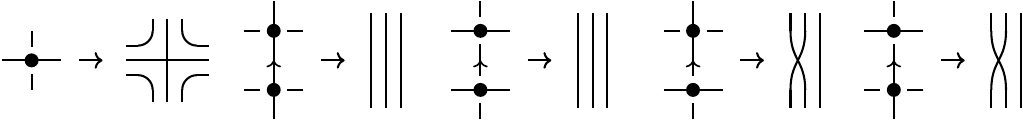}
    \caption{Method A.}
    \label{fig:Method A}
\end{figure}
We slightly modify their method as follows: we replace the neighborhood of each true vertex, as shown in Figure \ref{fig:Method B}, and connect them along edges using parallel curves (Method B). 
It is important to note that Method A and Method B convey the same information regarding $2$-cells and their attaching maps.
\begin{figure}[H]
    \centering
    \includegraphics[scale=0.9]{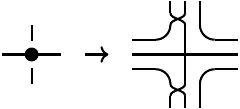}
    \caption{Method B.}
    \label{fig:Method B}
\end{figure}

An  advantage to use Method B is that the procedure is functorial\footnote{We can describe it as a functor between certain categories.}, where we replace neighborhoods of true vertices and edges independently,  while with Method A the circuit diagram around an edge depends on the two true vertices on the boundary of the edge. 
Another advantage, which is essential in the following argument, is that the following orders are consistent; 
\begin{itemize}
\item the order of three lines in the circuit diagram around an edge: we count from  left to right  seeing the edge of normal o-graph going upwards, and
\item the order of three (local) $2$-cells at the triple line: we count by the right-hand screw order which ends the $2$-cell starting the branching,
\end{itemize}
see Figure \ref{fig: corresponding 2-call and circuis}.
 Note that if there appears a crossing in the circuit diagram around an edge, as in Method A, then the order is broken at some point.      

\begin{figure}[H]
    \centering
    \includegraphics[scale=0.7]{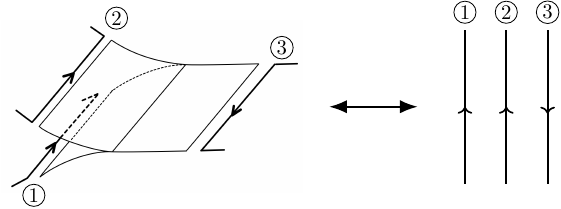}
    \caption{Correspondence between configuration of $2$-cells around triple line and lines in circuit diagram.}
    \label{fig: corresponding 2-call and circuis}
\end{figure}

Method B of circuit diagrams enables us to define the pure sliding moves combinatorially on normal o-graphs as follows.

A \textit{pure sliding move} on normal o-graph is defined as in Figure \ref{the pure sliding move}, where the normal o-graph before we perform each move ps-I -- ps-IV should respectively satisfy the following condition: on the circuit diagram by Method B, around the two edge where we perform the move, we have
\begin{itemize}
\item[\rm (PS-I)] the third line on the left is connected to the second line on the right,
\item[\rm (PS-II)] the third line on the left is connected to the first line on the right,
\item[\rm (PS-III)] the first line on the left is connected to the second line on the right,
\item[\rm (PS-IV)] the third line on the left is connected to the third line on the right,
\end{itemize}
see Figure \ref{the pure sliding move condi}.

\begin{figure}[H]
    \centering
    \includegraphics[scale=0.8]{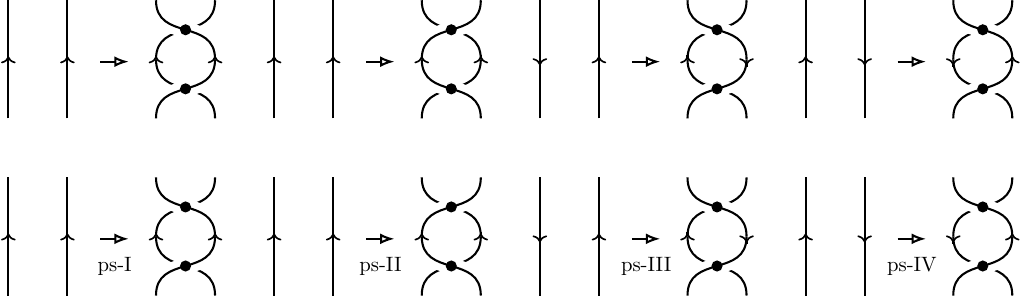}
    \caption{Pure sliding move ps-I -- ps-IV.}
    \label{the pure sliding move}
\end{figure}

\begin{figure}[H]
    \centering
    \includegraphics[scale=0.6]{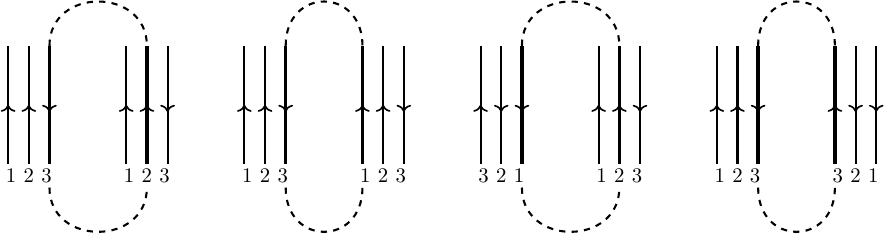}
    \caption{Conditions (PS-I)-- (PS-IV) for ps-I -- ps-IV, respectively.}
    \label{the pure sliding move condi}
\end{figure}

One can verify that each condition (PS-I)--(PS-IV) corresponds to the connection of the $2$-cells involved in the pure sliding moves I--IV on the branched spine. For example,  when we count the parts of the $2$-cells in branched polyhedron involved in the pure sliding move I as in the right in Figure \ref{PS1}, then the $\Delta_3$ is attached to the third strand on the left on the circuit diagram, and to the second on the right. 

\begin{figure}[H]
    \centering
    \includegraphics[scale=0.6]{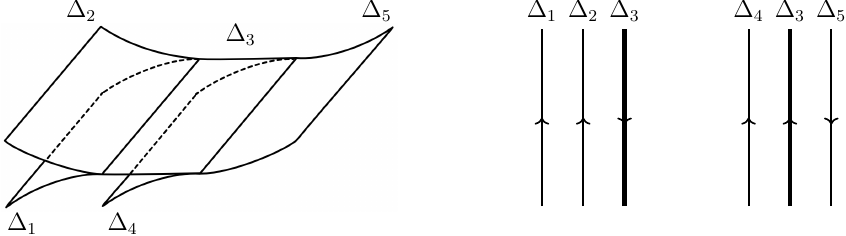}
    \caption{Configuration of branched polyhedron on which we perform pure sliding move I (left) and condition for performing ps-I on normal o-graph (right).}
    \label{PS1}
\end{figure}

\begin{rem}
The pure sliding moves on normal o-graphs are local in the sense that the normal o-graph does not change the out of the sliding region, but is not local in the sense that we need global information to check the conditions (PS-I)--(PS-IV).  In Section \ref{localPS}
we define local pure sliding moves on normal o-graphs, which are local in both of the above sense.
\end{rem}
 
\subsection{Main results for closed $3$-manifolds and combed $3$-manifolds}\label{Section;Main_results}
We state the main results for closed $3$-manifolds and combed $3$-manifolds. The proof of Theorem \ref{th1} will be presented in the next section. 


\begin{thm}\label{th1}
Each of the 16 MP moves in Figure \ref{fig:MP} is derived as a sequence of the primary MP move in Figure \ref{fig:pMP}, the pure sliding moves in Figure \ref{the pure sliding move}, and their inverses.
\end{thm}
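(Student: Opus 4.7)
The plan is to treat the sixteen MP moves case by case, after first reducing the bookkeeping using the symmetries of the MP configuration worked out in Appendix \ref{Section;symmetry}. The primary MP move is one particular $D2$ pattern; the other fifteen differ only in the assignment of orientations to the edges entering and leaving the three true vertices. So I would begin by organizing the sixteen cases according to Benedetti--Petronio's types (A, B, C, D, and their subtypes), and observing that rotations/reflections of the planar diagram reduce each type to a small number of representative normal o-graph configurations that must actually be derived.

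The main technique is the following insertion-and-cancellation scheme. Given an MP configuration $\Gamma_0 \leftrightarrow \Gamma_1$ other than the primary one, I would introduce one or two pairs of crossings on the edges around one of the three vertices by a pure sliding move (chosen so that after insertion the three vertices form the left-hand side of the \emph{primary} MP move, i.e.\ two real crossings and one virtual crossing in the correct cyclic pattern). Applying the primary MP move then produces a diagram with three real crossings; a further pure sliding move cancels the previously introduced virtual/real pair and lands on $\Gamma_1$. In effect, pure sliding moves reorient the local strands at the cost of extra crossings, and the primary MP move is used once in the middle to perform the actual $2$--$3$ topological exchange.

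To carry out a single case I would: (i) draw the circuit diagram by Method B for $\Gamma_0$ around the relevant pair of edges to be slid; (ii) verify that one of the connection conditions (PS-I)--(PS-IV) is satisfied globally, so that the intended pure sliding move is legitimate; (iii) perform the insertion and rewrite the diagram as a primary MP left-hand side (possibly after planar isotopy and Reidemeister type moves on the virtual crossings); (iv) apply the primary MP move; (v) identify another valid pure sliding move to cancel the remaining extra crossings and arrive at $\Gamma_1$. Equivalences between cases that differ only by the orientation of a free edge, or by a rotation that fixes the primary pattern, can be handled uniformly, which will substantially cut down the enumeration.

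The hard part, and the source of almost all the work, will be verifying step (ii) and its counterpart at step (v): the pure sliding moves on normal o-graphs are local in the diagram but non-local in the underlying branched polyhedron, so I must confirm that the Method-B circuit connections around the slid pair really match one of (PS-I)--(PS-IV). For the insertion step this is automatic because I can choose which pair to introduce, but for the cancellation step it requires tracing through the circuit diagram after the primary MP move has been applied and checking that the resulting connectivity still fits a pure sliding pattern. I expect that by choosing the inserted pair carefully (i.e.\ arranging beforehand which of the three lines on each side of the new edge will be glued to which), this trace-through can always be made to close up, but this is the delicate combinatorial content of the proof and is where a genuine case analysis will be unavoidable.
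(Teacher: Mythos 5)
There is a genuine gap in your scheme, and it is pinned down by an invariant the paper records in a remark after Lemma \ref{Lem1}: the quantity (number of positive true vertices) $-$ (number of negative true vertices) is unchanged by every pure sliding move (each inserts or deletes one positive and one negative crossing), while the primary MP move, read in the forward direction, changes it by exactly $+1$ (its two input crossings and three output crossings are all positive). Your uniform recipe --- pure-slide to insert crossings, apply the primary MP move \emph{forward} once ``to perform the actual $2$--$3$ exchange,'' then pure-slide to cancel --- therefore changes this count by exactly $+1$. But only $8$ of the $16$ MP moves change the count by $+1$; the other $8$ change it by $-1$, and for those no sequence consisting of pure slidings and a single forward application of the primary move (nor any sequence in which forward applications outnumber inverse ones) can realize the move. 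For that half you are forced to use the \emph{inverse} of the primary MP move, which structurally reverses your scheme: you must first pure-slide $\Gamma_0$ up to a configuration containing the three-crossing \emph{right-hand} side of the primary move and then collapse it with the inverse. Your proposal does not anticipate this bifurcation, and the sentence committing to a forward application ``in the middle'' is exactly where it breaks.

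Two further points of comparison with the paper's argument. First, the paper does not derive each MP move directly from D2; it proves a web of relations of the form $X \leftsquigarrow Y^{-1}$, meaning $X$ equals one pure sliding move followed by the inverse of \emph{another} MP move $Y$, and then chains these relations through two diagrams whose terminal nodes are $\mathrm{D2}$ and $\mathrm{D2}^{-1}$. The resulting decompositions can involve many pure slidings (the refinement in Proposition \ref{localth1} uses up to $15$), not the ``one before, one after'' pattern you describe, and this relational bookkeeping is what makes the verification of conditions (PS-I)--(PS-IV) tractable: each elementary relation needs only one such check, done locally near the two vertices (Figure \ref{A1circ}). Second, be careful with the symmetry reduction: the involutions of Appendix \ref{Section;symmetry} permute the set of moves but do \emph{not} fix the primary MP move (both send $\mathrm{D2}$ to $\mathrm{C2}$), so showing a representative is derivable from $\{\mathrm{D2}, \text{ps}\}$ only shows its image is derivable from $\{\mathrm{C2}, \text{ps-images}\}$; you would still owe a derivation of $\mathrm{C2}$ from $\mathrm{D2}$. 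The symmetries do cut the work roughly in half, as the paper notes, but only inside an argument that already connects the two halves.
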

Consequently we can replace the 16 MP moves in the equivalence relation for $\Phi$ and $\Phi_{\rm comb}$ in Propositions \ref{BPcomb} and \ref{BPclosed} by the primary MP move and the pure sliding moves.
\begin{cor}\label{cocomb}
The equivalence relation defined by the surjective map
$\Phi_{\rm comb} \co \mathcal{G} \to \mathcal{M}_{\rm comb}$
is generated by the primary MP move and the pure sliding moves. 
\end{cor}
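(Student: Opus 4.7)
My plan is to deduce Corollary \ref{cocomb} directly from Proposition \ref{BPcomb} and Theorem \ref{th1} by comparing generating sets of equivalence relations on $\mathcal{G}$. Since $\Phi_{\rm comb}$ is already known by Proposition \ref{BPcomb} to be surjective with equivalence relation generated by the 0-2 move together with the 16 MP moves, it suffices to show that this equivalence relation coincides, on the nose, with the one generated by the primary MP move and the pure sliding moves.

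For the inclusion of the new generating set into the Benedetti-Petronio generating set, I would note two things. First, the primary MP move is, by its very construction, equivalent to a specific MP move of type D2 (see Figure \ref{D2 and primary MP}), hence it preserves the combed class. Second, the pure sliding moves on normal o-graphs were defined in Section \ref{Section;CPS} precisely as combinatorial realizations of the pure sliding moves on branched polyhedra of \cite[Figure 4.7]{BP}; those moves only modify a branched polyhedron in a way that leaves the thickened manifold and its associated vector field $v(P)$ unchanged up to the natural equivalence, and therefore preserve the combed class. For the reverse inclusion, I would observe that the 0-2 move is itself a pure sliding move, as noted directly after Figure \ref{fig:PS move}, and that each of the 16 MP moves can be realized, by Theorem \ref{th1}, as a sequence of primary MP moves, pure sliding moves, and their inverses.

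Combining the two inclusions will yield that the two equivalence relations coincide, giving the corollary. All the substantive work is concentrated in Theorem \ref{th1}, whose proof is deferred to the next section; the only potential subtlety in the present argument is the verification that each of the four pure sliding moves preserves $\Phi_{\rm comb}$ rather than merely $\Phi$, but this is immediate from the geometric definition of the corresponding move on the branched polyhedron, since the vector field $v(P)$ extends naturally across the local modification on both sides of the move.
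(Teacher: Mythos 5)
Your proposal is correct and follows essentially the same route as the paper, which derives Corollary \ref{cocomb} immediately from Proposition \ref{BPcomb} and Theorem \ref{th1}, using that the primary MP move is the move D2, that the $0$-$2$ move is a pure sliding move, and that the pure sliding moves (being the sliding moves of Benedetti--Petronio realized combinatorially) preserve the combed class. The subtlety you flag at the end is exactly the point the paper treats as immediate, so nothing is missing.
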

\begin{cor}\label{cotop}
The equivalence relation defined by the surjective map
$\Phi \co \mathcal{G} \to \mathcal{M}$
is generated by the primary MP move, the pure sliding moves and the CP move. 
\end{cor}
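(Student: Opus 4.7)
The plan is to derive Corollary \ref{cotop} as a direct consequence of Theorem \ref{th1} and Proposition \ref{BPclosed}, by rewriting the Benedetti--Petronio generating set. Proposition \ref{BPclosed} states that the equivalence relation on $\mathcal{G}$ induced by $\Phi$ is generated by the $0$-$2$ move, the sixteen MP moves, and the CP move. I would show that the first two families of generators can be replaced, without enlarging or shrinking the equivalence relation, by the primary MP move and the pure sliding moves, leaving the CP move untouched.

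First I would recall, as already observed in Section \ref{PS on BS}, that the $0$-$2$ move is itself a particular instance of a pure sliding move, so any application of a $0$-$2$ move can be rewritten as a pure sliding move. Next, Theorem \ref{th1} expresses each of the sixteen MP moves as a finite sequence of the primary MP move, pure sliding moves, and their inverses. Substituting these two replacements into the generating set of Proposition \ref{BPclosed} yields exactly the set consisting of the primary MP move, the pure sliding moves, and the CP move, establishing that this set generates a relation at least as large as the one defined by $\Phi$.

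For the opposite inclusion I would check that each of the three listed moves preserves $\Phi$. The primary MP move is equivalent to a specific MP move of type D2 (cf.\ the introduction and Figure \ref{D2 and primary MP}), so it lies in the Benedetti--Petronio relation and hence preserves the underlying closed $3$-manifold. The pure sliding moves preserve even the combed $3$-manifold by Corollary \ref{cocomb}, hence a fortiori preserve $\Phi$. The CP move preserves $\Phi$ by Proposition \ref{BPclosed}.

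The substantive input is Theorem \ref{th1}; once that is available, the present corollary is essentially a one-line rewriting of Proposition \ref{BPclosed}, and no new difficulty appears. The only minor point to be careful about is the status of the $0$-$2$ move, which must be explicitly identified as a pure sliding move so that it is absorbed into the new generating set rather than remaining as a separate generator.
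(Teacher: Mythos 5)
Your proposal is correct and matches the paper's own (very brief) argument: the paper derives Corollary \ref{cotop} exactly by substituting, in Proposition \ref{BPclosed}, the $0$-$2$ move by a pure sliding move and the sixteen MP moves by the output of Theorem \ref{th1}, leaving the CP move untouched. Your extra care about the reverse inclusion and the explicit absorption of the $0$-$2$ move is consistent with, and slightly more detailed than, what the paper records.
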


\begin{rem}\label{flow}
Corollary \ref{cocomb}  are equivalent to  \cite[Theorem 2.3]{Ishii} by Ishii, where he used flow spines for closed $3$-manifolds. His proof contains methods involving non-singular flows and local sections in $3$-manifolds, while our proof contains only combinatorial argument on normal o-graphs (after  admitting the results in \cite{BP}). The first regular moves $R_1$ in \cite{Ishii} corresponds to \textit{the primary MP move negative}, that is, the primary MP move with each crossing replaced by negative crossings, and  the inverse of the second regular moves $R_2$-$(x), x=1,2,3,4,$ correspond to the pure sliding moves. The  ``regular equivalence" in \cite{Ishii}  corresponds to the equivalence relation on closed normal o-graphs generated by the primary MP move negative and the four moves in Figure \ref{the pure sliding move} without condition (PS-I)--(PS-IV), and ``strongly regular equivalence" corresponds equivalence relation generated by the primary MP move negative and moves in Figure \ref{the pure sliding move} requiring involved normal o-graphs to be closed.  This condition is combinatorially nothing but the conditions (PS-I)--(PS-IV). Note also that Theorem \ref{th1} works for not only closed normal o-graphs but also arbitrary normal o-graphs representing $3$-manifolds with boundary.

\end{rem}
\subsection{Proof of Theorem \ref{th1}}\label{Section;proof}
We prove Theorem \ref{th1}.
We classify the 16 MP moves into four types A, B, C, D depending on their shape, and then label $1,2,3,4$ in each type depending on the orientation of the two edges with arbitrary orientation, see Figure \ref{The MP moves of type A}-\ref{The MP moves of type D}.

\begin{figure}[H]
    \centering
    \includegraphics[scale=0.8]{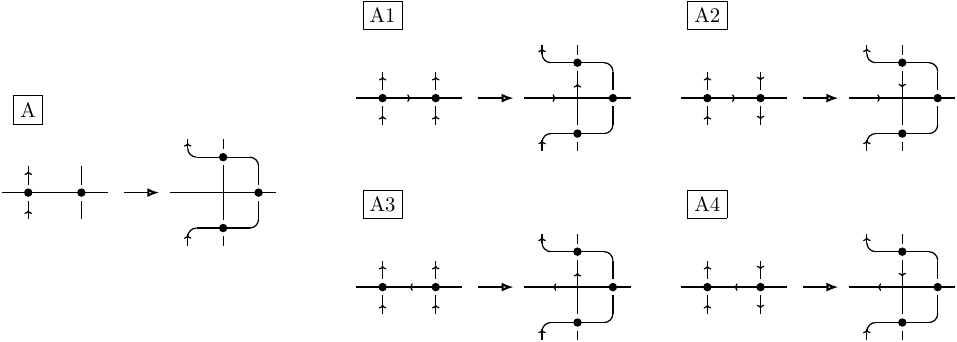}
    \caption{MP moves of type A.}
    \label{The MP moves of type A}
\end{figure}

\begin{figure}[H]
    \centering
    \includegraphics[scale=0.8]{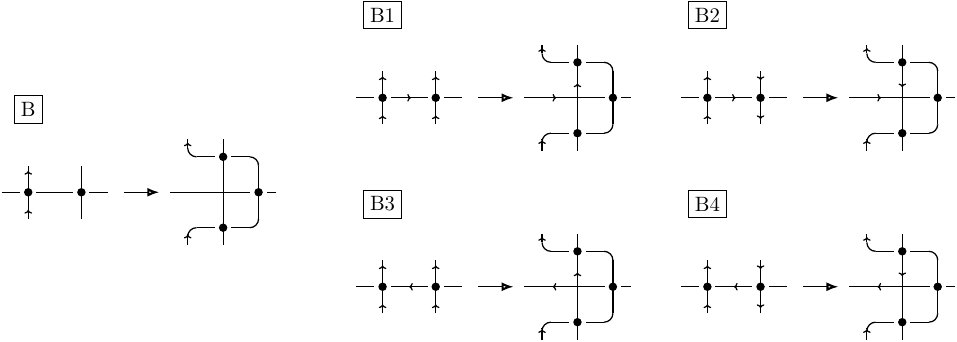}
    \caption{MP moves of type B.}
    \label{The MP moves of type B}
\end{figure}
\begin{figure}[H]
    \centering
    \includegraphics[scale=0.8]{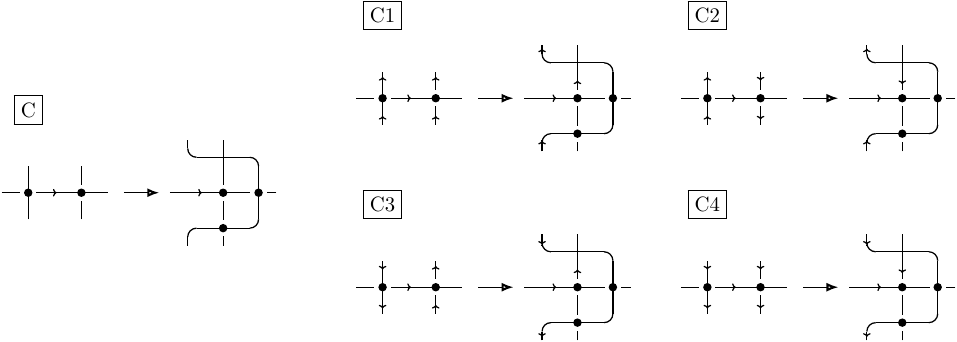}
    \caption{MP moves of type C.}
    \label{The MP moves of type C}
\end{figure}
\begin{figure}[H]
    \centering
    \includegraphics[scale=0.8]{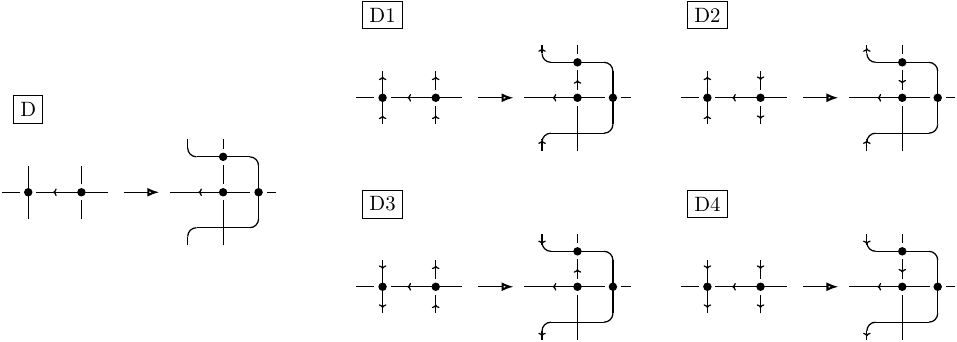}
    \caption{MP moves of type D.}
    \label{The MP moves of type D}
\end{figure}

The primary MP move is equivalent to $\rm D2$ as in the following  figure. 
\begin{figure}[H]
    \centering
    \includegraphics[scale=0.7]{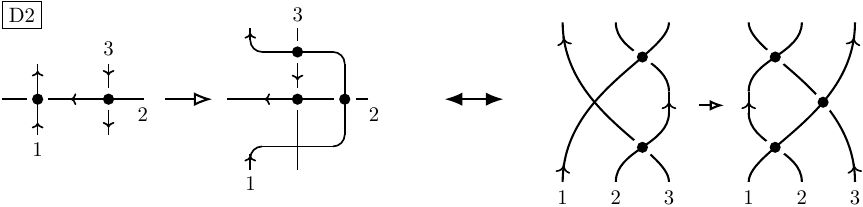}
    \caption{Correspondence between  D2 and primary MP move.}
    \label{D2 and primary MP}
\end{figure}
\noindent Thus, we will prove that each MP move is derived as a sequence of $\rm D2$,  the pure sliding moves, and their inverses. 

See the following figure.

\begin{figure}[H]
    \centering
    \includegraphics[scale=0.7]{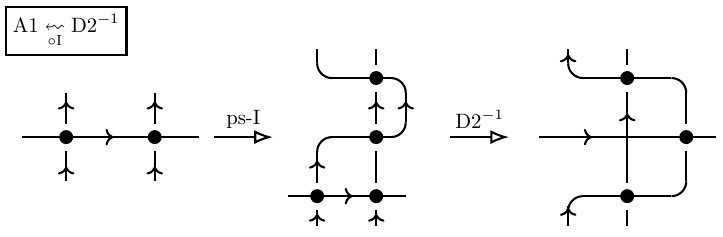}
    \caption{A1 is sequence of D$2^{-1}$ and ps-I.}
    \label{D2toA2}
\end{figure}
\noindent The left picture and the right picture are normal o-graphs before and after performing $\rm A1$, and thus $\rm A1$ is derived as a sequence of the ps-I and $\rm D2^{-1}$ in this order.
In this case, we denote by $\rm A1 \underset{\circ \mathrm{I}}{\leftsquigarrow}  D2^{-1}$ standing for ``$\rm A1= (D2^{-1}) \circ (ps$-I$)$" regarding moves as maps.  
Here, we can check the condition (PS-I)  as in Figure \ref{A1circ} for performing the first ps-I.

\begin{figure}[H]
    \centering
    \includegraphics[scale=0.7]{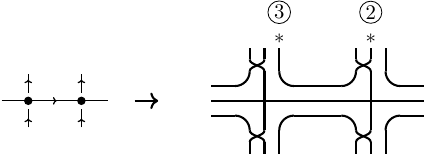}
    \caption{Confirming condition (PS-I).}
    \label{A1circ}
\end{figure}

When $\rm A1 \underset{\circ \mathrm{I}}{\leftsquigarrow} D2^{-1}$, we also have $\rm  A1^{-1}\underset{\mathrm{I}^{-1} \circ }{\leftsquigarrow} D2$ which stands for ``$\rm A1^{-1}= (ps$-I$)^{-1} \circ (\rm D2)$", following Figure \ref{D2toA2} inversely. 
We will use similar notations also for other types of the MP moves.

\begin{lem}\label{Lem1}
We have the following relations among the MP moves and their inverses.
\begin{figure}[H]
    \centering
    \includegraphics[scale=0.8]{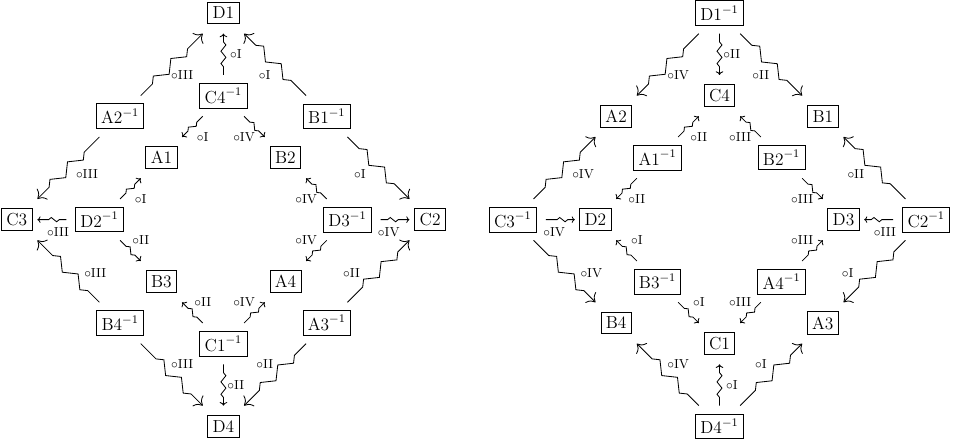}
    \caption{Relations among MP moves and their inverses.}
    \label{Relations among MP moves}
\end{figure}
\end{lem}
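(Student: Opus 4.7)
The plan is to verify each relation in Figure \ref{Relations among MP moves} by explicit diagrammatic computation, following the template established for $\mathrm{A1} \underset{\circ \mathrm{I}}{\leftsquigarrow} \mathrm{D2}^{-1}$ in Figure \ref{D2toA2}. For every relation listed, I would draw the normal o-graph appearing on the left-hand side of the corresponding MP move, then apply the indicated sequence of $\mathrm{D2}$, $\mathrm{D2}^{-1}$, and pure sliding moves $\mathrm{ps}$-$\mathrm{I}$ through $\mathrm{ps}$-$\mathrm{IV}$ (and their inverses), and finally verify that the output agrees with the right-hand side of the MP move up to planar isotopy and the Reidemeister-type moves of Figure \ref{fig:RM}.

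Concretely, for each arrow in Figure \ref{Relations among MP moves}, the verification splits into two independent parts. First, the \emph{local diagrammatic check}: drawing all intermediate diagrams and confirming that the claimed sequence of elementary moves carries the source normal o-graph to the target one. Since each elementary move acts in a bounded region of the diagram, this reduces to finitely many straightforward pictures. Second, the \emph{circuit check}: whenever an instance of $\mathrm{ps}$-$x$ is applied, one must verify the corresponding condition $(\mathrm{PS}$-$x)$ from Section \ref{Section;CPS} by drawing the circuit diagram obtained by Method B on the relevant pair of edges and reading off which of the three parallel strands on the left connects to which strand on the right. For inverses of pure sliding moves, the same condition must be confirmed on the post-move diagram. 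I illustrated this step already in Figure \ref{A1circ} for the first case; the remaining cases follow the same recipe.

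The combinatorial content is routine but voluminous. To keep the argument organized, I would treat the MP moves type by type (A, B, C), grouping relations that begin with the same prefix of moves so that common intermediate diagrams can be reused. Symmetries among the four variants within each type (arising from the choices of orientation on the ambiguously oriented edges) will further cut down the number of genuinely different pictures one has to draw; these symmetries are the subject of Appendix \ref{Section;symmetry} and can be invoked here.

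The main obstacle is the circuit bookkeeping. Unlike the local move itself, condition $(\mathrm{PS}$-$x)$ depends on global data about the $2$-cell structure, so one cannot simply read it off the small patch where the move is performed. In practice this means that at every application of a pure sliding move one must carefully track how the three strands of the circuit diagram are matched across the two edges involved, and verify that this matching is consistent with the branching orientations of the $2$-cells in the ambient branched polyhedron. Once this is done, combining the verified relations in Figure \ref{Relations among MP moves} yields the derivation of all 16 MP moves from $\mathrm{D2}$, the pure sliding moves, and their inverses, establishing Theorem \ref{th1}.
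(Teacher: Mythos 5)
Your proposal is correct and follows essentially the same route as the paper: the paper's proof of Lemma \ref{Lem1} is exactly an explicit diagrammatic verification of each arrow (Figures \ref{MPPS1} and \ref{Proof of Main theorem}), with the condition $(\mathrm{PS}$-$x)$ checked via the Method B circuit diagram as in Figure \ref{A1circ}, and the symmetry reduction you mention is precisely the simplification noted at the end of Appendix \ref{Section;symmetry}. The only difference is that the paper actually exhibits the intermediate diagrams rather than describing the procedure, so to complete your argument you would need to supply those pictures.
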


\begin{proof}
The proof for $\rm A \leftsquigarrow  D^{-1}$ type, i.e., the proof for $\rm  A2 \leftsquigarrow D1^{-1}$, $\rm  A3 \leftsquigarrow   D4^{-1}$, $\rm A4 \leftsquigarrow  D3^{-1}$, are similar to that for 
$\rm A1 \leftsquigarrow D2^{-1}$ in figure \ref{D2toA2} after adjusting the orientations of the two edges, see Figure \ref{MPPS1}.

\begin{figure}[H]
    \centering
    \includegraphics[scale=0.9]{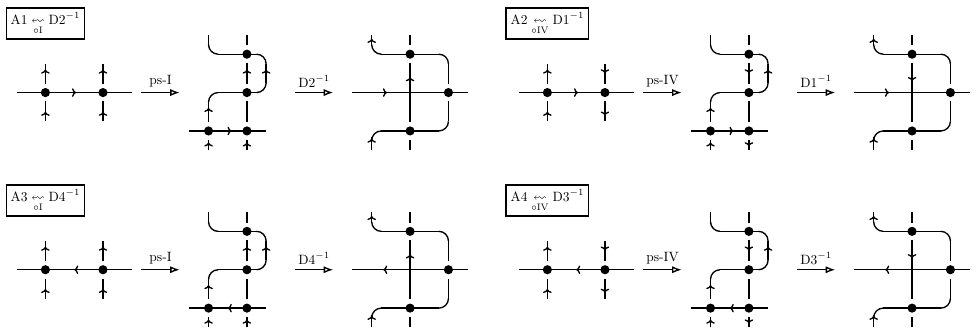}
    \caption{Proof of $\rm  A\protect\leftsquigarrow D^{-1}$ type.}
    \label{MPPS1}
\end{figure}

\begin{figure}[H]
    \centering
    \includegraphics[scale=0.9]{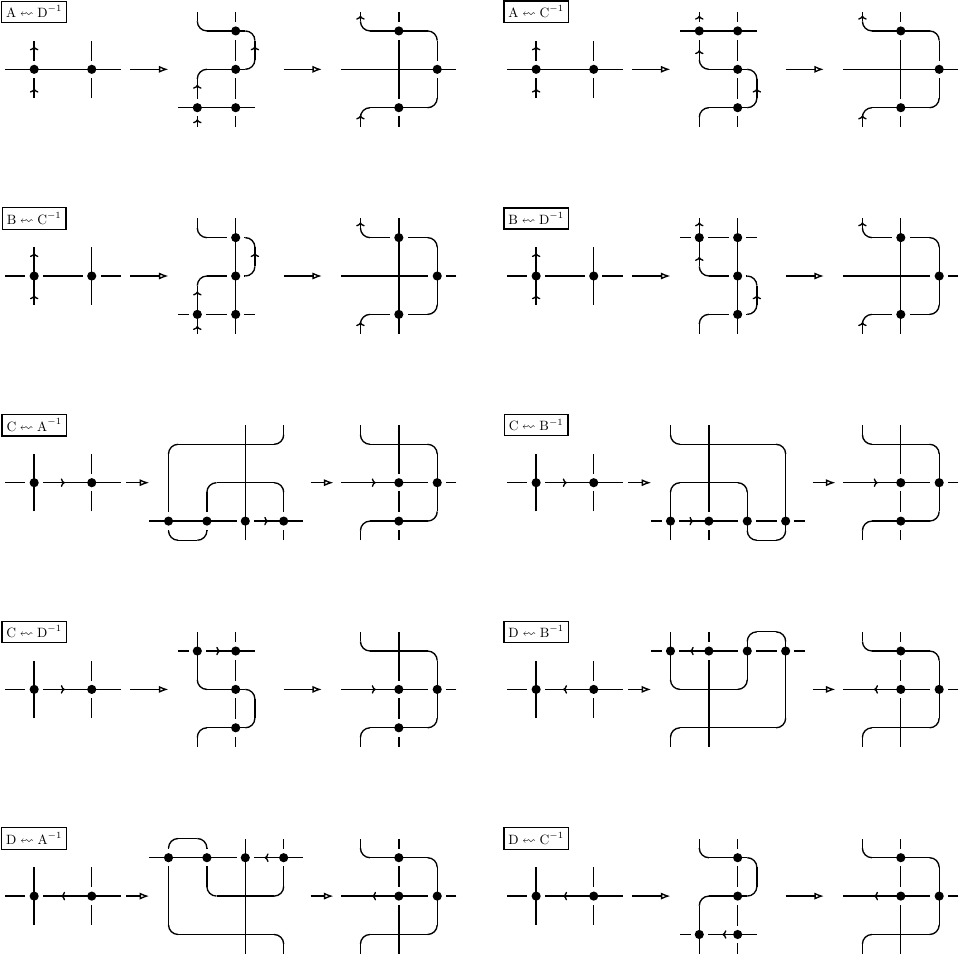}
    \caption{Proof of the diagrams in  Figure \ref{Relations among MP moves}.}
    \label{Proof of Main theorem}
\end{figure}

Similarly, the proof of the types $\rm  A\leftsquigarrow C^{-1}$, $\rm  B\leftsquigarrow C^{-1}$, $\rm B \leftsquigarrow D^{-1}$, $\rm C \leftsquigarrow A^{-1}$, $\rm C \leftsquigarrow B^{-1}$, $\rm  C\leftsquigarrow D^{-1}$, $\rm D \leftsquigarrow A^{-1}$, 
 $\rm D \leftsquigarrow B^{-1}$, 
 $\rm D \leftsquigarrow C^{-1}$  
  are given as in Figure \ref{Proof of Main theorem}, where the types of the MP moves and  the pure sliding moves follow the arrows in Figure \ref{Relations among MP moves}. Thus, we have the assertion.

\end{proof}

\begin{rem}
In the diagram in Figure \ref{Relations among MP moves}, there are luck of symmetries such that there are no arrow between moves of A-type and B-type. This is because of the asymmetry of the pure sliding moves and MP moves. If we consider including bumping moves (cf. \cite[Figure 3.22, Figure 4.16]{BP}), the diagram becomes more symmetric.
\end{rem}

\begin{rem}
We cannot derive any MP move or MP move inverse in the left diagram in Figure \ref{Relations among MP moves} from ones in the right diagram, and vice versa. This is because the number of positive true vertices minus the number of negative ones does not change under the pure sliding moves, and increases (resp. decreases) by one under the MP moves and the MP move inverses in the left (resp. right) diagram (cf. \cite[Remark 3.5.4]{BP}).
\end{rem}

\begin{proof}[Proof of Theorem \ref{th1}]
In Figure \ref{Relations among MP moves}, observe that each pair of the MP moves at the same location in the left and right diagrams are inverses to each other, and the arrows at the same location in the left and right diagrams point in opposite directions. Recall that $\rm A1 \underset{\circ \mathrm{I}}{\leftsquigarrow} D2^{-1}$ implies $ \mathrm{A1}^{-1}\underset{\mathrm{I}^{-1} \circ}{\leftsquigarrow} \mathrm{D2}$, which is also true for other relations. Thus, we can transport each arrow of type $\underset{\circ \mathrm{X}}{\leftsquigarrow}$ being changed to the arrows of type $\underset{\mathrm{X}^{-1} \circ}{\leftsquigarrow}$ from the left diagram to the corresponding place in the right diagram, and vice versa. Consequently, all MP moves or their inverses in the same diagram are equivalent up to the pure sliding moves and their inverses, as shown in Figure \ref{Relations among MP moves2}. Here, $a \leftrightsquigarrow b$ stands for $a \underset{\circ \mathrm{X}}{\leftsquigarrow} b$ and $a \underset{\mathrm{Y}^{-1} \circ}{\leadsto} b$, or $a \underset{\mathrm{Y}^{-1} \circ}{\leftsquigarrow} b$ and $a \underset{\circ \mathrm{X}}{\leadsto} b$, where each X and Y stands for one of ps-I -- ps-IV. \begin{figure}[H]
    \centering
    \includegraphics[scale=0.8]{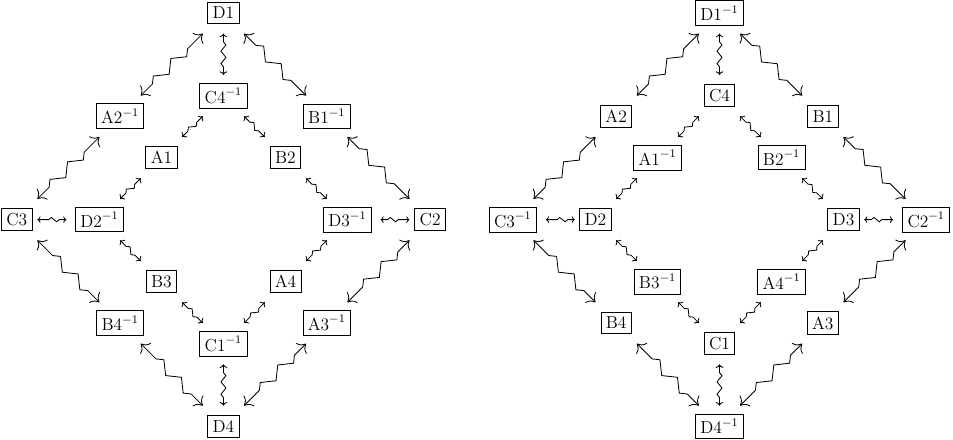}
    \caption{Relations among MP moves and their inverses up to the pure sliding moves and their inverses.}
    \label{Relations among MP moves2}
\end{figure}
 
Observe that each type of the MP moves is either in the left or right diagram, and its inverse is in the other diagram. In particular, from $\rm D2$ and $\rm D2^{-1}$,  we can obtain all MP moves and their inverses. This completes the proof of Theorem \ref{th1}.\end{proof}

\subsection{Local pure sliding moves and refinement of Theorem \ref{th1}} \label{localPS}
By taking the above argument forward, we can refine Theorem \ref{th1}  as follows. 
See the following pure sliding move which has already appeared in Figure \ref{D2toA2}.
\begin{figure}[H]
    \centering
    \includegraphics[scale=0.7]{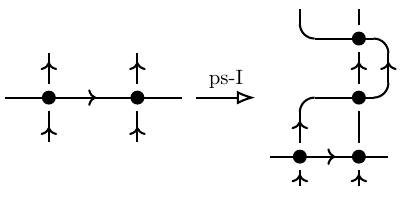}
    \caption{Local pure sliding move appearing in $\rm A1 \protect\leftsquigarrow D2^{-1}$. }
    \label{D2toA22}
\end{figure}
Recall that the two true vertices in the left picture enable us to check the condition (PS-I) locally as in Figure \ref{A1circ}. 
Similarly, there are 40 pure sliding moves in the proof of Lemma \ref{Lem1} appearing as the first move in each sequence in Figure \ref{Proof of Main theorem}. We call these pure sliding moves \textit{local pure sliding moves}.

\begin{prop} \label{localth1}
There is a set consisting of 18 local pure sliding moves such that each MP move is derived as a sequence of any MP move and 15 pure sliding moves in the set, and their inverses.
\end{prop}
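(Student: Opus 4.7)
The plan is to quantify and minimize the number of distinct local pure sliding moves used in the proof of Theorem \ref{th1}. I would begin by enumerating the 40 local pure sliding moves appearing as the first move of each sequence in Figure \ref{Proof of Main theorem}. Each of these corresponds to one arrow in Figure \ref{Relations among MP moves}, so the 40 local PS moves collectively encode all the pairwise derivations $X \underset{\circ \mathrm{Y}}{\leftsquigarrow} Z^{\pm 1}$ exhibited in Lemma \ref{Lem1}.

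Next, I would exploit the inversion symmetry that is already used in the proof of Theorem \ref{th1}: a relation $X \underset{\circ \mathrm{Y}}{\leftsquigarrow} Z^{-1}$ in the left diagram of Figure \ref{Relations among MP moves} automatically produces the dual relation $X^{-1} \underset{\mathrm{Y}^{-1} \circ}{\leftsquigarrow} Z$ in the right diagram. Consequently, many of the 40 local PS moves above are redundant: once a local PS move is chosen to witness an edge in the left diagram, the corresponding edge in the right diagram is supplied by its inverse, which does not add a new element to our set. Combined with the observation that many arrows in Figure \ref{Relations among MP moves2} can be realized by the same local PS move (the witnessing move depends only on the local configuration near the two involved edges, not on the global MP type), the count of distinct local PS moves needed drops substantially.

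I would then extract an explicit subset $S$ of 18 local PS moves from the 40 whose induced subgraph on the 16 MP-move vertices (with an edge $X \mathrm{-}Y$ recorded for each $X \underset{\circ \mathrm{Z}}{\leftsquigarrow} Y^{\pm 1}$ with $Z \in S$) remains connected. Any spanning tree of such a graph has exactly $16 - 1 = 15$ edges, so for any chosen source MP move $X$ and target $M$, a shortest path from $X$ to $M$ in the tree has length at most $15$, and each edge along the path corresponds to one local PS move from $S$ together with $\mathrm{D2}^{\pm 1}$, as in the sequences of Figure \ref{Proof of Main theorem}. Since the primary MP move itself is equivalent to $\mathrm{D2}$ by Figure \ref{D2 and primary MP}, and $\mathrm{D2}$ is in turn obtained from any other chosen MP move by the same traversal, the derivation of each MP move uses at most 15 elements of $S$ together with the chosen starting MP move (and the relevant inverses).

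The main obstacle will be the explicit combinatorial verification: for every ordered pair $(X, M)$ of the 16 MP moves, one must produce a path of length at most 15 in the connection graph using only edges from the chosen set $S$ of 18, and must confirm that each edge along such a path is realized by a local PS move whose condition (PS-I)--(PS-IV) is satisfied globally on the relevant normal o-graph (as is checked for $\mathrm{A1} \leftsquigarrow \mathrm{D2}^{-1}$ in Figure \ref{A1circ}). I would carry out this by selecting the 18 moves so that both 8-vertex sub-diagrams of Figure \ref{Relations among MP moves2} are individually connected with as much overlap as possible under the inversion symmetry, then verifying the 16 cases for the starting MP move $X$.
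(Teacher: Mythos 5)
Your proposal follows essentially the same route as the paper: it also selects 18 of the 40 local pure sliding moves witnessing the arrows of Figure \ref{Relations among MP moves}, uses the inversion symmetry to transport arrows between the two diagrams without enlarging the set, and then reduces to 15 moves per derivation by a connectivity argument. The paper phrases that last step as removing 3 specific arrows for each chosen source MP move while keeping connected paths, which is precisely your spanning-tree observation on a connected graph with 16 vertices and 18 edges.
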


\begin{proof}
We initially select 18 arrows from the diagrams in Figure \ref{Relations among MP moves}, as depicted in Figure \ref{Relations among MP moves3}. 
We can then convert each arrow of type $\underset{\circ \mathrm{X}}{\leftsquigarrow}$ into arrows of type $\underset{\mathrm{X}^{-1} \circ}{\leftsquigarrow}$ at corresponding locations in the other diagram, using a similar argument as in the proof of Theorem \ref{th1}. Subsequently, we obtain oriented paths consisting of arrows, allowing us to travel from any MP move to another. Let's consider an arbitrary MP move, say D2 for example. We can remove $3$ additional arrows, highlighted in gray in Figure \ref{Relations among MP moves3}, while maintaining connected paths from D2  or $\rm D2^{-1}$. This process of removing $3$ additional arrows can be applied to each MP move.  Thus, we have the assertion.

\begin{figure}[H]
    \centering
    \includegraphics[scale=0.8]{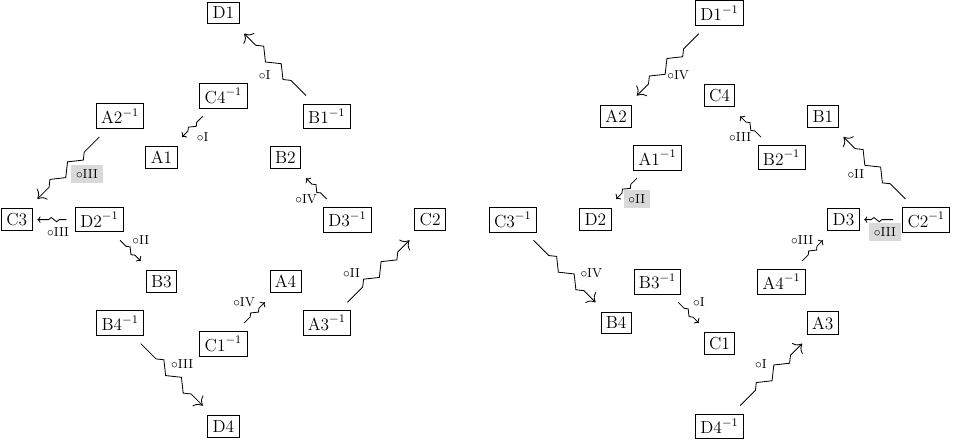}
    \caption{Paths by local pure sliding moves.}
    \label{Relations among MP moves3}
\end{figure}

\end{proof}
\begin{cor}
Each MP move is derived as a sequence of the primary MP move and 15 local pure sliding moves, and their inverses.
\end{cor}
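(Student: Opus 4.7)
The plan is to deduce this corollary directly from Proposition \ref{localth1} by specializing the choice of ``any MP move'' to D2 and then converting D2 to the primary MP move via the equivalence recorded in Figure \ref{D2 and primary MP}. In other words, the two ingredients are already on the table; what remains is to splice them together.

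First, I would apply Proposition \ref{localth1} with the designated MP move taken to be D2. This yields, for each of the 16 MP moves $X$, a derivation of $X$ as a sequence whose terms lie in $\{\mathrm{D2}, \mathrm{D2}^{-1}\}$ together with a 15-element subset $S$ of the 18 distinguished local pure sliding moves (and their inverses). Concretely, $S$ is the set obtained by pruning from Figure \ref{Relations among MP moves3} the three arrows identified in the proof of Proposition \ref{localth1} as removable once D2 and $\mathrm{D2}^{-1}$ are kept as generators, while preserving oriented paths to every other vertex of the diagram.

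Next, I would invoke the equivalence between D2 and the primary MP move shown in Figure \ref{D2 and primary MP}: the two moves are one and the same local replacement, presented with a cosmetic difference in the positioning of the crossings. Substituting the primary MP move for each occurrence of D2 (and the inverse primary MP move for each occurrence of $\mathrm{D2}^{-1}$) in the derivations obtained in the previous paragraph produces, for every MP move $X$, an expression of $X$ as a sequence of the primary MP move, 15 local pure sliding moves from $S$, and their inverses.

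The only step that demands real care is the bookkeeping inside Proposition \ref{localth1}, namely verifying that pruning three arrows from Figure \ref{Relations among MP moves3} still leaves oriented paths from D2 and $\mathrm{D2}^{-1}$ to every one of the 16 MP moves, so that each target $X$ is reachable using only the retained 15 moves. This verification has already been carried out inside the proof of Proposition \ref{localth1}, so the corollary follows as a direct specialization combined with the D2 / primary MP move identification of Figure \ref{D2 and primary MP}.
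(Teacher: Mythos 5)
Your proposal is correct and matches the paper's own proof: the paper likewise derives the corollary by specializing Proposition \ref{localth1} to D2 and then invoking the equivalence between D2 and the primary MP move from Figure \ref{D2 and primary MP}. The additional bookkeeping you describe (pruning three arrows while keeping paths from D2 and $\mathrm{D2}^{-1}$) is indeed already contained in the proof of Proposition \ref{localth1}, so nothing further is needed.
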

\begin{proof}
Recall from Figure \ref{D2 and primary MP} that the primary MP move is equivalent to the MP move D2, which, together with Proposition \ref{localth1}, confirms the assertion. 
\end{proof}

\section{Integral normal o-graphs and framed $3$-manifolds}\label{Section;frame}
 In this section we follow the notation in \cite{MST2}.

\subsection{Branched spines and framed $3$-manifolds}\label{subsec: framing} 
In what follows we sometimes use metrics of $3$-manifolds for convenience, while the results do not depend on them. Let $M$ be a closed $3$-manifold. 
A \textit{{framing}} $(v_1, v_2, v_3)$ of $M$ is a trivialization of the tangent bundle $TM$ such that $v_i\perp v_j$ for $i,j=1,2,3$ and $i\neq j$.
We assume that the orientation of $M$ induced from the framing matches with the existing one.
Two framed $3$-manifolds $(M, v_1, v_2, v_3)$ and  $(M',v_1^{\prime},v_2^{\prime},v_3^{\prime})$ are \textit{{equivalent}} if there exists a diffeomorphism $h\co M\to M^{\prime}$ such that $(h_*v_1, h_*v_2, h_*v_3)$ is homotopic through framings to $(v_1^{\prime},v_2^{\prime},v_3^{\prime})$. We denote by $\mathcal{M}_{\text{fram}}$  the
 set of equivalence classes of oriented framed $3$-manifolds.
Notice that we can obtain the third vector $v_3$ of a framing from $v_1$, $v_2$ and the orientation of $M$.
Thus, we represent a framed $3$-manifold as $(M, v_1, v_2)$ specifying only the first two vectors $v_1, v_2$.

Let $P$ a closed branched spine representing a  combed $3$-manifold $(\widehat M(P), \widehat v_1(P))$. Observe that $v_1$ is perpendicular to $P$, thus $(\mathbb{R}\widehat v_1)^{\perp}$ is the ``tangent bundle" of $P$, where tangency makes sense because $P$ is branched. The Euler class $\mathcal{E}\in H^2(\widehat M;\mathbb{Z})$ of $(\mathbb{R}\widehat v_1)^{\perp}$ is the obstruction to the existence of a framing of $M$ which extends $v_1$, and is represented by the \textit{Euler cochain} $c_P\in C^2(P;\mathbb{Z})$ defined in \cite{BP}*{Proposition 7.1}.
We will explain the definition of the Euler cochain and its nature in Section \ref{Eulercochain} before we study framing explicitly.

Given a 1-cochain $x\in C^1(P;\mathbb{Z})$ satisfying $\delta x= c_P$, we can define the second vector $\widehat v_2(P,x)$ on $(\widehat M(P), \widehat v_1(P))$ as follows.
We first define the second vector $v_2(P,x)$ on $M(P)$ near the 0-cells as in Figure \ref{sencond_combing_around_vertices}. 
\begin{figure}[H]
    \centering
    \includegraphics[scale=1]{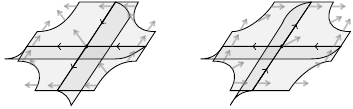}
    \caption{Second vector $v_2$ of framing near vertex of type $+$ (left) and of type $-$ (right).}
    \label{sencond_combing_around_vertices}
\end{figure}
\noindent Then we extend it over $S(P)$ so that the 1-cochain $x$ defines the rotation number of $v_2(P,x)$ relative to the tangent vector of $e$ having the canonical orientation coming from the branching of $P$.
We extend it over $2$-cells, where we can do this because of the boundary condition $\delta x= c_P$ (\cite{BP}*{Proposition 7.2.1}), and since $\pi_2(S^1)=0$, the extension over the 2-cells are unique up to homotopy.
Since $\pi_2(S^1)=\pi_3(S^1)=0$, the second vector $v_2(P,x)$ defined on $M(P)$ extends uniquely to $\widehat{M}(P)$.  Thus, we get the unique framed $3$-manifold $(\widehat{M}(P), \widehat v_1(P), \widehat v_2(P,x))$ up to equivalence. 
Conversely, every framed $3$-manifold is obtained in this way \cite{BP}*{Proposition 7.2.4}.

\subsection{Integral normal o-graphs}\label{sec:BP-diagrams}

An \textit{{integral normal o-graph}} is a normal o-graph  with an integer  weight attached on each edge.
Note that the correspondence of normal o-graphs and branched polyhedrons implies the correspondence of integral normal o-graphs and pairs $(P,x)$ of  a branched polyhedron $P$  and  its $1$-cochain $x\in C^1(P;\mathbb{Z})$. 
A \textit{{framed integral normal o-graph}} is an integral closed normal o-graph satisfying $\delta x= c_P$,  which represents a framed $3$-manifold. 
For example, see Figure \ref{EX1} for framed integral normal o-graphs representing $S^3$ and the lens space $L(2,1)$ with certain framings \footnote{For $S^3$ it extends the combing induced by the Hopf fibration, and for the lens space $L(2,1)$ it extends the canonical combing induced by its Seifert fibered structure \cite{EndoIshii}.}.

\begin{figure}[ht]
    \centering
    \includegraphics{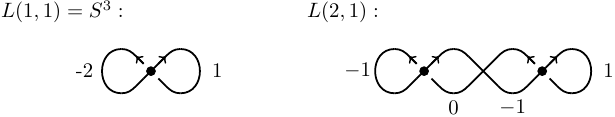}
    \caption{Framed integral normal o-graphs representing $S^3$ and $L(2,1)$ with certain framings.}\label{EX1}
\end{figure}

We consider three types of moves on framed integral normal o-graphs, the \textit{integral $0$-$2$ move} in Figure \ref{fig:0-2 move}, the  \textit{integral MP-move} in Figure \ref{fig:MP-move}, and the \textit{integral H-move} in Figure \ref{fig:H-move}. Here, in Figure \ref{fig:MP-move} the orientations of the non-oriented edges are arbitrary if they match before and after the move, and   in Figure \ref{fig:MP-move} and \ref{fig:H-move} if there are multiple weights on an edge after the move, the weights should be added in the additive group $\mathbb{Z}$. These moves do not change the framing of associated $3$-manifolds \cite{BP, MST2}.

  \begin{figure}[H]
    \centering
    \includegraphics[scale=0.7]{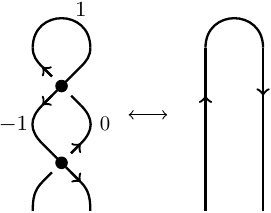}
    \caption{Integral $0$-$2$ move.}
    \label{fig:0-2 move}
\end{figure}
\begin{figure}[H]
    \centering
    \includegraphics[scale=0.8]{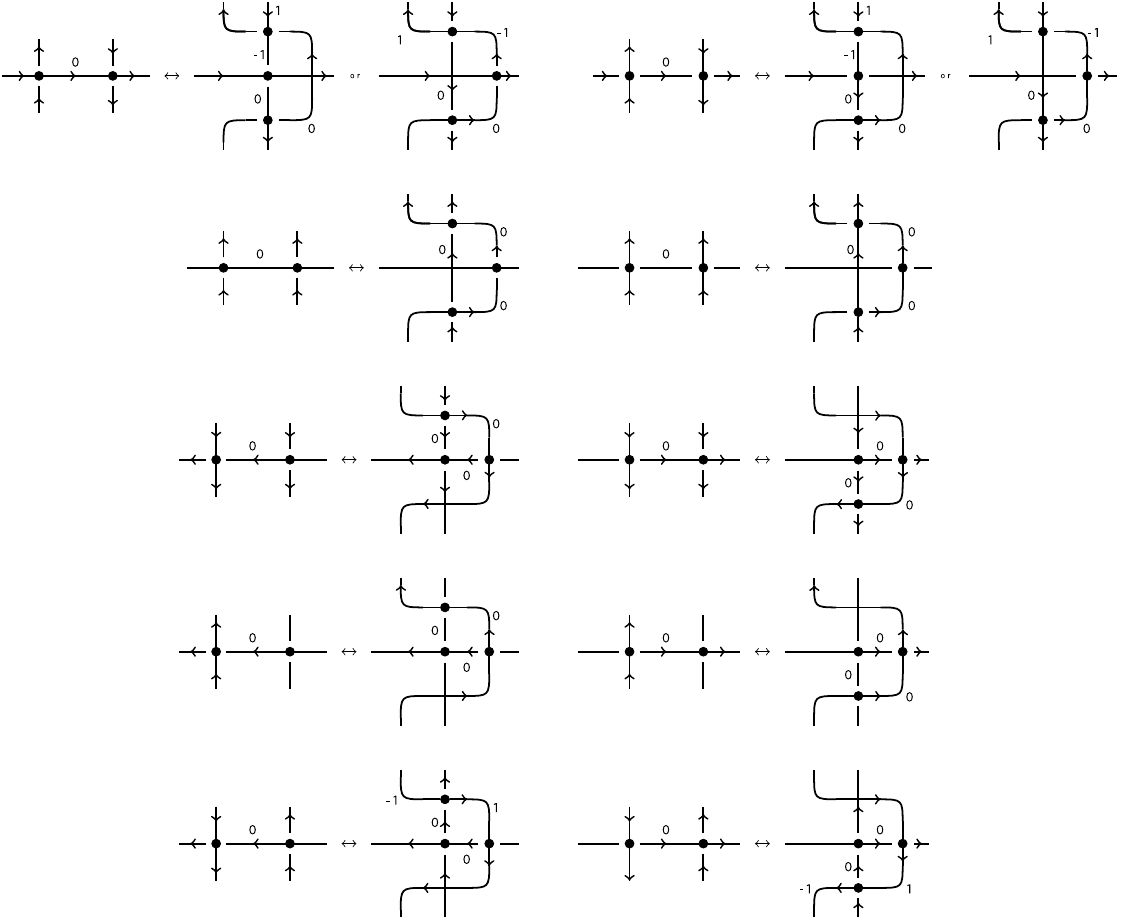}
    \caption{Integral MP-move. Orientation of each non-oriented edge is arbitrary if it matches before and after  move.}
    \label{fig:MP-move}
\end{figure}

\begin{figure}[H]
    \centering
    \includegraphics{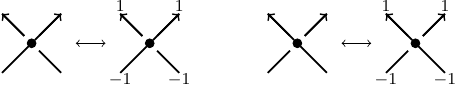}
    \caption{Integral H-move.}
    \label{fig:H-move}
\end{figure}

\subsection{Framed normal o-graphs and framed $3$-manifolds}\label{Section;framed_o-graphs}

The argument in the previous section shows that for each closed normal o-graph $\Gamma$ and its $1$-chain $x\in C^1(\Gamma; \mathbb{Z})$ such that $\delta x=c_{P}$ we have a framed $3$-manifold $\Phi_{\rm fram}(\Gamma,x)$, and each equivalent class of  framed $3$-manifolds is obtained  in this way.
In fact, $\Phi_{\rm fram}(\Gamma, x)$ depends only on $(\Gamma,  [x]_2)$, where $[x]_2\in C^1(\Gamma, \mathbb{Z}_2)$ is the projection of $x\in C^1(\Gamma, \mathbb{Z})$   (\cite[Theorem ]{BP}). 

A \textit{{framed normal o-graph}} \cite{BP} is the image $(\Gamma, [x]_2)$ of a framed integral normal o-graph $(\Gamma, x)$ by the projection on the second factor.
We denote by $\mathcal{G}_{\rm fram}$ the set of framed normal o-graphs, and define the  \textit{framed $0$-$2$ move},  the \textit{framed MP-move}, and the \textit{framed H-move} on $\mathcal{G}_{\rm fram}$ as the projections of the corresponding integral moves on integral normal o-graphs. 
With these conventions, we have a surjective map
\begin{align*}
\Phi_{\rm fram} \co \mathcal{G}_{\rm fram} \to \mathcal{M}_{\rm fram}.
\end{align*}

\begin{prop}[{Benedetti-Petronio \cite[Theorem 1.4.3]{BP} }]\label{BPfram}
The equivalence relation defined by  $\Phi_{\rm fram}$
is generated by the framed $0$-$2$ move, the framed MP moves, and the framed H-move.
\end{prop}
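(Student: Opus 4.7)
The plan is to reduce to Proposition \ref{BPcomb} for combed $3$-manifolds and then analyze the additional data required to upgrade a combing to a framing, taking advantage of the integral lift discussed in Section \ref{sec:BP-diagrams}.

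First I would establish the easy direction: each of the framed $0$-$2$, MP, and H-moves preserves the framed $3$-manifold it represents. For the framed $0$-$2$ and MP moves this is obtained by lifting to integral normal o-graphs and verifying that the integer weights in Figures \ref{fig:0-2 move} and \ref{fig:MP-move} are precisely those that transport the boundary condition $\delta x = c_P$ correctly across the local move, so that the second vector $\widehat v_2(P,x)$ of Section \ref{subsec: framing} extends (uniquely up to homotopy) through the modified region. For the H-move, I would check that at the integral level the cochain change is a coboundary $\delta y$ supported at a single $0$-cell; since $v_2(P,x)$ is determined by $x$ up to the ambiguity of adding $\delta y$ (which amounts to choosing a different trivialization near a $0$-cell), the framing is unchanged. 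Reducing modulo $2$ then shows the framed versions preserve $\Phi_{\rm fram}$.

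For the converse, suppose $(\Gamma_1, [x_1]_2)$ and $(\Gamma_2, [x_2]_2)$ determine equivalent framed $3$-manifolds. Choose integer lifts $x_1,x_2$ with $\delta x_i = c_{P_i}$. Forgetting the framing, the underlying combed $3$-manifolds agree, so by Proposition \ref{BPcomb} there exists a sequence of $0$-$2$ and MP moves connecting $\Gamma_1$ and $\Gamma_2$. I would lift this combinatorial sequence to the integral setting by inductively choosing integer weights on each intermediate normal o-graph so that every combinatorial $0$-$2$ or MP move is realized as the corresponding integral move in Figures \ref{fig:0-2 move}, \ref{fig:MP-move}. The output is a sequence of integral moves ending at $(\Gamma_2, x_2')$ for some integer cochain $x_2'$ with $\delta x_2' = c_{P_2}$, and by the first part $(\Gamma_2, x_2')$ still represents the same framed $3$-manifold as $(\Gamma_1, x_1)$.

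The remaining, and main, step is to connect $(\Gamma_2, x_2')$ to $(\Gamma_2, x_2)$. Here $x_2'-x_2$ is a $1$-cocycle and the hypothesis that both represent the same framing implies its cohomology class lies in the kernel of the map sending $x$ to the homotopy class of $v_2(P_2,x)$. The main obstacle is to show that this kernel is, modulo $2$, generated by the local moves supplied by the integral H-move. The strategy is to argue that integral H-moves suffice to realize the addition of any coboundary $\delta y$ to $x_2'$ (by decomposing $y$ into a sum of $0$-cochains supported on single $0$-cells and applying one H-move per summand), and that the remaining ambiguity — the genuine cocycle contribution — reduces to zero modulo $2$, reflecting the fact that a $4\pi$-rotation of $v_2$ around $v_1$ is null-homotopic. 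Together with the first part, projecting the full integral sequence modulo $2$ then yields the required sequence of framed $0$-$2$, MP, and H-moves relating $(\Gamma_1, [x_1]_2)$ to $(\Gamma_2, [x_2]_2)$, completing the proof.
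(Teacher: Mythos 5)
This proposition is not proved in the paper at all: it is quoted verbatim from Benedetti--Petronio \cite[Theorem 1.4.3]{BP}, whose proof occupies a large part of their monograph (the obstruction-theoretic analysis of framings in their Chapter 7 together with the machinery of sliding moves used to realize diffeomorphisms and homotopies combinatorially). So there is no in-paper argument to compare yours against; the question is only whether your sketch would actually close the gap that \cite{BP} closes.

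It would not, as written. The first two steps (invariance of the framing under the framed moves, and lifting a sequence of $0$-$2$/MP moves supplied by Proposition \ref{BPcomb} to the integral level) are fine in outline. The genuine gap is in your final step. After running the lifted sequence you must compare two cochains $x_2'$ and $x_2$ on the \emph{same} graph $\Gamma_2$ whose difference is a $1$-cocycle, and you assert that, modulo the coboundaries realized by H-moves, ``the genuine cocycle contribution reduces to zero modulo $2$, reflecting the fact that a $4\pi$-rotation of $v_2$ around $v_1$ is null-homotopic.'' The $4\pi$-rotation argument only shows that cocycles with values in $2\mathbb{Z}$ act trivially on the framing --- that is exactly why the data reduces to $[x]_2\in C^1(\Gamma;\mathbb{Z}_2)$ in the first place --- but it says nothing about a cocycle whose mod-$2$ class is a nontrivial element of $H^1(P;\mathbb{Z}_2)$. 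Such classes generically act \emph{nontrivially} on framings (this is precisely why spin structures form an $H^1(M;\mathbb{Z}_2)$-torsor, cf.\ Section \ref{Section;spin}), so the claim cannot be justified by a local rotation argument; and when the two cochains do give equivalent framed manifolds via a nontrivial self-diffeomorphism or a homotopy that moves $v_1$ as well as $v_2$, their difference need not be a coboundary at all. Handling this requires the full reconstruction argument of \cite{BP}: one must show that every equivalence of framed manifolds (diffeomorphism plus homotopy through framings) is itself realized by a chain of the listed moves, not merely that the residual cochain ambiguity is a coboundary. As it stands, your last paragraph states the hard part of the theorem rather than proving it.
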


\subsection{Euler cochain $c_P$}\label{Eulercochain}
In the next section we will define pure sliding moves on framed normal o-graphs,
where the well-definedness  (Proposition \ref{IPS move and framing}) requires
arguments including the nature of the Euler cochain $c_P$. 
For this reason we explain $c_P$ in this section, see \cite{BP} for more details.

Let $P$ be a branched polyhedron. The Euler cochain of $P$ is given by  
\begin{align*}
    c_P = -\sum_i (1-n_i/2)\hat{\Delta}_i \in C^2(P;\mathbb{Z}),
\end{align*}
where $\hat{\Delta}_i\in C^2(P;\mathbb{Z})$ is the dual of a connected component ${\Delta}_i$ (2-cells) of $D(P)$, and the dots $n_i$ is the total number of solid dots as in Figure \ref{fig:c_p def} on the boundary of ${\Delta}_i$.

\begin{figure}[H]
    \centering
     \includegraphics[scale=0.8]{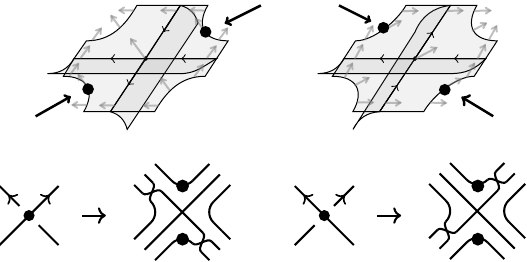}
        \caption{Solid dots on boundary of $\Delta_i$.}
    \label{fig:c_p def}
\end{figure}
 Let $v$ be the vector field on $S(P)$ tangent to $P$, defined around $V(P)$ by the gray arrows in Figure \ref{sencond_combing_around_vertices} (and in the top of Figure \ref{fig:c_p def}), while $v$ outside $V(P)$ is defined to not be tangent to $S(P)$.
The Euler cochain $c_P$ is constructed as the obstruction to extending $v$ to $P$ as follows: In Figure \ref{fig:c_p def}, the solid dots on ${\Delta}_i$ indicate points where $v$ is tangent to $\partial {\Delta}_i$. When counting the rotation number of $v$ on $\partial {\Delta}_i$ relative to the tangent vector of $\partial {\Delta}_i$, each time we pass through a dot, the vector $v$ rotates by $-1/2$. The total rotation number is thus $-n_i/2$. Consequently, the vector field $v$ extends to $2$-cells if and only if $-n_i/2=-1$ for each $i$, which is equivalent to $c_P=0$.
In general, for $x\in C^1(P; \mathbb{Z})$, let $v_x$ be the vector field on $S(P)$ obtained by rotating $v$ using $x$ as described in Section \ref{subsec: framing}. Then, $\delta x -c_P$ represents the obstruction to extending $v_x$ to $2$-cells.

\subsection{Integral pure sliding moves on integral normal o-graphs}\label{Section;IPS}
We introduce the \textit{integral pure sliding moves} I -- IV as in Figure \ref{the integral  pure sliding move}, where we can perform them only when the underlying normal o-graphs satisfy the conditions (PS-I)--(PS-IV), respectively.

\begin{figure}[H]
    \centering
    \includegraphics[scale=0.85]{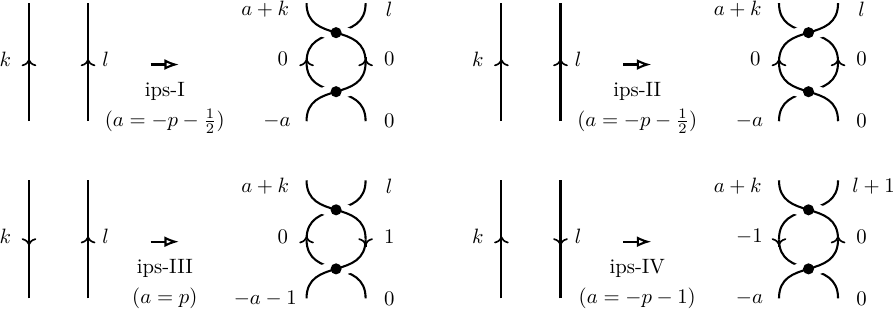}
    \caption{Integral pure sliding move ips-I -- ips-IV.}
    \label{the integral  pure sliding move}
\end{figure}

\noindent Here, $k$ and $l$ represent integer weights assigned to parts of edges. If there are multiple weights on an edge, the integer weight of the edge is the sum of these weights (for instance, the case when two open edges above are parts of one edge). Additionally, $p \in \mathbb{Z}/2$ is an integer or half-integer defined as follows.

Let $P$ be a branched spine of $M$, and $x \in C^1(P; \mathbb{Z})$ be a cochain representing a framing of $M$. Consider the $2$-cell $\Delta$ on which we perform the pure sliding move. The boundary $\partial \Delta$ is oriented as usual, traveling along edges of $P$ (potentially visiting one edge multiple times). We can then determine the rotation number of the second vector $v_2$ of framing on $\partial \Delta$ using $x \in C^1(P; \mathbb{Z})$, as depicted in the left image of Figure \ref{IPS move condi}.

\begin{figure}[H]
    \centering
    \includegraphics[scale=0.8]{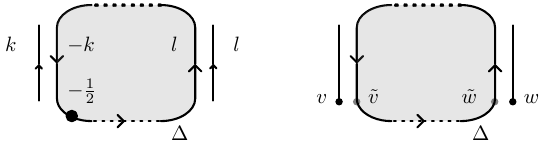}
   
   \if0 
     \begin{picture}(0,0)
    \put(-130,50){$k$ }
    \put(-100,50){$-k$ }
    \put(-50,50){$l$ }
    \put(-20,50){$l$ }
    \put(-100,28){$-\frac{1}{2}$ }
    \put(-50,7){$\Delta$ }
    \put(95,7){$\Delta$ }
    \put(20,25){$v$ }
    \put(120,25){$w$ }
    \put(45,25){$\tilde v$ }
    \put(90,25){$\tilde w$ }
    \end{picture}
    \fi
    \caption{How we count rotation number on $\Delta$ (left) and points $\tilde v$ and $\tilde w$ on $\partial \Delta$ (right).}
    \label{IPS move condi}
\end{figure}

\noindent 
Namely, when $\partial \Delta$ passes along an edge $e$ in the direction induced by $\Delta$, if $e$ is oriented in the same direction as $\partial \Delta$ proceeds, then the rotation number on that part is $x(e)$; otherwise, it is $-x(e)$.
When $\partial \Delta$ passes along a vertex, if there is a solid dot as shown in Figure \ref{fig:c_p def}, then we count $-1/2$.

Refer to the defining picture of ips moves in Figure \ref{the integral pure sliding move}. Let $e_1$ and $e_2$ be the left and right open edges, respectively, on which we perform an ips move. We can assume that on the edges where $e_1$ and $e_2$ reside, the second vector $v_2$ of the framing rotates $k$ and $l$ times inside $e_1$ and $e_2$, respectively, while outside it goes transversely to the edge from left to right following the edge's orientation (the opposite direction of the branching). Then, $p$ represents the rotation number of $v_2$ on $\partial \Delta$ from the bottom  of $e_1$ (positioned as shown in Figure \ref{the integral pure sliding move}) to the bottom of $e_2$. 

We can calculate $p$ as follows.  If the bottoms of $e_1$ and $e_2$ are connected without a vertex, then  $p=0$. If there are vertices, let $v$ and $w$ be the vertices (potentially the same vertex) that we encounter when we reach the bottom of $e_1$ and $e_2$, respectively, see the right image of Figure \ref{IPS move condi}. Let $\tilde{v}$ and $\tilde{w}$ be the points (again, they could be the same point) on $\partial \Delta$ attached to $v$ and $w$, respectively, at the bottom of the sliding region (the pre-image is not globally unique). Then, $p$ is the rotation number of $v_2$ on $\partial \Delta$ from $\tilde{v}$ to $\tilde{w}$, where if there is a solid dot at $\tilde{v}$ or $\tilde{w}$, we count $-1/2$ for each of them.

\begin{prop}\label{IPS move and framing}
Let $\Gamma$ and $\Gamma'$ be two framed  integral normal o-graphs which are related by the integral pure sliding moves.
Then the framed $3$-manifolds induced by $\Gamma$ and $\Gamma'$ are equivalent.
\end{prop}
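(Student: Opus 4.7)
Let $(\Gamma,x)$ and $(\Gamma',x')$ correspond to branched polyhedra $(P,x)$ and $(P',x')$, related by, say, ips-I (the cases ips-II--ips-IV being analogous with different sign conventions). The underlying pure sliding move relating $\Gamma$ and $\Gamma'$ is a particular $0$-$2$ move, hence by Proposition \ref{BPcomb} the combed $3$-manifolds $(\widehat M(P),\widehat v_1(P))$ and $(\widehat M(P'),\widehat v_1(P'))$ are equivalent. It therefore remains, by the description of framings in Section \ref{subsec: framing}, to verify two things: first, that the cochain condition $\delta x' = c_{P'}$ continues to hold on $P'$, so that $(\Gamma',x')$ really is a framed integral normal o-graph compatible with the prescribed labels; and second, that the resulting second vector $\widehat v_2(P',x')$ is homotopic (rel.\ the first vector) to $\widehat v_2(P,x)$.

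The sliding move creates a new bigon-shaped $2$-cell $\Delta$ in $P'$, while outside a small neighborhood of the sliding region $P$ and $P'$ agree and the cochain $x'$ is just $x$ (after routing the weights $k$ and $l$ onto the corresponding edges of the two strands). Consequently, $\delta x' - c_{P'}$ vanishes on every $2$-cell of $P'$ other than $\Delta$ by the hypothesis $\delta x = c_P$, and the only nontrivial verification is on $\Delta$ itself. I would decompose $\partial\Delta$ into the two ``vertical'' arcs running along the two new edges and the two ``horizontal'' arcs joining their endpoints through vertices that lie on the boundary of the sliding region. Using the recipe recalled in Section \ref{Eulercochain}, the vertical contributions read off the prescribed weights ($\pm k$ together with a contribution determined by $l$), the upper horizontal arc contributes $0$ because the top of the bigon is a single smooth corner created by the move, and the lower horizontal arc contributes exactly $p$ by the very definition of $p$ in Figure \ref{IPS move condi} (including the $-\tfrac{1}{2}$ corrections at dots corresponding to $\tilde v, \tilde w$).

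Summing these contributions and subtracting the dot contributions on $\partial\Delta$ coming from the vertices inside the sliding region gives $\delta x'(\partial\Delta) - c_{P'}(\Delta)$; the definition of $p$ is engineered precisely so that this vanishes, establishing $\delta x' = c_{P'}$. Once this cochain condition is in place, the obstruction theory in Section \ref{subsec: framing} allows us to extend $v_2$ over $\Delta$, and since $\pi_2(S^1)=\pi_3(S^1)=0$ this extension is unique up to homotopy. On the complement of $\Delta$, the second vector agrees with the one defined from $(P,x)$, so the two framings are homotopic, and therefore $\Phi_{\rm fram}(\Gamma,x) = \Phi_{\rm fram}(\Gamma',x')$.

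The main obstacle is the sign bookkeeping in the rotation-number computation on $\partial\Delta$: each of ips-I--ips-IV has a different pattern of edge orientations along the two strands and at the top/bottom of the bigon, which changes the signs with which $k$ and $l$ enter the sum and the location of the solid dots picked up in passing through vertices. I expect that organizing these four cases into a single uniform computation, by keeping track of the local branching structure via Method B of Section \ref{Section;CPS}, will reduce the verification to a small finite list of diagrams which can be checked directly; the global conditions (PS-I)--(PS-IV) are what guarantee that the bigon $\Delta$ is a well-defined $2$-cell in the first place, so that the rotation-number computation is meaningful.
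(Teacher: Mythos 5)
There is a genuine gap in your final step. Your plan establishes (i) that the underlying combed manifolds agree and (ii) that the cochain condition $\delta x' = c_{P'}$ holds on the new $2$-cells. But (ii) is essentially part of the hypothesis (both $\Gamma$ and $\Gamma'$ are assumed to be framed integral normal o-graphs), and in any case it only shows that $(\Gamma',x')$ determines \emph{some} framing, not that it determines the \emph{same} framing as $(\Gamma,x)$. The sentence ``on the complement of $\Delta$, the second vector agrees with the one defined from $(P,x)$, so the two framings are homotopic'' is precisely the assertion that needs proof, not a consequence of uniqueness of extensions. The new edges created by the move cut through the original $2$-cell on which the sliding is performed; the second vector determined by $(P,x)$ on that $2$-cell is only defined up to homotopy rel its old boundary, and one must show it can be arranged so that its rotation numbers along the new edges are exactly those prescribed by $x'$ (the weights involving $p$). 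Uniqueness of extensions over $2$-cells does not give this: the two framings are extensions of different $1$-skeleton data ($x$ on $S(P)$ versus $x'$ on $S(P')$), and two extensions over a surface rel only part of its boundary can differ by windings along the remaining arcs — so the two framings need not agree near the boundary of the sliding ball, which is what your ``agree outside $\Delta$, hence homotopic'' argument would require.

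This compatibility is exactly what the paper's proof supplies: it decomposes the original $2$-cell $\Delta$ into five disks $\Delta_1,\dots,\Delta_5$, writes down an explicit $v_2$ on $\Delta$ by prescribing rotation numbers on the boundaries of the pieces (checking each piece has total rotation number $-1$, so $v_2$ extends over it; for $\Delta_1$ and $\Delta_5$ this uses the definition of $p$ together with the global condition that the rotation number on $\partial\Delta$ is $-1$), and then realizes this $v_2$ as an honest framing near the sliding region via the ``almost horizontal'' planar projection. This single representative serves both $(\Gamma,x)$ and $(\Gamma',x')$ at once. Note also Remark \ref{nonlocal}: your hope of reducing the verification to ``a small finite list of diagrams which can be checked directly'' is exactly what fails here, since the framing near the sliding region depends on the framing on the parts of $\partial\Delta_1$ and $\partial\Delta_5$ lying outside the sliding region. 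A minor additional point: a general pure sliding move is not ``a particular $0$-$2$ move'' — the inclusion goes the other way (the $0$-$2$ move is a particular pure sliding move) — so the equivalence of the underlying combed manifolds should be attributed to the sliding-move calculus of \cite{BP} rather than deduced from Proposition \ref{BPcomb}.
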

\begin{proof}We prove the assertion for ips-I. For the other moves the proof is similar.

 Let $(\Gamma,x)$ be a framed integral normal o-graph and $(\Gamma',x')$ be the integral normal o-graph obtained from $(\Gamma,x)$ by ips-I. Let $\Delta$ be the $2$-cell on which we perform ips-I.
We first divide $\Delta$ into five disks $\Delta_1$--$\Delta_5$ as the left picture  in Figure \ref{Decompose the disk}. Then we arrange the second vector $v_2$ of the framing on $\Delta$ by homotopy which is the identity on the boundary, as in the right picture, where the integer of each edge represents the rotation number relative to the orientation of the boundary of the disks, and each white dot represents the tangency points which has the rotation number $-1/2$. We can do this because on each disk the total rotation number on the boundary is $-1$: for $\Delta_2$, $\Delta_3$, and $\Delta_4$, this follows by straightforward calculation; for $\Delta_5$, it follows from the definition of $a=-p-1/2$; and for $\Delta_1$,  it follows from the definition of $a$ and the equation derived from the initial condition that the rotation number on $\delta \Delta$ is $-1$.

\begin{figure}[H]
    \centering
    \includegraphics[scale=0.8]{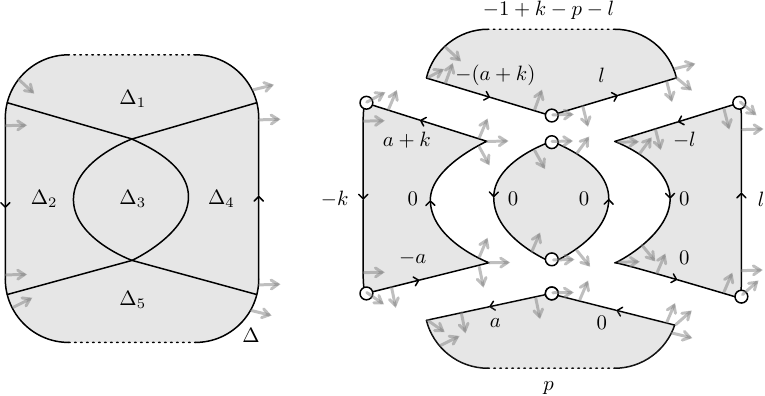}
    \caption{Decomposition of $\Delta$ (left) and second vector $v_2$ of  framing on $\Delta$ (right).}
    \label{Decompose the disk}
\end{figure}

Let $M$ be the framed $3$-manifold obtained by $(\Gamma,x)$. Near the sliding region, we consider a planar figure, and we assume that the branched spine is almost horizontal so that the second vector $v_2$ of the framing is obtained by the pullback of the projection to the horizontal plane. Under this setting, we can pull back $v_2$ as depicted in the right picture in Figure \ref{Decompose the disk}, thus defining the framing of $M$ near the sliding region. Note that this framing is a representative for both the homotopy classes of framings obtained from $(\Gamma,x)$ and $(\Gamma',x')$. Thus, we have the assertion.
\end{proof}
\begin{rem}\label{nonlocal}
In \cite[Section 7.3]{BP}, the authors demonstrated results similar to Proposition \ref{IPS move and framing}, showing that the framed $0$-$2$ move and the framed MP move do not alter the framing of closed $3$-manifolds. They illustrated local framing changes near the sliding regions explicitly by projecting them onto a plane \cite[Figure 7.7--7.9]{BP}. This is feasible because they can assume the cochain $x \in C^1(P, \mathbb{Z})$ affects regions far from the sliding regions, allowing to take an explicit framing near sliding regions.
However, in our case, this approach is not applicable in Figure \ref{Decompose the disk} because the framing near the sliding region depends on the framing on the boundary of $\Delta_1$ and $\Delta_5$ out of the sliding region.
\end{rem}

\subsection{Main results for framed $3$-manifolds}\label{Section;Main_results_framing}
We define the primary integral MP move as follows.
\begin{figure}[H]
    \centering
    \includegraphics[scale=0.7]{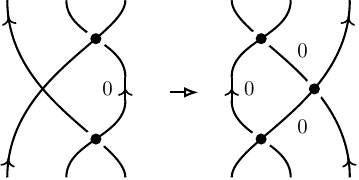}
    \caption{Primary integral MP move.}
\end{figure}

\begin{thm}\label{thZ}
Each of the 16 integral MP move in Figure \ref{fig:MP-move} is derived as a sequence of the primary integral MP move, the integral pure sliding moves,  the integral H-move and their inverses.
\end{thm}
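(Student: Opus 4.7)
The plan is to upgrade the proof of Theorem \ref{th1} to the integral level, using the integral H-move to reconcile any discrepancy in the edge weights at the end of the sequence. Given any one of the 16 integral MP moves, forgetting the weights produces one of the 16 (non-integral) MP moves. By Theorem \ref{th1}, that underlying move admits a factorization as a sequence $S$ of the primary MP move, pure sliding moves, and their inverses. I would first lift $S$ to a sequence $\tilde S$ of integral moves by propagating the given input weights through each step according to the weight rules of the primary integral MP move and the integral pure sliding moves in Figures \ref{fig:MP-move} and \ref{the integral pure sliding move}. Each step in $\tilde S$ is legal because the combinatorial conditions (PS-I)--(PS-IV) depend only on the underlying normal o-graph, not on the weights.

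The output of $\tilde S$ is an integral normal o-graph $(\Gamma_1, x_1)$ whose underlying $\Gamma_1$ coincides with the underlying normal o-graph $\Gamma_2$ produced by the prescribed integral MP move, but whose weight $x_1$ need not coincide with the prescribed weight $x_2$. By Proposition \ref{IPS move and framing} and the analogous statement for the primary integral MP move (which is a particular case of the integral MP move of type D2 in \cite{BP, MST2}), the sequence $\tilde S$ preserves the framing of the associated $3$-manifold; likewise, the integral MP move itself preserves the framing. Hence $(\Gamma_1, x_1)$ and $(\Gamma_2, x_2)$ represent equivalent framed $3$-manifolds on the same underlying normal o-graph, so the $1$-cochains $x_1$ and $x_2$ both satisfy $\delta x_i = c_{P}$ and therefore differ by a $1$-cocycle on $P$.

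The next step is to realize this cocycle difference by a sequence of integral H-moves. The integral H-move of Figure \ref{fig:H-move} shifts the weight on an edge by an integer that can be absorbed into the adjacent $2$-cells without altering $\delta x - c_P$; chaining such shifts along edges of $P$ one can realize any coboundary, and, since $P$ is a spine of an oriented $3$-manifold obtained from $\Gamma_1 = \Gamma_2$, any cocycle representing an element of $H^1(P;\mathbb{Z}) \cong H^1(\widehat{M};\mathbb{Z})$ is already accounted for by the equivalence of framings (two cochains with the same coboundary that define homotopic framings differ by a coboundary up to the H-move relations, as is implicitly used in \cite{BP} and recalled in Section \ref{sec:BP-diagrams}). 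Applying these H-moves at the end of $\tilde S$ produces $(\Gamma_2, x_2)$, completing the derivation.

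The main obstacle is the bookkeeping in the last step: one must check that the discrepancy $x_1 - x_2$ can indeed be written as a sum of elementary weight shifts of the form appearing in Figure \ref{fig:H-move}. I would handle this by reducing to the generators of $C^1(P;\mathbb{Z})$, showing that pushing a unit weight around a single $2$-cell is exactly an H-move (this is the content of the framing invariance of H in \cite{BP}) and that any $1$-coboundary is a sum of such elementary pushes. Once this is verified, together with the fact that homotopic framings on the same spine give cohomologous Euler cochain solutions, the combination of the lifted sequence $\tilde S$ and the corrective H-moves realizes the prescribed integral MP move, yielding Theorem \ref{thZ}.
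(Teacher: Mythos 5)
Your strategy diverges from the paper's. The paper's proof of Theorem \ref{thZ} is a direct computation: re-run the derivation sequences from the proof of Theorem \ref{th1} (Figures \ref{MPPS1} and \ref{Proof of Main theorem}) with integer weights attached, and check step by step that the weights produced by the composite of primary integral MP moves, integral pure sliding moves and integral H-moves coincide exactly with those prescribed in Figure \ref{fig:MP-move}. You instead lift the unweighted sequence and try to repair the resulting weight discrepancy $x_1-x_2$ a posteriori via framing invariance and H-moves, and this repair step has genuine gaps. First, the claim that $x_1$ and $x_2$ both satisfy $\delta x_i=c_P$ presupposes a closed, framed ambient integral normal o-graph, whereas the theorem must produce the move with the exact local weights of Figure \ref{fig:MP-move} so that it can be applied in any context (in particular as a relation among integral normal o-tangles, which is how it is used for the invariant $Z$). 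Second, and more seriously, even granting that $x_1-x_2$ is an integral cocycle, you cannot conclude that it is realizable by H-moves: H-moves realize coboundaries, and your dismissal of a possible nontrivial class in $H^1(P;\mathbb{Z})$ as ``already accounted for by the equivalence of framings'' does not hold up. Homotopy of the induced framings only constrains the mod~$2$ class $[x_1-x_2]_2$, and says nothing about whether the integral cocycle $x_1-x_2$ is an integral coboundary (it could, for instance, be twice a generator of $H^1(P;\mathbb{Z})$); ruling this out requires precisely the weight computation you are trying to avoid. Moreover, the implication ``equivalent framed $3$-manifolds on the same spine $\Rightarrow$ related by H-moves'' is essentially the hard direction of Proposition \ref{BPfram}, which is stated at the $\mathbb{Z}_2$ level and allows framed $0$-$2$ and MP moves in addition to H-moves, so invoking it here is circular.

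There is also a locality problem: even where your repair succeeds, the corrective H-moves need not be supported inside the move region, so the composite sequence would alter weights on edges that the integral MP move leaves untouched; you would then have derived only some move with the same global effect on framed $3$-manifolds, not the move of Figure \ref{fig:MP-move}. The fix is to do what the paper does: propagate the weights explicitly through each sequence in Figures \ref{MPPS1} and \ref{Proof of Main theorem}, using the weight rules of the primary integral MP move and of the integral pure sliding moves (where the quantity $p$ is determined by the local configuration in each case), and insert integral H-moves exactly where needed so that the final weights match. This is tedious but entirely local, and it leaves nothing to global cohomology.
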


\begin{proof}[ Proof of Theorem \ref{thZ}]
The proof follows a similar path to the proof of Theorem \ref{th1}, once integer weights are attached. The detailed proof is left to the readers.
\end{proof} 

By taking modulo $2$ on each integer weight, we have the following.
\begin{cor}\label{thZ2}
Each of the 16 framed MP move is derived as a sequence of the primary framed MP move, the  framed pure sliding  moves, the framed H-move and their inverses.
\end{cor}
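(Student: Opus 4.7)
The plan is to reduce Corollary \ref{thZ2} to Theorem \ref{thZ} via the projection $[\cdot]_2 \co C^1(\Gamma;\mathbb{Z}) \to C^1(\Gamma;\mathbb{Z}_2)$. Recall that by definition a framed normal o-graph is the image $(\Gamma,[x]_2)$ of a framed integral normal o-graph $(\Gamma,x)$, and the framed $0$-$2$, MP and H-moves are by construction the projections of the corresponding integral moves. Since the framed pure sliding and primary framed MP moves are likewise defined as the mod $2$ reductions of their integral counterparts, the projection sends any sequence of integral moves to a sequence of the analogous framed moves.

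Given a framed MP move $m \co (\Gamma, [x]_2) \to (\Gamma', [x']_2)$, I would first lift it to an integral MP move $\tilde m \co (\Gamma, x) \to (\Gamma', x')$ by choosing any integer cochain $x$ with $[x]_2 = [x]_2$ and letting $x'$ be the cochain produced by the integral MP move rule in Figure \ref{fig:MP-move}; by construction $[x']_2$ equals the cochain output by the framed MP move. By Theorem \ref{thZ}, $\tilde m$ can be written as a composition
\begin{equation*}
\tilde m = \tilde m_N \circ \tilde m_{N-1} \circ \cdots \circ \tilde m_1,
\end{equation*}
where each $\tilde m_i$ is either the primary integral MP move, an integral pure sliding move, the integral H-move, or the inverse of one of these. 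Applying the mod $2$ projection to this decomposition yields
\begin{equation*}
m = [\tilde m_N]_2 \circ [\tilde m_{N-1}]_2 \circ \cdots \circ [\tilde m_1]_2,
\end{equation*}
and by definition each $[\tilde m_i]_2$ is the corresponding framed move (or its inverse). This gives the required expression for $m$.

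The only thing to check carefully is that the projection does not destroy the hypotheses of any of the moves in the sequence. The combinatorial conditions (PS-I)--(PS-IV) for performing a pure sliding move depend only on the underlying normal o-graph $\Gamma$ via its circuit diagram (Method B), and not on the weights; hence they pass unchanged from the integral setting to the framed setting. The integer $p \in \mathbb{Z}/2$ appearing in the ips move is defined purely combinatorially from the cochain $x$ on $\partial \Delta$, and its class modulo $2$ depends only on $[x]_2$, so the rule for updating weights under an ips move descends to a well-defined rule on $\mathbb{Z}_2$-weighted graphs. Analogous compatibility holds for the primary integral MP move and for the integral H-move, whose weight-update rules are $\mathbb{Z}$-linear. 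Thus no step of the proof presents a substantive obstacle; the entire argument is essentially the naturality of the construction $\Phi_{\mathrm{fram}}$ with respect to the coefficient surjection $\mathbb{Z}\twoheadrightarrow \mathbb{Z}_2$, so the authors' instruction to ``take modulo $2$ on each integer weight'' is fully justified.
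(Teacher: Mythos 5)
Your proposal is correct and takes essentially the same route as the paper, which proves Corollary \ref{thZ2} simply by reducing the integral weights in Theorem \ref{thZ} modulo $2$; you merely spell out the lifting of a framed move to an integral one and the compatibility of the conditions (PS-I)--(PS-IV) and of $p$ with the projection, which the paper leaves implicit. (Minor slip: the phrase ``any integer cochain $x$ with $[x]_2=[x]_2$'' should read ``any integer lift of the given $\mathbb{Z}_2$-cochain,'' and such a lift satisfying $\delta x = c_P$ exists by the very definition of a framed normal o-graph.)
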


Note that the integral $0$-$2$ move is an integral pure sliding move.
By Proposition \ref{BPfram}, Proposition \ref{IPS move and framing} and Theorem \ref{thZ2}, we have the following representation of framed $3$-manifolds.
\begin{cor}\label{cofram}
The equivalence relation defined by the surjective map
$\Phi_{\rm fram} \co \mathcal{G}_{\rm fram} \to \mathcal{M}_{\rm fram}$
is generated by the primary framed MP move, the framed pure sliding moves and the framed H-move.
\end{cor}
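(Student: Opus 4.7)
The plan is to deduce Corollary \ref{cofram} by combining three ingredients that are already in place: the Benedetti-Petronio generating set in Proposition \ref{BPfram}, the preservation of framing under integral pure sliding moves in Proposition \ref{IPS move and framing}, and the reduction of the 16 framed MP moves in Corollary \ref{thZ2}.

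First, I would check that each of the three types of moves in the proposed generating set is actually a well-defined move on $\mathcal{G}_{\rm fram}$, i.e., its two sides represent equivalent framed $3$-manifolds under $\Phi_{\rm fram}$. For the framed pure sliding moves, this is Proposition \ref{IPS move and framing} composed with the projection $C^1(P;\mathbb{Z})\to C^1(P;\mathbb{Z}_2)$. For the primary framed MP move, this follows because it is the mod-$2$ reduction of the primary integral MP move, which is an instance of an integral MP move and thus framing-preserving by the results recalled in Section~\ref{sec:BP-diagrams}; alternatively, it is a special case of the framed MP move of type D2, which is framing-preserving by Proposition \ref{BPfram}. The framed H-move is framing-preserving by definition.

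Next, I would invoke Proposition \ref{BPfram}, which states that the equivalence relation defined by $\Phi_{\rm fram}$ is generated by the framed $0$-$2$ move, the 16 framed MP moves, and the framed H-move. To replace this generating set by the proposed smaller one, I would show that every generator of Proposition \ref{BPfram} is a composition of the proposed moves. The framed $0$-$2$ move is itself a framed pure sliding move (being the mod-$2$ reduction of the integral $0$-$2$ move, which, as noted just before Corollary \ref{cofram}, is an integral pure sliding move). The 16 framed MP moves are handled precisely by Corollary \ref{thZ2}, which expresses each of them as a sequence of the primary framed MP move, framed pure sliding moves, the framed H-move, and their inverses. The framed H-move is already among the proposed generators.

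Combining these observations, the equivalence relation generated by the primary framed MP move, the framed pure sliding moves, and the framed H-move coincides with the one generated by the Benedetti-Petronio moves, which in turn equals the equivalence relation defined by $\Phi_{\rm fram}$. I do not foresee a real obstacle: the only subtle point is verifying that the mod-$2$ reduction of the derivations in Theorem \ref{thZ} remains meaningful, and this is immediate because all moves in that theorem descend to framed moves through the projection $[\cdot]_2$, and the condition $\delta x = c_P$ is preserved at the integral level hence also at the $\mathbb{Z}_2$ level.
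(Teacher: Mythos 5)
Your proposal is correct and follows essentially the same route as the paper: the paper derives Corollary \ref{cofram} directly from Proposition \ref{BPfram}, Proposition \ref{IPS move and framing}, and Corollary \ref{thZ2}, together with the observation that the integral (hence framed) $0$-$2$ move is a pure sliding move. The extra well-definedness checks you include are sound and only make explicit what the paper leaves implicit.
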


\subsection{Local integral pure sliding moves and refinement of Theorem \ref{thZ}} \label{integrallocalPS}
In this section we give a refinement of Theorem \ref{thZ}, which generalizes Proposition \ref{localth1}.
The integral pure sliding moves are local in the sense that the normal o-graph does not change outside the sliding region. However, they are not local in the sense that we need global information about the $2$-cells to perform the moves (as with pure sliding moves on normal o-graphs), and moreover, the resulting cochain also depends on the global information of the original cochain, as noted in Remark \ref{nonlocal}.

We define \textit{local integral pure sliding moves} by attaching integer weights to the 40 local pure sliding moves defined in Section \ref{localPS}. 
\begin{cor}
Each integral MP move is derived as a sequence of the primary integral MP move and 15 local integral pure sliding moves and their inverses.
\end{cor}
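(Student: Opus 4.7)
The plan is to lift the refinement of Proposition \ref{localth1} to the integral setting, mirroring how Theorem \ref{thZ} lifts Theorem \ref{th1}. The key intermediate step is to establish an integral version of Proposition \ref{localth1}: there is a set of 18 local integral pure sliding moves such that each integral MP move is derived as a sequence of any integral MP move together with 15 of these local moves and their inverses. Here the 18 candidate local integral pure sliding moves are precisely the 18 arrows selected in Figure \ref{Relations among MP moves3}, now carrying integer weights.

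To prove this integral analog, I would attach integer weights to every edge in the diagrams underlying Figure \ref{Proof of Main theorem} and follow the same pattern of arrows shown in Figure \ref{Relations among MP moves3}. For each local pure sliding move appearing in a derivation, the parameter $p \in \mathbb{Z}/2$ from Figure \ref{IPS move condi} is determined by the surrounding integer weights and vertex configuration inside the short local derivation sequence (for instance, as in Figure \ref{D2toA2} with weights attached), so each step becomes a well-defined local integral pure sliding move. The remaining integer weights propagate consistently through each derivation, exactly as in the proof of Theorem \ref{thZ}. Once this integral analog is in hand, the corollary follows: since the primary integral MP move is equivalent to the integral MP move D2 (by the integer-weighted version of Figure \ref{D2 and primary MP}), using D2 as the pivot move in the integral version of Proposition \ref{localth1} produces all 16 integral MP moves from the primary integral MP move and 15 local integral pure sliding moves, together with their inverses.

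The main obstacle I expect is confirming that no integral H-move is required in these derivations, in contrast with Theorem \ref{thZ} where the H-move is listed explicitly. This should follow from the locality of each chosen derivation path: since every local integral pure sliding move modifies integer weights only inside a small region determined by the sliding and by the locally computed $p$, the cumulative effect of a short sequence of such moves matches the target integral MP move exactly on edges, leaving no residual weight mismatch that would require an integral H-move to correct. Verifying this cleanly amounts to checking, for each of the 15 paths selected out of Figure \ref{Relations among MP moves3}, that the induced weight transformation on the boundary of the local region coincides with that of the corresponding integral MP move, which reduces to a bookkeeping of $p$ values along the derivation.
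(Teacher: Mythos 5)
Your proposal follows essentially the same route as the paper, whose entire proof of this corollary is the single remark that it is ``similar to that of Proposition \ref{localth1}''; your plan of lifting that proposition to the integral setting by attaching integer weights along the derivation paths of Figure \ref{Relations among MP moves3} is exactly what is intended. Your explicit concern about whether the integral H-move can be avoided (it appears in Theorem \ref{thZ} but not in this corollary) is a legitimate subtlety that the paper's one-line proof does not address, and your proposed resolution via local bookkeeping of the parameter $p$ is the right thing to check.
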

\begin{proof}
The proof is similar to that of Proposition \ref{localth1}. 
\end{proof}

\section{Spin normal o-graphs and spin $3$-manifolds}\label{Section;spin}
A spin structure of a $3$-manifold with a standard spine $P$ can be thought of as a homotopy class of framing over the 1-skeleton that extends over the $2$-skeleton.
Let $\Gamma$ be a closed normal o-graph. 
Spin structures on $M(\Gamma)$ correspond bijectively  to cochains $z\in C^1(P(\Gamma), \mathbb{Z}_2)$ such that $\partial z = (c_p)_2\in C^2(P(\Gamma), \mathbb{Z}_2)$, where two cochains are viewed as equivalent if they differ by a $1$-coboundary \cite[Proposition 7.4.2]{BP}. 
A  \textit{{spin normal o-graphs}} is a pair $(\Gamma, y)$ of a normal o-graph $\Gamma$ and  $y \in C^1(\Gamma, \mathbb{Z}_2)$ such that $\partial y =(c_p)_2$. We denote by $\mathcal{G}_{\rm spin}$ the set of spin normal o-graphs.
Thus, we have a surjective map
\begin{align*}
\Phi_{\rm spin} \co \mathcal{G}_{\rm spin} \to \mathcal{M}_{\rm spin}.
\end{align*}
We define the \textit{framed $0$-$2$ move}, the \textit{framed MP moves}, the \textit{framed H-move} on $\mathcal{G}_{\rm spin}$ in a similar way to these on $\mathcal{G}_{\rm fram}$.

\begin{prop}[{Benedetti-Petronio \cite[Theorem 1.4.4]{BP} }]\label{BPspin}
The equivalence relation defined by  $\Phi_{\rm spin}$ is generated by the framed $0$-$2$ move, the framed MP moves, the framed H-moves, and the \textit{framed CP move} defined in \cite[Figure 1.9]{BP}.
\end{prop}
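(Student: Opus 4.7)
The plan is to reduce Proposition \ref{BPspin} to Proposition \ref{BPfram} by analyzing the fibers of the natural forgetful map from framings to spin structures. First, I would verify that each of the four listed moves induces a well-defined operation on spin normal o-graphs preserving the equivalence class of spin $3$-manifolds. For the framed $0$-$2$, framed MP, and framed H-moves this is immediate: each descends from the corresponding integral move, which preserves the framing (Proposition \ref{IPS move and framing} and its analogues for the other moves), and the spin structure is just the mod $2$ reduction of the framing data. For the framed CP move, I would check directly from its definition in \cite[Figure 1.9]{BP} that, although the combing (the first vector $v_1$) may change, the induced spin structure does not.

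For the converse direction, suppose $(\Gamma, y)$ and $(\Gamma', y')$ are spin normal o-graphs with $\Phi_{\rm spin}(\Gamma, y) = \Phi_{\rm spin}(\Gamma', y')$. I would lift each $\mathbb{Z}_2$-cochain to an integer cochain $x \in C^1(\Gamma; \mathbb{Z})$ satisfying $\delta x = c_{P(\Gamma)}$; such a lift exists because the condition $\delta y = (c_{P(\Gamma)})_2$ ensures that $c_{P(\Gamma)}$ admits an integral coboundary compatible with $y$. Then $(\Gamma, x)$ and $(\Gamma', x')$ are framed integral normal o-graphs whose induced framings may differ even though their underlying spin structures agree. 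By Proposition \ref{BPfram} the framed representatives can be connected by framed $0$-$2$, framed MP, and framed H-moves exactly when they represent equivalent framings; the residual discrepancy, when they merely induce the same spin structure on the same closed $3$-manifold, corresponds to an element in the kernel of $H^1(M;\mathbb{Z}) \to H^1(M;\mathbb{Z}_2)$.

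The remaining step is to realize this discrepancy combinatorially by a sequence of framed CP moves. In the unframed setting, the CP move is precisely the move that changes the combing while preserving the closed $3$-manifold, as used in the passage from Proposition \ref{BPcomb} to Proposition \ref{BPclosed}. Its framed counterpart in \cite[Figure 1.9]{BP} is calibrated so that the effect on the mod $2$ cochain leaves the spin structure invariant. An argument parallel to the derivation of Proposition \ref{BPclosed} from Proposition \ref{BPcomb}, refined at the level of $\mathbb{Z}_2$-cochains, then shows that any two framings inducing the same spin structure on a fixed $3$-manifold can be connected via framed CP moves modulo the framing-preserving moves.

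The main obstacle I expect is identifying the framed CP move with the full kernel of the framing-to-spin projection. This requires both a local verification that the move acts correctly on the Euler cochain $c_P$ after reduction mod $2$, and a global argument showing that iterations of framed CP moves generate the entire $H^1(M;\mathbb{Z}_2)$-torsor of spin-equivalent framings. Provided these ingredients go through as in \cite{BP}, Proposition \ref{BPspin} follows by assembling them with Proposition \ref{BPfram}.
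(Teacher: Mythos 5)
The paper does not prove this statement: Proposition \ref{BPspin} is quoted verbatim from Benedetti--Petronio \cite[Theorem 1.4.4]{BP} and is used as a black box (its only role here is as input to Corollary \ref{cospin}). So there is no in-paper proof to compare against, and a blind reconstruction was not actually required. That said, your overall strategy --- reduce to Proposition \ref{BPfram} via the forgetful map from framings to spin structures and let the framed CP move absorb the fiber --- is the right skeleton and matches how Benedetti--Petronio organize their Chapter 7 argument.

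There is, however, a concrete error in the step you yourself flag as the main obstacle. The discrepancy between two framings of a closed oriented $3$-manifold inducing the same spin structure is \emph{not} an element of $\ker\bigl(H^1(M;\mathbb{Z})\to H^1(M;\mathbb{Z}_2)\bigr)$. Two framings induce the same spin structure exactly when the primary obstruction to a homotopy between them, which lives in $H^1(M;\pi_1(SO(3)))\cong H^1(M;\mathbb{Z}_2)$, vanishes; the residual discrepancy is then the secondary obstruction in $H^3(M;\pi_3(SO(3)))\cong\mathbb{Z}$, i.e.\ the Pontryagin invariant --- which is precisely why the move is called the (Combinatorial) Pontryagin move. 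Combinatorially, on a fixed spine the relevant ambiguity is twofold: spin structures are cochains $y$ with $\delta y=(c_P)_2$ taken \emph{modulo coboundaries} $B^1(P;\mathbb{Z}_2)$ (as recalled in Section \ref{Section;spin}), whereas framed normal o-graphs carry the cochain on the nose, and in addition the first vector $v_1$ of the framing may change. The framed CP move must be shown to generate the coboundary action together with the change of Pontryagin class, not a kernel inside $H^1$. Also note that the reduction is cleaner than your lifting step suggests: by Proposition \ref{BPfram} and the fact that $\Phi_{\rm fram}(\Gamma,x)$ depends only on $[x]_2$, one can work directly with $\mathbb{Z}_2$-cochains throughout and never needs an integral lift with $\delta x=c_P$.
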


We define framed pure sliding moves on $\mathcal{G}_{\rm spin}$ in a similar way to these on $\mathcal{G}_{\rm fram}$.
\begin{prop}\label{IPS move and spin}
Let $(\Gamma ,y)$ and $(\Gamma', y')$ be two spin normal o-graphs which are related by the framed pure sliding moves.
Then the induced spin $3$-manifolds are equivalent.
\end{prop}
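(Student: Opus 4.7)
The plan is to reduce the spin statement to the framed (integer) case already established in Proposition~\ref{IPS move and framing}. The key observation is that, by the definitions in Section~\ref{Section;spin}, the framed pure sliding moves on $\mathcal{G}_{\rm spin}$ are the mod-$2$ reductions of the integral pure sliding moves on framed integral normal o-graphs, while the underlying spin structure of a framing depends only on the homotopy class of that framing.

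First I would lift the given spin normal o-graph $(\Gamma, y)$ to a framed integral normal o-graph $(\Gamma, x)$ with $\delta x = c_P$ and $[x]_2$ cohomologous to $y$ in $C^1(\Gamma;\mathbb{Z}_2)$, so that $(\Gamma, [x]_2)$ and $(\Gamma, y)$ represent the same spin structure on $\widehat M(\Gamma)$. The existence of such an $x$ follows from the classical parallelizability of closed oriented $3$-manifolds: every spin structure is the reduction of some framing, and combining this with the surjectivity of $\Phi_{\rm fram}$ (keeping the underlying closed normal o-graph $\Gamma$ fixed) yields the desired integer cochain.

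Next I would lift the framed pure sliding move $(\Gamma, y) \to (\Gamma', y')$ to an integral pure sliding move $(\Gamma, x) \to (\Gamma', x')$ by choosing a specific value of the half-integer parameter $p \in \mathbb{Z}/2$ compatible with $x$ as in Section~\ref{Section;IPS}. By Proposition~\ref{IPS move and framing}, the framed $3$-manifolds $\Phi_{\rm fram}(\Gamma, x)$ and $\Phi_{\rm fram}(\Gamma', x')$ are equivalent. Passing to the underlying spin structures, we obtain $\Phi_{\rm spin}(\Gamma, [x]_2) \cong \Phi_{\rm spin}(\Gamma', [x']_2)$, and since $[x]_2$ and $[x']_2$ are cohomologous to $y$ and $y'$ respectively, this yields $\Phi_{\rm spin}(\Gamma, y) \cong \Phi_{\rm spin}(\Gamma', y')$, as desired.

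The main technical point will be verifying that the mod-$2$ reduction of the integral move on $(\Gamma, x)$ indeed produces the given move on $(\Gamma, y)$ up to $\delta(C^0(\Gamma';\mathbb{Z}_2))$; equivalently, that the chosen lift of $p$ reduces consistently to the $\mathbb{Z}_2$-parameter governing the framed pure sliding move. This is essentially bookkeeping, but requires careful tracking of the half-integer contributions from the tangency dots along $\partial \Delta$ in the definition of $p$, together with the observation (already used in Proposition~\ref{IPS move and framing}) that the construction of $x'$ from $x$ is canonical once the global data on $\partial \Delta$ is fixed.
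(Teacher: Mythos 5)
Your reduction to Proposition \ref{IPS move and framing} founders at the very first step: the integer lift you need does not exist in general. For a fixed closed normal o-graph $\Gamma$, the cochains $x$ with $\delta x = c_P$, taken modulo coboundaries, form a torsor over $H^1(\widehat M(\Gamma);\mathbb{Z})$, and they all describe framings whose first vector is the combing $\widehat v(\Gamma)$; consequently the spin structures they induce form a single coset of the image of $H^1(\widehat M;\mathbb{Z})\to H^1(\widehat M;\mathbb{Z}_2)$ inside the set of spin structures, which is a torsor over $H^1(\widehat M;\mathbb{Z}_2)$. This reduction map fails to be surjective whenever $H_1(\widehat M;\mathbb{Z})$ has $4$-torsion --- for $L(4,1)$ one has $H^1(M;\mathbb{Z})=0$ but $H^1(M;\mathbb{Z}_2)=\mathbb{Z}_2$ --- so there are spin normal o-graphs $(\Gamma,y)$ admitting no framed integral normal o-graph $(\Gamma,x)$ with $[x]_2$ cohomologous to $y$. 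Your appeal to parallelizability does not repair this: every spin structure is indeed induced by some global framing, but that framing need not extend the particular combing $\widehat v(\Gamma)$, hence need not be encoded by an integer cochain on the same $\Gamma$; and $\Phi_{\rm fram}$ is surjective only over all of $\mathcal{G}_{\rm fram}$, not with $\Gamma$ held fixed. At the cochain level, an integer lift $\tilde y$ of $y$ satisfies $\delta\tilde y - c_P = 2z$ for a cocycle $z$ whose class in $H^2(P;\mathbb{Z})$ is $2$-torsion but need not vanish, and $[z]=0$ is exactly the condition for your lift to exist.

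The paper's proof avoids the problem by staying local. Exactly as in the proof of Proposition \ref{IPS move and framing}, one decomposes the $2$-cell $\Delta$ on which the move is performed into the disks $\Delta_1$--$\Delta_5$ and writes down a framing on $\Delta$ whose first vector is $v(P)$ and whose second vector is dictated on the boundaries of the $\Delta_i$ by the rotation numbers of Figure \ref{Decompose the disk}, now read in $\mathbb{Z}_2$ (using \cite[Lemma 7.4.1]{BP}, which lets one represent a spin structure near the sliding region by such a framing over the $2$-skeleton); this single local framing is a common representative of the spin structures determined by $(\Gamma,y)$ and $(\Gamma',y')$. No global integer lift is ever invoked. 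If you want to keep your global strategy, you would have to either restrict to spin structures lying in the image of the reduction from framings extending $\widehat v(\Gamma)$, or rerun the argument of Proposition \ref{IPS move and framing} directly with $\mathbb{Z}_2$-coefficients --- which is in effect what the paper does.
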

\begin{proof}
Using the similar assumption of the proof of Proposition \ref{IPS move and framing}, we can represent each spin structure near the sliding region by a framing on $\Delta$ whose first vector on the boundary of the disks $\Delta_1$--$\Delta_5$ is $v_1=v(P)$ (cf. \cite[Lemma 7.4.1]{BP}) and the second vector on the boundary of the disks $\Delta_1$--$\Delta_5$ is given by the rotation number on each edge as in Figure \ref{Decompose the disk}, where the integer weights are considered in $\mathbb{Z}_2$.
This framing is a representative for the spin structures obtained from  $(\Gamma,y)$ and $(\Gamma',y')$. Thus, we have the assertion.

\end{proof}

\begin{cor}\label{cospin}
The equivalence relation defined by the surjective map
$\Phi_{\rm spin} \co \mathcal{G} \to \mathcal{M}_{\rm spin}$
is generated by the primary framed MP move, the framed pure sliding move, the framed H-move and  the framed CP move.
\end{cor}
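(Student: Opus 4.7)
The plan is to reduce Corollary \ref{cospin} to the results already established for framed normal o-graphs, together with Proposition \ref{BPspin} and Proposition \ref{IPS move and spin}. First, I would invoke Proposition \ref{BPspin} to observe that the equivalence relation defined by $\Phi_{\rm spin}$ is generated by the framed $0$-$2$ move, the 16 framed MP moves, the framed H-move, and the framed CP move. The task therefore reduces to showing that, modulo the framed CP move and the framed H-move, each framed MP move and the framed $0$-$2$ move can be produced from the primary framed MP move and the framed pure sliding moves.

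Next, I would apply Corollary \ref{thZ2}, which expresses each of the 16 framed MP moves as a sequence of the primary framed MP move, the framed pure sliding moves, the framed H-move, and their inverses. Since Corollary \ref{thZ2} was derived by projecting Theorem \ref{thZ} modulo $2$ on the edge weights, the same derivation applies verbatim in the spin setting, where the cochain $y \in C^1(\Gamma;\mathbb{Z}_2)$ satisfies $\partial y = (c_P)_2$ and the $\mathbb{Z}_2$-weights behave identically under the moves. The framed $0$-$2$ move is in turn a particular instance of the framed pure sliding moves (as already noted for the integral case: the integral $0$-$2$ move is an integral pure sliding move), so it is absorbed into the framed pure sliding moves as well.

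At this stage, one must verify that these substitutions respect the spin structure. This is precisely the content of Proposition \ref{IPS move and spin}, which ensures that the framed pure sliding moves preserve equivalence classes of spin $3$-manifolds. The primary framed MP move, the framed H-move, and the framed CP move preserve the spin structure by the general Benedetti--Petronio framework used in Proposition \ref{BPspin}. Hence, the generating set in Proposition \ref{BPspin} can be replaced by the primary framed MP move, the framed pure sliding moves, the framed H-move, and the framed CP move, giving Corollary \ref{cospin}.

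The main conceptual obstacle is the verification that Corollary \ref{thZ2} transfers to the spin context without modification. The subtlety is that the global condition (PS-I)--(PS-IV) controlling each framed pure sliding move is intrinsic to the underlying normal o-graph (it depends on the arrangement of $2$-cells, not on the cochain), while the $\mathbb{Z}_2$-weight bookkeeping from Theorem \ref{thZ} is linear and commutes with reduction modulo $2$; these observations together make the transfer routine. The remainder of the argument is then a mechanical assembly of Propositions \ref{BPspin} and \ref{IPS move and spin} with Corollary \ref{thZ2}, exactly parallel to the derivation of Corollary \ref{cofram} from Proposition \ref{BPfram}, Proposition \ref{IPS move and framing}, and Corollary \ref{thZ2}.
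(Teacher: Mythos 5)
Your proposal is correct and follows essentially the same route as the paper, which derives Corollary \ref{cospin} immediately from Corollary \ref{thZ2}, Proposition \ref{BPspin}, and Proposition \ref{IPS move and spin}; your write-up simply spells out the substitution of generators and the absorption of the framed $0$-$2$ move into the framed pure sliding moves, which the paper leaves implicit.
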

\begin{proof}
The assertion follows immediately from  Corollary \ref{thZ2}, Propositions \ref{BPspin} and  \ref{IPS move and spin}.
\end{proof}

\begin{appendices}

\section{Symmetry of moves}\label{Section;symmetry}
In this section, we investigate the symmetries of the MP moves. In what follows, we assume, for simplicity, that the normal o-graph $\Gamma$ is closed, although the argument applies to normal o-graphs in general. 
Let $P(\Gamma)$ be the branched polyhedron represented by $\Gamma$, as explained in Section \ref{Normal o-graph}.
Recall from Section \ref{Closed normal o-graph} the equivalent class of a combed  $3$-manifold $(\widehat M(\Gamma), \widehat v(\Gamma))$ obtained from the branched spine $P(\Gamma)$. For brevity, we will denote it by $(M(\Gamma), v(\Gamma))$, omitting the hats. We may occasionally use the same notation for a representative of the equivalent class.

Let $\Gamma^{*}$, $\overrightarrow{\Gamma}$, and $\Gamma^{\times}$ be the normal o-graphs obtained from a normal o-graph $\Gamma$ by reflecting its diagram, reversing its orientation, and changing the sign of every crossing, respectively. 
Note that each of these operations is involutive. 

For a combed $3$-manifold $(M(\Gamma),v(\Gamma))$, let $(M^{\text{op}}(\Gamma),v^{\text{op}}(\Gamma))$ be the equivalent class of combed $3$-manifolds defined by the image of $(M(\Gamma),v(\Gamma))$ under an orientation-reversing diffeomorphism.
For a combing $v$ of a $3$-manifold we denote by $-v$ the combing obtained by multiplying $(-1)$ to ${v}$ at each point. 

\begin{prop}
We have $({M}(\overrightarrow {\Gamma^*}),{v}(\overrightarrow {\Gamma^*})) =(M^{\text{op}}(\Gamma),v^{\text{op}}(\Gamma))$ and 
$(M(\Gamma^{\times}), v(\Gamma^{\times}))=(M^{\text{op}}(\Gamma),-v^{\text{op}}(\Gamma))$.
\end{prop}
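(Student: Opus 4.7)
The plan is to identify each of the three basic operations on normal o-graphs with an explicit geometric transformation of the branched polyhedron $P(\Gamma)\subset\mathbb{R}^3$ and then read off its effect on the oriented $3$-manifold $M(\Gamma)$ and the combing $v(\Gamma)$. I would interpret $\Gamma\mapsto\Gamma^*$ as the image of $P(\Gamma)$ under a reflection $\rho$ of $\mathbb{R}^3$ whose restriction to the diagram plane is a planar reflection; this produces both the mirror of the diagram and, because the induced orientation of the diagram plane reverses, a sign change at every crossing. I would interpret $\Gamma\mapsto\Gamma^{\times}$ as the reflection $\sigma$ of $\mathbb{R}^3$ in the diagram plane itself, which fixes the planar layout but swaps the type $+$ and type $-$ branchings at each true vertex, matching the correspondence of Figure~\ref{fig:BStoBP}. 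Finally, $\Gamma\mapsto\overrightarrow{\Gamma}$ does not move $P$ in $\mathbb{R}^3$ at all; it merely reverses the canonical orientations of the $1$-cells, and hence the orientations of the $2$-cells, of $P$, leaving the underlying oriented standard polyhedron, and therefore the oriented manifold $M(P)$, unchanged.

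Because both $\rho$ and $\sigma$ are orientation-reversing diffeomorphisms of $\mathbb{R}^3$, the restrictions $\rho\co M(\Gamma)\to M(\Gamma^*)$ and $\sigma\co M(\Gamma)\to M(\Gamma^{\times})$ realize the identifications $M(\Gamma^*)\cong M^{\mathrm{op}}(\Gamma)$ and $M(\Gamma^{\times})\cong M^{\mathrm{op}}(\Gamma)$. Since $v(P)$ is defined from the branching through a right-hand screw rule and any reflection of $\mathbb{R}^3$ converts the right-hand rule into the left-hand rule, the combings $v(\Gamma^*)$ and $v(\Gamma^{\times})$ satisfy $v(\Gamma^*)=-\rho_* v(\Gamma)$ and $v(\Gamma^{\times})=-\sigma_* v(\Gamma)$. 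Therefore $(M(\Gamma^*),v(\Gamma^*))=(M(\Gamma^{\times}),v(\Gamma^{\times}))=(M^{\mathrm{op}}(\Gamma),-v^{\mathrm{op}}(\Gamma))$, which is already the second equation of the proposition. For $\overrightarrow{}$, reversing the $2$-cell orientations negates the right-hand screw vector on the same underlying manifold, giving $(M(\overrightarrow{\Gamma}),v(\overrightarrow{\Gamma}))=(M(\Gamma),-v(\Gamma))$. Applying this observation to $\Gamma^*$ flips $v(\Gamma^*)$ once more to yield $(M(\overrightarrow{\Gamma^*}),v(\overrightarrow{\Gamma^*}))=(M^{\mathrm{op}}(\Gamma),v^{\mathrm{op}}(\Gamma))$, which is the first equation.

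The main obstacle is justifying the two geometric identifications $\Gamma^*\leftrightarrow\rho$ and $\Gamma^{\times}\leftrightarrow\sigma$, which requires matching the local pictures of branched polyhedra around true vertices of both types (Figures~\ref{fig:BStoBP} and \ref{fig:branched vertex}) with the images of those pictures under the two reflections and verifying that the resulting diagrams recover exactly the combinatorial operations defining $\Gamma^*$ and $\Gamma^{\times}$. Once these identifications are granted, the rest is a straightforward application of the right-hand rule together with the observation that $\overrightarrow{}$ affects only the branching layer of data, not the underlying oriented polyhedron.
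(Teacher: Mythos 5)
Your strategy --- computing the effect of each of the three involutions separately and then composing --- is genuinely different from the paper's: the paper only analyses ${}^{\times}$ (realized by reflecting each butterfly in the $xy$-plane) and the full composite $(\overrightarrow{\Gamma^{*}})^{\times}$ (realized by an orientation-preserving $\pi$-rotation of each butterfly, which reverses the combing), and obtains the statement for $\overrightarrow{\Gamma^{*}}$ by composing these two. Your treatment of ${}^{\times}$ via $\sigma$ coincides with the paper's. But your route requires the effect of ${}^{*}$ alone and of $\vec{\ }$ alone, neither of which the paper establishes, and it is exactly there that the justification breaks down.

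Concretely: (i) your description of $\rho$ is internally inconsistent --- if the vertical reflection really produced ``the mirror of the diagram and a sign change at every crossing,'' it would realize $(\Gamma^{*})^{\times}$, not $\Gamma^{*}$; running your computation on $(\Gamma^{*})^{\times}$ and cancelling the ${}^{\times}$ against your own result for $\sigma$ would give $(M(\Gamma^{*}),v(\Gamma^{*}))=(M(\Gamma),v(\Gamma))$, hence $(M(\overrightarrow{\Gamma^{*}}),v(\overrightarrow{\Gamma^{*}}))=(M(\Gamma),-v(\Gamma))$, contradicting the proposition. (ii) What $\rho$ actually realizes is $\overrightarrow{\Gamma^{*}}$: it turns the counterclockwise orientations of the horizontal $2$-cells into clockwise ones, so by the convention the paper invokes for the $\pi$-rotation (Figure \ref{fig:INV}) the edge orientations of the diagram are reversed, while the crossing signs are unchanged ($\rho$ is the composite of $\sigma$ with the $\pi$-rotation, each of which flips every sign). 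Moreover $\rho(P)$ is again in standard position, so its combing points in the $+z$-direction, which is $+\rho_{*}$ of the original combing rather than $-\rho_{*}$ of it; your blanket rule ``any reflection negates the combing relative to the pushforward'' works for $\sigma$ only because $\sigma$ reverses the transverse $z$-direction, and it fails for $\rho$. (iii) For $\vec{\ }$ you assert that reversing all $2$-cell orientations in place leaves the crossing signs untouched; this requires an explicit check against the local models, since the paper's $\pi$-rotation --- which also reverses all $2$-cell orientations --- does flip every crossing sign. Your final formulas are correct, and in fact the corrected statement in (ii) proves the first equation of the proposition in a single step, but as written your argument reaches the right answer only because the errors in (i) and (ii) cancel.
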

\begin{proof}
We first consider $\Gamma^{\times}$. Recall from Section \ref{Normal o-graph} the construction of the branched spine $P(\Gamma)$ from a normal o-graph $\Gamma$, and recall from Figure \ref{fig:BStoBP} the correspondence between a crossing of  $\Gamma$ and a neighborhood (butterfly) of the corresponding true vertex in $P(\Gamma)$. When we change the sign of a crossing, the corresponding butterfly is reflected in $\mathbb{R}^3$ by the $xy$-plane on which the four horizontal $2$-cells in the butterfly are placed, see Figure \ref{fig:SC}.
\begin{figure}[H]
    \centering
    
    \includegraphics[scale=1]{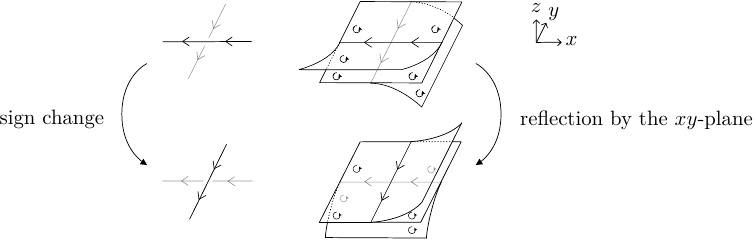}
    \caption{Sign change of normal o-graph and corresponding butterfly.}
    \label{fig:SC}
\end{figure}
\noindent This correspondence induces the orientation-reversing diffeomorphism $f\co (x,y,z) \mapsto (x,y,-z)$ between neighborhoods of  butterflies in ${M}(\Gamma)$ and ${M}(\Gamma^{\times})$, and we can extend it to whole $3$-manifolds.  Note that the push-foward $f_*(v(\Gamma))$ of the combing in ${M}(\Gamma)$ goes to $-z$ direction which is opposite to the combing of ${M}(\Gamma^{\times})$ that goes to $z$ direction.
Consequently we have $({M}(\Gamma^{\times}), v(\Gamma^{\times}))=(f(M(\Gamma)),-f_*(v(\Gamma))=(M^{\text{op}}(\Gamma),-v^{\text{op}}(\Gamma))$.

We then consider $(\overrightarrow {\Gamma^*})^{\times}$, performing all $3$ involutions. When we perform them around a crossing of a $\Gamma$, the corresponding butterfly in $P(\Gamma)$ is flipped, i.e.,  rotated $\pi$ degrees around the line in the $xy$-plane which we use when we reflect the diagram of the crossing. 
Here, the orientation of edges of the normal o-graph is reversed because the orientation of  $2$-cells is reversed after we flip it, according to the assumption for constructing the diagram that the orientation of the $2$-cell should be compatible with that of the $xy$-plane, see Figure \ref{fig:INV}.
\begin{figure}[H]
    \centering
    
    \includegraphics[scale=1]{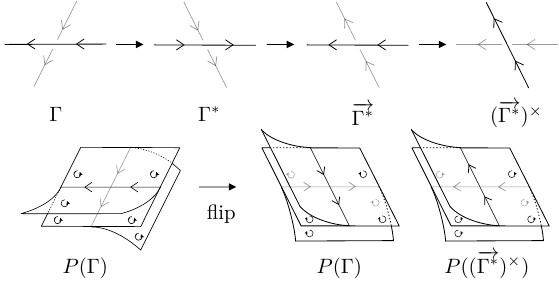}
    \caption{Performing all involutions on normal o-graph and resulting butterfly.}
    \label{fig:INV}
\end{figure}
\noindent This flip is extended to an orientation-preserving diffeomorphism of whole $3$-manifolds, and the combing is reversed because the orientation of the $2$-cell is reversed. Thus, we have $({M}((\overrightarrow {\Gamma^*})^{\times}),v((\overrightarrow {\Gamma^*})^{\times}))=(M(\Gamma),-v(\Gamma))$.

For $\overrightarrow {\Gamma^*}=\big((\overrightarrow {\Gamma^*})^{\times}\big)^{\times}$, the above two cases imply that $({M}(\overrightarrow {\Gamma^*}),{v}(\overrightarrow {\Gamma^*})) =(M^{\text{op}}(\Gamma),v^{\text{op}}(\Gamma))$, see Figure \ref{fig:Refrev}, which completes the proof. 
\begin{figure}[H]
    \centering
    
    \includegraphics[scale=1]{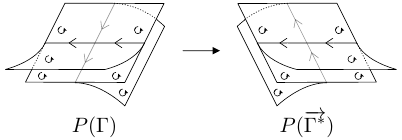}
    \caption{Butterflies in $P(\Gamma)$ and $P(\protect\overrightarrow{\Gamma^*})$.}
    \label{fig:Refrev}
\end{figure}
\end{proof}

\if0
Consequently, for the normal o-graph $\overrightarrow {\Gamma^*}$ obtained from $\Gamma$ by reflecting it and then reversing the orientation. Then the corresponding branched polyhedron $P(\overrightarrow {\Gamma^*})$ is homeomorphic to $P$ but the orientation of each $2$-cell is reversed, thus the combing is reversed. 

The homeomorphism (the identity) of  $P(\Gamma)$ and $P(\overrightarrow {\Gamma^*})$ induces an orientation reversing diffeomorphism between $({M}(\Gamma),{v}(\Gamma))$ and the combed $3$-manifold $({M}(\overrightarrow {\Gamma^*}),{v}(\overrightarrow {\Gamma^*}))$

The $\Gamma ^{\times}$ obtained from $\Gamma$ by reflecting it and then reversing the orientation. Then the corresponding branched polyhedron $P(\overrightarrow {\Gamma^*})$ is homeomorphic to $P$ but the orientation of each $2$-cell is reversed. The homeomorphism (the identity) of  $P(\Gamma)$ and $P(\overrightarrow {\Gamma^*})$ induces an orientation-reversing diffeomorphism between $({M}(\Gamma),{v}(\Gamma))$ and the combed $3$-manifold $({M}(\overrightarrow {\Gamma^*}),{v}(\overrightarrow {\Gamma^*}))$
With the same assumption, 
$\Gamma ^{\times}$ represents the equivalent class of the combed $3$-manifold $({M}(\overrightarrow {\Gamma^*}),-{v}(\overrightarrow {\Gamma^*}))$, where $-{v}(\overrightarrow {\Gamma^*})$  is the combing obtained by multiplying $(-1)$ to ${v}(\overrightarrow {\Gamma^*})$ at each point. 
\fi

We study the behavior of the 16 MP moves and pure sliding moves under the involutions  $\vec{\ }\circ {}^*$ (reflect and reverse the orientation)  and ${}^{\times}$ (change the sign of each crossing). Tables \ref{Symmetries of MP moves} and \ref{Symmetries of pure sliding moves} show the results of moves by the involutions. For example, if a normal o-graph $\Gamma_2$ is obtained from another normal o-graph $\Gamma_1$ by A1 on vertices $v_1, v_2$ of $\Gamma_1$, then $\overrightarrow {\Gamma_2^*}$ is obtained from $\overrightarrow {\Gamma_1^*}$ by A3 on the image of vertices $v_1, v_2$ by $\vec{\ }\circ {}^*$.

\begin{table}[H]
    \centering

\begin{tabular}{|c|c|c|c|c|c|c|c|c|c|c|c|c|c|c|c|c|}\hline
   & A1 & A2 & A3 & A4 & B1 & B2 & B3 & B4 & C1 & C2 & C3 & C4 & D1 & D2 & D3 & D4  \\ \hline
reflect ${}^*$ \& reverse $\vec{\ }$ & A3 & A4 & \textcolor[gray]{0.6}{A1} & \textcolor[gray]{0.6}{A2} & B3 & B4 & \textcolor[gray]{0.6}{B1} & \textcolor[gray]{0.6}{B2} & D1 & D2 & D3 & D4 &  \textcolor[gray]{0.6}{C1} & \textcolor[gray]{0.6}{C2} & \textcolor[gray]{0.6}{C3} &\textcolor[gray]{0.6}{C4}  \\ \hline
sign change ${}^{\times}$& B1 & B2 & B3 & B4 & \textcolor[gray]{0.6}{A1} & \textcolor[gray]{0.6}{A2} & \textcolor[gray]{0.6}{A3} & \textcolor[gray]{0.6}{A4} & D4 & D2 & D3 & D1 & \textcolor[gray]{0.6}{C4} & \textcolor[gray]{0.6}{C2} & \textcolor[gray]{0.6}{C3} & \textcolor[gray]{0.6}{C1} \\ \hline
\end{tabular}

    \caption{Symmetries of MP moves.}
    \label{Symmetries of MP moves}
\end{table}

\begin{table}[H]
    \centering

\begin{tabular}{|c|c|c|c|c|}\hline
 & ps-I & ps-II & ps-III & ps-IV \\ \hline
reflect ${}^*$ \& reverse $\vec{\ }$ & ps-I &ps-II &ps-III & ps-IV \\ \hline
sign change ${}^{\times}$ &ps-II & \textcolor[gray]{0.6}{ps-I} & ps-III& ps-IV\\ \hline
\end{tabular}

    \caption{Symmetries of pure sliding moves.}
    \label{Symmetries of pure sliding moves}
\end{table}

By using the above argument, we can simplify the proof of  Lemma \ref{Lem1}.
One can check that the right diagram in Figure \ref{Relations among MP moves}
is equal to the image of the left diagram by $\vec{\ }\circ {}^*$,
and also the image by ${}^{\times}$.

In  Proposition \ref{localth1},  we can chose a set of 18 local pure sliding moves
 consisting of 9 local pure sliding moves and their image by $\vec{\ }\circ {}^*$, or their image by ${}^{\times}$, which is for example the $9$ arrow in the left diagram in Figure \ref{Relations among MP moves2}.

\section{Cyclic moves}\label{cyclic}
In this section we study cyclic moves as we introduced in the introduction.
Note that the MP moves A2, A4, B2, B4, C3, D3 are cyclic, and the others are non-cyclic. 
For the pure sliding moves, ps-I and ps-II are non-cyclic, and ps-III and ps-IV are cyclic.
The inverse of a (non-) cyclic move will also be referred to as a (non) cyclic move.

\begin{prop}\label{cyclicprop}
Each non-cyclic MP move is derived as sequence of the primary MP move (which is non-cyclic), the non-cyclic pure sliding moves, and their inverses.
Any cyclic MP move cannot be derived as a sequence of non-cyclic moves.
Especially, each cyclic MP move must include a cyclic pure sliding moves in a sequence of the primary MP move, the pure sliding moves and their inverses representing it. 
\end{prop}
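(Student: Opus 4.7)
The plan is to address the three assertions in order, with the second and third following from the construction of a parity-valued obstruction.

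For the first assertion, I would revisit the derivations assembled in the proof of Lemma \ref{Lem1} (Figures \ref{MPPS1} and \ref{Proof of Main theorem}) and inspect the arrow-labels in Figure \ref{Relations among MP moves}. The point to check is that the subdiagram obtained by keeping only the non-cyclic MP moves (A1, A3, B1, B3, C1, C2, C4, D1, D2, D4) and retaining only the arrows labelled by ps-I or ps-II is already connected, with D2 joined to every other non-cyclic MP move. Reading off the surviving paths gives, for each non-cyclic MP move, an explicit sequence of the primary MP move and non-cyclic pure sliding moves (and their inverses) realising it.

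For the second and third assertions I would construct a $\mathbb{Z}/2$-valued invariant $\nu$ of closed normal o-graphs that is preserved by every non-cyclic move and flipped by every cyclic move. A natural candidate, motivated by the very definition of ``cyclic'' and by the solid-dot count of Section \ref{Eulercochain}, is
\[
\nu(\Gamma) \;=\; \#\bigl\{\,\Delta_i \in D(P(\Gamma)) \;:\; \partial \Delta_i \text{ is oriented cyclically}\,\bigr\} \pmod 2,
\]
where the boundary of a 2-cell is said to be \emph{oriented cyclically} when the canonical orientations induced by the branching on all edges of $\partial\Delta_i$ agree with a single orientation of the boundary circle, equivalently when the solid-dot count $n_i$ on $\partial\Delta_i$ (as in Figure \ref{fig:c_p def}) vanishes. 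The core of the argument is then a case-by-case check of the 16 MP moves (Figures \ref{The MP moves of type A}--\ref{The MP moves of type D}) and the 4 pure sliding moves (Figure \ref{the pure sliding move}), using the circuit data from Method B (Section \ref{Section;CPS}) to determine precisely which 2-cells before and after the move are oriented cyclically.

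Once $\nu$ has the claimed invariance, the proposition follows. Since the primary MP move is non-cyclic and the non-cyclic pure sliding moves preserve $\nu$, any sequence built from these and their inverses preserves $\nu$, whereas each cyclic MP move flips $\nu$; this gives the second assertion. For the third, any decomposition of a cyclic MP move as a sequence of the primary MP move and pure sliding moves must flip $\nu$, and since only cyclic pure sliding moves among the available generators flip $\nu$, at least one cyclic pure sliding move must appear. The main obstacle is the case analysis establishing the parity behaviour of $\nu$: although essentially local around each move, one must correctly track the global attachment data of the affected 2-cells (which is precisely why the global conditions (PS-I)--(PS-IV) enter the definition of the pure sliding moves on normal o-graphs), and treating all 16 MP patterns together with their branching orientations is the bulk of the work.
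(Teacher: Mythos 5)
Your treatment of the first assertion is essentially the paper's: the paper likewise proves it by highlighting the cyclic moves in the diagram of Figure \ref{Relations among MP moves} (Figure \ref{cyclic diagram}) and observing that every non-cyclic MP move is connected to $\mathrm{D2}$ by arrows labelled only with ps-I and ps-II; your list of the ten non-cyclic MP moves is also correct.

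For the second and third assertions there is a genuine gap. Your entire argument rests on the unproved claim that $\nu$ is preserved by every non-cyclic move and flipped by every cyclic one, and this claim is doubtful, not merely laborious to check. The problem is that the moves do not only create or delete one new $2$-cell (the triangle or the bigon): they also reroute the boundary circuits of the pre-existing $2$-cells incident to the edges involved, and in doing so they can change the solid-dot counts $n_j$ of those cells by $\pm 2$. Direct evidence for this is built into the paper: the integral pure sliding moves (Figure \ref{the integral pure sliding move}) and the integral cyclic MP moves (Figure \ref{fig:MP-move}) carry non-trivial weight corrections, including the globally determined quantity $p$, precisely because the Euler cochain values $c_P(\Delta_j)=-(1-n_j/2)$ of pre-existing cells change and the condition $\delta x=c_P$ must be restored (see Remark \ref{nonlocal}). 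Once some $n_j$ can jump from $0$ to $2$ or back, whether the status ``$\partial\Delta_j$ is oriented cyclically'' flips depends on the \emph{global} dot count of $\Delta_j$ elsewhere in the spine, so the change of $\nu$ under a fixed local move is not determined by the local picture at all; in particular $\nu$ cannot have the uniform parity behaviour you need. So the ``case-by-case check'' you defer is not just the bulk of the work — it is where the argument breaks.

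The paper takes a completely different route here: it invokes the quantum invariant $Z$ of \cite{MST2} for a non-involutory Hopf algebra, applied to a normal o-graph with all weights zero (denoted $Z^0$), and uses the facts, established in \cite{MST1, MST2}, that $Z^0$ is unchanged by every non-cyclic move but fails to be invariant under cyclic moves unless $S^2=\mathrm{id}$. A decomposition of a cyclic MP move into non-cyclic moves would force $Z^0$-invariance, a contradiction; the third assertion then follows since the primary MP move is non-cyclic. If you want a proof independent of the quantum invariant, you would need to find an obstruction that is genuinely local to the moves — your $\nu$ is not — or verify invariance of some corrected count; as it stands, the second and third assertions are not established by your proposal.
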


\begin{proof}
Refer to Figure \ref{cyclic diagram}, where we highlight the cyclic moves in the diagram from Figure \ref{Relations among MP moves}, depicting the relationships of the MP moves.
(Actually, the right diagram is unnecessary since it is the image of the involutions $\vec{\ }\circ {}^*$ or  ${}^{\times}$, as explained in the previous section, and these involution keep the cyclic property.)
\begin{figure}[H]
    \centering
    \includegraphics[scale=0.9]{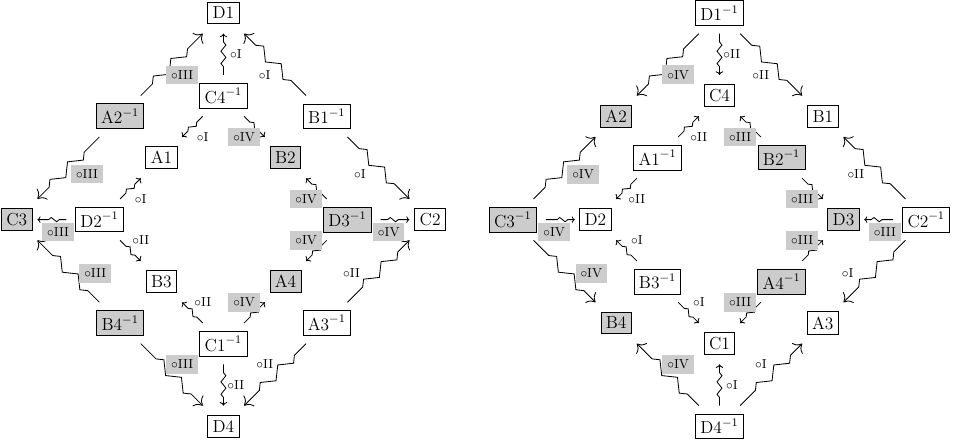}
    \caption{Cyclic MP moves and cyclic pure sliding moves in diagram in Figure \ref{Relations among MP moves}.}
    \label{cyclic diagram}
\end{figure}
Remember that the primary MP move is equivalent to the MP move D2. Figure \ref{cyclic diagram} illustrates that each non-cyclic MP move is a sequence of the primary MP move, the non-cyclic pure sliding moves, and their inverses. 

By using  the invariant $Z$ defined for integral normal o-graphs in \cite{MST2} for non-involutory Hopf algebras, we can show that any cyclic MP move cannot be derived as a sequence of non-cyclic moves. 
To be more precise, we apply $Z$ to a normal o-graph $\Gamma$ treating it as an integral normal o-graph $\Gamma(0)$ with a zero weight on each edge. To avoid confusion we denote the resulting map on normal o-graphs by $Z^0$, and we compare the value $Z^0(\Gamma)=Z(\Gamma(0))$ and $Z^0(\Gamma')=Z(\Gamma'(0))$ when $\Gamma$ and $\Gamma'$ are related by a (non-) cyclic move. Here, strictly speaking, we consider \textit{normal o-tangles} treating $Z$ as a functor from the category of integral normal o-tangles (cf. \cite[Remark 3.1]{MST2}), to compare the values of $Z^0$ locally around the move. 
While the invariant $Z^0$ remains unchanged under non-cyclic moves, it is not invariant under cyclic moves for non-involutory Hopf algebras. In fact, it is not difficult to observe that $Z^0$ is invariant under a cyclic move if and only if the Hopf algebra is involutory, see the proofs of \cite[Theorem 5.1]{MST1} and \cite[Theorem 3.3]{MST2}, see also Remark \ref{remcyclic} below. 
If a cyclic move were a sequence of non-cyclic moves, it would imply that the cyclic move does not change the value of $Z^0$, leading to a contradiction.
\end{proof}

We also classify integral moves as either cyclic or non-cyclic based on the underlying move for normal o-graphs.
\begin{cor}
Each integral non-cyclic MP move is derived as sequence of the integral primary MP move, the integral non-cyclic pure sliding moves, and their inverses.
Any integral cyclic MP move cannot be derived as a sequence of integral non-cyclic moves.
\end{cor}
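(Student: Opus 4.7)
The corollary is the integral refinement of Proposition \ref{cyclicprop}, and my plan is to mirror its two-part proof after lifting the cyclic/non-cyclic classification from normal o-graphs to integral normal o-graphs: an integral move is declared cyclic or non-cyclic exactly when its underlying move is (so the integral primary MP move, integral ps-I, ps-II are non-cyclic, while integral ps-III, ps-IV and the six integral MP moves A2, A4, B2, B4, C3, D3 are cyclic).

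\textbf{First claim.} Given an integral non-cyclic MP move $(\Gamma,x)\to(\Gamma',x')$, I would start from the derivation of the underlying move $\Gamma\to\Gamma'$ provided by the first part of Proposition \ref{cyclicprop} as a sequence of the primary MP move and non-cyclic pure sliding moves and their inverses. Following the proof scheme indicated for Theorem \ref{thZ}, I would then propagate the integer weights through each intermediate diagram, updating them at every step according to the integral primary MP move and Figure \ref{the integral pure sliding move}. Since every underlying move in the Proposition \ref{cyclicprop} sequence is non-cyclic, each lift is an integral non-cyclic move. Any residual mismatch of weights between the terminal diagram of the lift and the prescribed target $(\Gamma',x')$ can be absorbed by a sequence of integral H-moves localized on edges outside the sliding and MP regions, exactly as done in the proof of Theorem \ref{thZ}; these H-moves are naturally non-cyclic auxiliaries.

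\textbf{Second claim.} For the non-derivability statement, I would apply the invariant $Z$ of \cite{MST2} directly to integral normal o-graphs, rather than its zero-weight restriction $Z^0$ used in Proposition \ref{cyclicprop}. The functorial construction of $Z$ shows that the integral primary MP move corresponds to the pentagon relation for the canonical element of the Heisenberg double, and the integral non-cyclic pure sliding moves and the integral H-move correspond to algebraic identities that hold for an arbitrary Hopf algebra $H$; by contrast, invariance under a cyclic pure sliding move or a cyclic integral MP move forces the relation $S^2=\id$ on the antipode of $H$, as is evident from the computations in the proofs of \cite[Theorem 5.1]{MST1} and \cite[Theorem 3.3]{MST2}. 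Fix a non-involutory Hopf algebra $H$ (for example a Taft algebra) for which $Z$ takes distinct values on the two sides of some integral cyclic MP move. If that move were expressible as a sequence of integral non-cyclic moves, $Z$ would be preserved throughout, contradicting the choice of $H$.

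\textbf{Main obstacle.} The delicate point is the weight bookkeeping in the first claim: one must verify that the lift of the non-cyclic derivation in Proposition \ref{cyclicprop} never forces the introduction of a cyclic integral pure sliding move in order to match weights, and that the integral H-moves inserted for reconciliation do not implicitly smuggle cyclicity. Equivalently, one should argue that at each stage of the lift the weight-update rules depend only on the local data available to the non-cyclic move being applied. Once this localization is established, both halves of the argument reduce cleanly to the corresponding statements for normal o-graphs together with the algebraic analysis of $Z$.
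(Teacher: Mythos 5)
There is a genuine error in your second claim. The invariant $Z$ of \cite{MST2}, applied to integral normal o-graphs, is \emph{invariant under the integral cyclic MP moves}: the non-trivial integer weights appearing in Figure \ref{fig:MP-move} were introduced precisely to reconcile the failure of $Z^0$ under cyclic moves, so that $Z$ becomes an invariant of framed $3$-manifolds (see Remark \ref{remcyclic}). Consequently there is no Hopf algebra, involutory or not, for which ``$Z$ takes distinct values on the two sides of some integral cyclic MP move,'' and your proposed obstruction vanishes identically. The relation $S^2=\id$ is forced by invariance of $Z^0$ (the zero-weight specialization) under the \emph{unweighted} cyclic moves, not by invariance of $Z$ under the weighted ones. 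The paper's argument avoids this trap by passing to the underlying normal o-graphs: if an integral cyclic MP move were a sequence of integral non-cyclic moves, then forgetting all weights would exhibit the underlying cyclic MP move as a sequence of non-cyclic moves on normal o-graphs, contradicting the second part of Proposition \ref{cyclicprop} (which is where $Z^0$ and non-involutivity are actually used). Your argument can be repaired by making exactly this reduction instead of evaluating $Z$ on the weighted diagrams.

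Your first claim follows the paper's intended route (attach integer weights to the non-cyclic derivation from Proposition \ref{cyclicprop}), but note that the statement of the corollary does \emph{not} list the integral H-move among the allowed moves, so absorbing ``residual weight mismatches'' by H-moves would prove a weaker statement than the one asserted. The intended point, consistent with Remark \ref{remcyclic}, is that integral non-cyclic moves carry everywhere-zero weights, so the lifted sequence already terminates at the correct weighted diagram and no reconciliation step is needed; this is the localization you flag as the ``main obstacle,'' and it should be checked rather than papered over with H-moves.
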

\begin{proof}
The proof for the former part is similar to that of the former part of Proposition \ref{cyclicprop}, once integer weights are attached. 
For the latter part, we can  extend the arguments for the latter part of Proposition \ref{cyclicprop} to integral normal o-graphs and integral moves  by considering the underlying normal o-graphs.
\end{proof}

\begin{rem}\label{remcyclic}
In \cite{MST1}, we introduced an invariant $Z$ of combed $3$-manifolds using normal o-graphs and involutory Hopf algebras, which is essentially equivalent to $Z^0$ discussed in the proof of Proposition \ref{cyclicprop}. To avoid confusion with $Z$ introduced in \cite{MST2}, we will denote the former as $Z^0$ in the following discussion.
In \cite{MST2}, we extended $Z^0$ to integral normal o-graphs and any Hopf algebra to define $Z$. In this extended formulation, the integer weight $i$ on an edge of an integral normal o-graph affects as $S^{2i}$ with the antipode $S$ of the Hopf algebra. Thus, when we work with an involutory Hopf algebra, the invariant $Z$ for integral normal o-graphs reduces to $Z^0$ for normal o-graphs by applying it to underlying normal o-graphs, resulting in an invariant of combed $3$-manifolds. If the involutory Hopf algebra is in addition unimodular and counimodular, then $Z$ becomes invariant under the CP move and gives a topological invariant of $3$-manifolds.

For non-involutory Hopf algebras, $Z^0$ fails to be an invariant for combed $3$-manifolds because it is not invariant under cyclic moves, although it remains invariant under non-cyclic moves. To address this, in \cite{MST2}, we introduced integral normal o-graphs and extended $Z$ from $Z^0$ ensuring its invariance under integral cyclic moves. This extension results in an invariant for framed $3$-manifolds. 

Note that integral cyclic moves have non-trivial integer weights as illustrated in Figure \ref{fig:MP-move}, which help reconcile the disparities  of the values of $Z^0$ before and after cyclic moves. On the other hand, integral non-cyclic moves have everywhere zero weights and thus $Z$ is compatible with $Z^0$, i.e., $Z((\Gamma,x))=Z((\Gamma,0))=Z^0(\Gamma)$ locally around non-cyclic moves, tearing $Z$ as a functor from  integral normal o-tangles as mentioned in the proof of Proposition \ref{cyclicprop}. 

In the above argument, the obstruction preventing $Z^0$ for a non-involutory Hopf algebra from becoming an invariant of combed $3$-manifolds (or topological $3$-manifolds) is the invariance under cyclic moves.  From the viewpoint of Proposition \ref{cyclicprop}, the obstruction lies essentially in the cyclic pure sliding moves, specifically ps-III and ps-IV. We can strengthen this statement further. 
As depicted in Figure \ref{cyclic diagram}, each cyclic MP move is a sequence consisting of a non-cyclic MP move  (or its inverse) and a cyclic pure sliding move (or its inverse).
Consequently, $Z^0$ is invariant under a cyclic MP move if and only if $Z^0$ is invariant under cyclic pure sliding moves.
A straight forward calculation shows that $Z^0$ is invariant under ps-III (or equivalently ps-IV) if and only if the Hopf algebra is involutory. 
To summarize, $Z^0$ is invariant under cyclic moves if and only if the Hopf algebra is involutory. 

In other words, the invariant $Z$ for a non-involutory Hopf algebra captures framings of $3$-manifolds (at least locally; if not, the Hopf algebra should be involutory), and algebraic equations in the non-involutory Hopf algebra coming from the geometrical equivalence relation (homotopy) of framings via $Z$ are reduced to algebraic properties of the square of the antipode. Specifically, if $S^2=1$, the equations become trivial.  Further study of the relationship between framings  and the properties of $S^2$ would be interesting.
\end{rem}

\end{appendices}

\begin{bibdiv}
\begin{biblist}

\bib{AK}{article}{
   author={Andersen, J\o rgen Ellegaard},
   author={Kashaev, Rinat},
   title={A TQFT from quantum Teichm\"uller theory},
   journal={Comm. Math. Phys.},
   volume={330},
   date={2014},
   number={3},
   pages={887--934},
   issn={0010-3616},
}

\bib{BS0}{article}{
   author={Baseilhac, St\'ephane},
   author={Benedetti, Riccardo},
   title={Quantum hyperbolic invariants of $3$-manifolds with ${\rm
   PSL}(2,\Bbb C)$-characters},
   journal={Topology},
   volume={43},
   date={2004},
   number={6},
   pages={1373--1423},
   issn={0040-9383},
}

\bib{BS1}{article}{
   author={Baseilhac, St\'ephane},
   author={Benedetti, Riccardo},
   title={Classical and quantum dilogarithmic invariants of flat ${\rm
   PSL}(2,\Bbb C)$-bundles over $3$-manifolds},
   journal={Geom. Topol.},
   volume={9},
   date={2005},
   pages={493--569},
   issn={1465-3060},
}

\bib{BS2}{article}{
   author={Baseilhac, St\'ephane},
   author={Benedetti, Riccardo},
   title={Non ambiguous structures on $3$-manifolds and quantum symmetry
   defects},
   journal={Quantum Topol.},
   volume={8},
   date={2017},
   number={4},
   pages={749--846},
   issn={1663-487X},
}

\bib{BP}{book}{
   author={Benedetti, Riccardo},
   author={Petronio, Carlo},
   title={Branched standard spines of $3$-manifolds},
   series={Lecture Notes in Mathematics},
   volume={1653},
   publisher={Springer-Verlag, Berlin},
   date={1997},
   pages={viii+132},
   isbn={3-540-62627-1},
}

\bib{BP2}{article}{
   author={Benedetti, Riccardo},
   author={Petronio, Carlo},
   title={Combed $3$-manifolds with concave boundary, framed links, and
   pseudo-Legendrian links},
   journal={J. Knot Theory Ramifications},
   volume={10},
   date={2001},
   number={1},
   pages={1--35},
   issn={0218-2165},
}

\bib{C}{article}{
   author={Costantino, Francesco},
   title={A calculus for branched spines of $3$-manifolds},
   journal={Math. Z.},
   volume={251},
   date={2005},
   number={2},
   pages={427--442},
   issn={0025-5874},
}

\bib{CF}{article}{
   author={Fok, V. V.},
   author={Chekhov, L. O.},
   title={Quantum Teichm\"uller spaces},
   language={Russian, with Russian summary},
   journal={Teoret. Mat. Fiz.},
   volume={120},
   date={1999},
   number={3},
   pages={511--528},
   issn={0564-6162},
   translation={
      journal={Theoret. and Math. Phys.},
      volume={120},
      date={1999},
      number={3},
      pages={1245--1259},
      issn={0040-5779},
   },
}

\bib{EndoIshii}{article}{
   author={Endoh, Mariko},
   author={Ishii, Ippei},
   title={A new complexity for $3$-manifolds},
   journal={Japan. J. Math. (N.S.)},
   volume={31},
   date={2005},
   number={1},
   pages={131--156},
   issn={0289-2316},
}

\bib{F}{article}{
   author={Faddeev, L. D.},
   title={Discrete Heisenberg-Weyl group and modular group},
   journal={Lett. Math. Phys.},
   volume={34},
   date={1995},
   number={3},
   pages={249--254},
   issn={0377-9017},
}

\bib{FG}{article}{
   author={Fock, Vladimir},
   author={Goncharov, Alexander},
   title={Moduli spaces of local systems and higher Teichm\"uller theory},
   journal={Publ. Math. Inst. Hautes \'Etudes Sci.},
   number={103},
   date={2006},
   pages={1--211},
   issn={0073-8301},
}

\bib{FG1}{article}{
   author={Fock, V. V.},
   author={Goncharov, A. B.},
   title={The quantum dilogarithm and representations of quantum cluster
   varieties},
   journal={Invent. Math.},
   volume={175},
   date={2009},
   number={2},
   pages={223--286},
   issn={0020-9910},
}

\bib{Ishii}{article}{
   author={Ishii, Ippei},
   title={Moves for flow-spines and topological invariants of $3$-manifolds},
   journal={Tokyo J. Math.},
   volume={15},
   date={1992},
   number={2},
   pages={297--312},
   issn={0387-3870},
}

\bib{Kashaev}{article}{
   author={Kashaev, R. M.},
   title={The Heisenberg double and the pentagon relation},
   journal={Algebra i Analiz},
   volume={8},
   date={1996},
   number={4},
   pages={63--74},
   issn={0234-0852},
   translation={
      journal={St. Petersburg Math. J.},
      volume={8},
      date={1997},
      number={4},
      pages={585--592},
      issn={1061-0022},
   },
}

\bib{K}{article}{
   author={Kashaev, R. M.},
   title={Quantization of Teichm\"uller spaces and the quantum dilogarithm},
   journal={Lett. Math. Phys.},
   volume={43},
   date={1998},
   number={2},
   pages={105--115},
   issn={0377-9017},
}

\bib{KR}{article}{
   author={Kirillov, A. N.},
   author={Reshetikhin, N. Yu.},
   title={Representations of the algebra ${U}_q({\rm sl}(2)),\;q$-orthogonal
   polynomials and invariants of links},
   conference={
      title={Infinite-dimensional Lie algebras and groups},
      address={Luminy-Marseille},
      date={1988},
   },
   }

\bib{Law}{article}{
author={R. J. Lawrence},
   title={A universal link invariant},
   journal={The interface of mathematics and particle physics (Oxford, 1988),  151--156, Inst. Math. Appl. Conf. Ser. New Ser., \textbf{24}, Oxford Univ. Press, New York, 1990.},
}

\bib{Mat}{book}{
   author={Matveev, Sergei},
   title={Algorithmic topology and classification of $3$-manifolds},
   series={Algorithms and Computation in Mathematics},
   volume={9},
   edition={2},
   publisher={Springer, Berlin},
   date={2007},
}

\bib{MST1}{article}{
   author={Mihalache, Serban Matei},
   author={Suzuki, Sakie},
   author={Terashima, Yuji},
   title={The Heisenberg double of involutory Hopf algebras and invariants
   of closed 3-manifolds},
   journal={Algebr. Geom. Topol.},
   volume={24},
   date={2024},
   number={7},
   pages={3669--3691},
   issn={1472-2747},
}

\bib{MST2}{article}{
   author={Mihalache, Serban Matei},
   author={Suzuki, Sakie},
   author={Terashima, Yuji},
   title={Quantum invariants of framed $3$-manifolds based on ideal triangulations},
   note={preprint (2022), arXiv:math.GT/2209.07378},
}

\bib{Oh}{article}{
   author={Ohtsuki, Tomotada},
   title={Colored ribbon Hopf algebras and universal invariants of framed
   links},
   journal={J. Knot Theory Ramifications},
   volume={2},
   date={1993},
   number={2},
   pages={211--232},
   issn={0218-2165},
}

\bib{RT1}{article}{
   author={Reshetikhin, N. Yu.},
   author={Turaev, V. G.},
   title={Ribbon graphs and their invariants derived from quantum groups},
   journal={Comm. Math. Phys.},
   volume={127},
   date={1990},
   number={1},
   pages={1--26},
}

\bib{RT2}{article}{
   author={Reshetikhin, N.},
   author={Turaev, V. G.},
   title={Invariants of $3$-manifolds via link polynomials and quantum groups},
   journal={Invent. Math. 103 (1991), no. 3, 547--597.},
}

\bib{S}{article}{
   author={Suzuki, Sakie},
   title={The universal quantum invariant and colored ideal triangulations},
   journal={Algebr. Geom. Topol.},
   volume={18},
   date={2018},
   number={6},
   pages={3363--3402},
   issn={1472-2747},
}

\bib{S2}{article}{
   author={Suzuki, Sakie},
   title={Quantum invariants based on ideal triangulations},
   journal={RIMS K\^{o}ky\^{u}roku},
   date={2023},
   number={2263},
   pages={48--67},
   issn={1880-2818},

 }
 
 \bib{TV}{article}{
   author={Turaev, V. G.},
   author={Viro, O. Ya.},
   title={State sum invariants of $3$-manifolds and quantum $6j$-symbols},
   journal={Topology},
   volume={31},
   date={1992},
   number={4},
   pages={865--902},
   issn={0040-9383},
}

\bib{W}{article}{
   author={Witten, Edward},
   title={Quantum field theory and the Jones polynomial},
   journal={Comm. Math. Phys.},
   volume={121},
   date={1989},
   number={3},
   pages={351--399},
}

\end{biblist}
\end{bibdiv}

\end{document}